\theoremstyle{plain}
\newtheorem{theorem}{Theorem}[section]
 \newtheorem{corollary}[theorem]{Corollary}
 \newtheorem{lemma}[theorem]{Lemma}
 \newtheorem{proposition}[theorem]{Proposition}
 \theoremstyle{definition}
 \newtheorem{definition}[theorem]{Definition}
 \theoremstyle{remark}
 \newtheorem{remark}[theorem]{Remark}
 \numberwithin{equation}{section}
\def\eps{\varepsilon}
\def\R{{\mathbb R}}
\def\C{{\mathbb C}}
\def\N{{\mathbb N}}
\def\Tend#1#2{\mathop{\longrightarrow}\limits_{#1\rightarrow#2}}
\numberwithin{equation}{section}
\begin{document}
\title[]{Quantum evolution and  sub-Laplacian  operators\\ 
on groups of Heisenberg type}
\author[C. Fermanian]{Clotilde~Fermanian-Kammerer}
\address[C. Fermanian Kammerer]{
Universit\'e Paris Est Cr\'eteil, LAMA, 61, avenue du G\'en\'eral de Gaulle\\
94010 Cr\'eteil Cedex\\ France}
\email{clotilde.fermanian@u-pec.fr}
\author[V. Fischer]{V\'eronique Fischer}\address[V. Fischer]%
{University of Bath, Department of Mathematical Sciences, Bath, BA2 7AY, UK} 
\email{v.c.m.fischer@bath.ac.uk}

\begin{abstract} In this paper we analyze the evolution of the time averaged energy densities associated with a family of solutions to a Schr\"odinger equation on a Lie group of Heisenberg type. We use a semi-classical approach adapted to the stratified structure of the group and describe the semi-classical measures (also called quantum limits) that are associated with this family. This  allows us to prove an Egorov's type Theorem describing the quantum evolution of a pseudodifferential semi-classical operator through the semi-group generated by a sub-Laplacian. 
 \end{abstract}

\subjclass[2010]{58J47, 43A80, 35Q40}
\keywords{
Analysis on nilpotent Lie groups, 
Evolution of solutions to the Schrodinger equation, 
Semi-classical analysis for sub-elliptic operators,
Abstract harmonic analysis and C*-algebra theory. 
}

\maketitle

\tableofcontents

\section{Introduction}\label{intro}

 We consider groups of Heisenberg type, or $H$-type groups $G$, 
 which are a special case of  simply connected Lie groups  stratified of step~$2$ as described more precisely later. 
 As a step~$2$ stratified group, 
  its  Lie algebra~${\mathfrak g}$  is  equipped with a vector space decomposition
$$  
  \displaystyle
  {\mathfrak g}=  \mathfrak v \oplus \mathfrak z \, ,
  $$
  such that $[{\mathfrak v},{\mathfrak v}]= {\mathfrak z}\not=\{0\}$ and ${\mathfrak z}$ is the center of ${\mathfrak g}$.
  Choosing a basis $V_j$ of ${\mathfrak v}$ and identifying $\mathfrak g$ with the Lie algebra of left-invariant vector fields on $G$, 
  one defines the sublaplacian

    $$
  \Delta_G= \sum_{1\leq j\leq {\rm dim}\, {\mathfrak v}} V_j^2
  $$
  together with the associated Schr\"odinger propagator ${\rm e}^{it\Delta_G}$.  
We are interested in the asymptotic analysis as~$\eps$ goes to $0$
of quantities of the form 
\begin{equation}\label{def:timeavdens}
{1\over T} \int_0^{T} \int_G \phi(x)| {\rm e}^{i {t\over  2 \eps^{\aleph} } \Delta_G}\psi^\eps_0(x)|^2 dx\,dt
\end{equation}
for $\phi\in{\mathcal C}_c^\infty(G)$, $T\in\R$, $\aleph\in \R$
 and $(\psi^\eps_0)_{\eps>0}$ a bounded family of $L^\infty(\R,L^2(G))$
which satisfies
\begin{equation}
\label{Hscriterium}
\exists s,C_s>0,\qquad \forall \eps>0\qquad   \eps^{s} \| (-\Delta_G)^{s\over 2} \psi^\eps_0\|_{L^2(G)}+  \eps^{-s} \| (-\Delta_G)^{-{s\over 2} }\psi^\eps_0\|_{L^2(G)}\leq C_s,
\end{equation}
so that the oscillations of the initial data are exactly of size $1/\eps$. Taking into account that the operator $\Delta_G$ is homogeneous of degree~$2$ and writing 
$$
{t\over \eps^\aleph} \Delta_G= {t\over\eps^{\aleph +2} } \eps^2 \Delta_G,
$$
 we choose $\aleph >-2$.
 Considering the asymptotics $\eps\rightarrow 0$ then consists in doing an analysis in large times (times of  sizes $O\left({ \eps^{-\aleph -2}}\right)$) simultaneously  with the study of the dispersion of the concentration or oscillation effects that are present in the initial data. 
 A consequence of our main  results is the next theorem where we denote by ${\mathcal M}^+(Z)$ the set of finite positive Radon measures on a locally compact Hausdorff set~$Z$ (see Definition \ref{def_LinftyRX+} for  
  $L^\infty(\R,\mathcal M^+(Z))$).

\begin{theorem}
\label{theorem1}
Let $G$ be a H-type group and $(\psi^\eps_0)_{\eps>0}$ a bounded family of $L^\infty(\R,L^2(G))$ satisfying~\eqref{Hscriterium}.
Any weak limit of the measure 
$| {\rm e}^{i  {t\over 2\eps^{\aleph} } \Delta_G}\psi^\eps_0(x)|^2 dx\,dt$
 is of the form $d\varrho_t(x)\otimes dt$
where $t\mapsto \varrho_t$ is a  map in $L^\infty(\R,\mathcal M^+(G))$.
Moreover, for almost all $t\in \R$,
the measure $\varrho_t$ writes 
$$\varrho_t= \varrho_t^{\mathfrak v^*}+ \varrho_t^{\mathfrak z^*}, \qquad\mbox{with}\quad
t\longmapsto\varrho_t^{\mathfrak z^*} \ \mbox{and} \  t\longmapsto \varrho_t^{\mathfrak v^*} \ \mbox{in}\ L^\infty(\R,{\mathcal M}^+(G)),$$ 
and has the following properties. 
\begin{itemize}
\item  If $\aleph \in(-2,-1)$, 
 for all $t\in \R$, 
 $\varrho_t^{\mathfrak v^*}=\varrho_0^{\mathfrak v^*}$ and $\varrho_t^{\mathfrak z^*}=\varrho_0^{\mathfrak z^*}$ are independent of the time $t\in \R$, and $\varrho_0^{\mathfrak v^*}+\varrho_0^{\mathfrak z^*}$ is equal to a weak limit of $|\psi^\eps_0(x)|^2 dx$.
\item If $\aleph=-1$, 
then $\ \varrho^{\mathfrak z^*}_t = \varrho^{\mathfrak z^*}_0$
where $\varrho^{\mathfrak z^*}_0 \in \mathcal M^+(G)$  depends only on  $(\psi^\eps_0)$, 
and 
\begin{equation}\label{eq:rhoEuclidean}
\varrho_t^{\mathfrak v^*} (x) =
\int_{\mathfrak v^*} 
\varsigma_0 \left({\rm Exp} (t\,  \omega\cdot  V ) x,d\omega\right).
\end{equation}
for  $\varsigma_0\in{\mathcal M}^+(G\times \mathfrak v^*)$ which depends only on  $(\psi^\eps_0)$.
\item  If $\aleph \in (-1,0)$, then $\varrho^{\mathfrak v^*}_t=0$ and 
 $\partial_t \varrho^{\mathfrak z^*}_t=0$ holds in the sense of distributions on $\R \times G$.
 \item If $\aleph=0$, then $\varrho^{\mathfrak v^*}_t=0$ and 
$\varrho^{\mathfrak z^*}_t$ decomposes into
$$
\varrho_t^{\mathfrak z^*} = \sum_{n\in \N} 
\int_{{\mathfrak z}^*\setminus\{0\}} 
\gamma_{n,t}(x,d\lambda) 
$$
where $t\mapsto \gamma_{n,t}$ is in $L^\infty(\R,{\mathcal M}^+(G \times (\mathfrak z^*\setminus\{0\}))$;
furthermore we have  in the sense of distributions on $\R \times G \times (\mathfrak z^*\setminus\{0\})$,
$$
\left(\partial_t -{2n+d\over 2|\lambda|} {\mathcal Z}^{(\lambda)}\right)\gamma_{n,t}=0,
$$
where  ${\mathcal Z}(\lambda)$ is the left invariant vector field corresponding to $\lambda\in{\mathfrak z}^*$.
\item	If $\aleph>0$, then  $\varrho_t=0$ for all $t\in \R$.
\end{itemize}
\end{theorem}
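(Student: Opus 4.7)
The plan is to derive Theorem~\ref{theorem1} as a consequence of the two main ingredients highlighted in the abstract: a description of semi-classical measures adapted to the stratified structure of~$G$, and an Egorov-type theorem for the propagator generated by $\Delta_G$. The argument would split into (i) extracting weak limits via semi-classical measures, (ii) transporting them through the Egorov formula, and (iii) a case analysis in~$\aleph$ based on critical scales.

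First I would invoke the semi-classical pseudodifferential calculus on~$G$ developed in the body of the paper. The dual~$\widehat G$ of an H-type group decomposes into an abelianized part parametrized by~$\mathfrak v^*$ (representations trivial on the center) and a generic part parametrized by $\lambda\in\mathfrak z^*\setminus\{0\}$; any semi-classical measure on~$G$ accordingly splits into two contributions. Applied to the family $\psi^\eps(t,\cdot)=e^{it\Delta_G/(2\eps^\aleph)}\psi^\eps_0$, hypothesis~\eqref{Hscriterium} ensures that no mass escapes to infinite frequency nor concentrates at $\lambda=0$, so after extraction the measure $|\psi^\eps(t,x)|^2\, dx\, dt$ converges to the pairing of an $L^\infty(\R)$-valued semi-classical measure with the constant symbol~$1$. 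A standard compactness and disintegration argument using the uniform $L^2$-bound gives the factorization $d\varrho_t(x)\otimes dt$ with $t\mapsto\varrho_t\in L^\infty(\R,\mathcal M^+(G))$ and the splitting $\varrho_t=\varrho_t^{\mathfrak v^*}+\varrho_t^{\mathfrak z^*}$.

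Second, I would compute the time evolution using the Egorov theorem. Writing $(t/\eps^\aleph)\Delta_G=(t/\eps^{\aleph+2})\,\eps^2\Delta_G$, the propagator acts at semi-classical time $t/\eps^{\aleph+2}$ on the symbol level. On the $\mathfrak v^*$ component the principal symbol of $-\eps^2\Delta_G$ is the Euclidean $|\omega|^2$, whose Hamilton flow is the free translation $x\mapsto{\rm Exp}(t\,\omega\cdot V)x$; this flow reaches the observation scale exactly at $\aleph=-1$, is frozen for $\aleph<-1$, and is accelerated to infinity for $\aleph>-1$. On the $\mathfrak z^*$ component the symbol is operator-valued with eigenvalues $(2n+d)|\lambda|$; the corresponding flow on~$G$ at Landau level~$n$ is generated by $\bigl((2n+d)/(2|\lambda|)\bigr)\mathcal Z^{(\lambda)}$, reaching the critical scale at $\aleph=0$, frozen for $\aleph<0$ and dispersed for $\aleph>0$. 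Combining the two critical values $\aleph=-1$ and $\aleph=0$ yields the five regimes of the statement, and \eqref{eq:rhoEuclidean} is the explicit push-forward under the Euclidean flow, with $\varsigma_0$ the semi-classical measure of~$(\psi^\eps_0)$ over $G\times\mathfrak v^*$.

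The main obstacle lies in the off-critical regimes where one requires a component of $\varrho_t$ to vanish rather than to stabilize at an unspecified limit: this demands a non-stationary-phase argument in the time averaging, which on the $\mathfrak z^*$ side must be implemented representation-by-representation because the symbol is operator-valued and the spectral projectors onto Landau levels do not commute with multiplication by $\phi$. A secondary difficulty is to verify that the $\mathfrak z^*$ component is genuinely unaffected at $\aleph=-1$, and symmetrically the $\mathfrak v^*$ component at $\aleph=0$: this amounts to showing that the sub-dominant part of the Hamiltonian is negligible at the scale where the dominant part is critical, which should follow from the differing homogeneity degrees of $\mathfrak v^*$ and $\mathfrak z^*$ under the dilations of~$G$.
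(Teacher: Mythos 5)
Your proposal follows essentially the same route as the paper: reduce to semi-classical measures via the strict $\eps$-oscillation criterion inherited from~\eqref{Hscriterium}, decompose the limiting measure according to $\widehat G\simeq(\mathfrak z^*\setminus\{0\})\sqcup\mathfrak v^*$, apply the Egorov-type theorem with $\tau=\aleph+2$, and read off the five regimes from the two critical scales $\aleph=-1$ (Euclidean transport by $\omega\cdot V$) and $\aleph=0$ (Landau-level transport by $\frac{2n+d}{2|\lambda|}\mathcal Z^{(\lambda)}$). The only slight mismatch is terminological: the vanishing in the super-critical regimes is obtained in the paper not via a non-stationary-phase bound per se but by showing the measure is invariant under a flow escaping to infinity (and, for the $\mathfrak v^*$ part, that mass at $\omega=0$ is forbidden by strict $\eps$-oscillation), which forces it to be zero.
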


Several aspects are interesting to notice.
Firstly, there exists a threshold, $\aleph=0$, above which the  weak limits of the time-averaged  energy density is~$0$; 
this means that for sufficiently  large scale of times, all the concentrations and oscillations effects have disappeared: the dispersion is complete. A similar picture holds in the Euclidean setting, however the threshold occurs at $\aleph=-1$ (see~\cite{AFM} and the Appendix in this article). This  illustrates the fact that the dispersion is slower in sub-Riemanian geometries  than in Euclidean ones, as already noticed in~\cite{BGX,hiero,BFG2}. 
Secondly, one observes a decomposition  of these weak limits   into two parts $\varrho_t=\varrho^{\mathfrak v^*}_t+\varrho^{\mathfrak z^*}_t$ which turn out to have different transitional regimes: 
$\aleph=-1$ for $\varrho_t^{\mathfrak v^*}$ and $\aleph=0$ for $\varrho_t^{\mathfrak z^*}$. This splitting is also present in the works~\cite{BS} about Grushin-Schr\"odinger equation and~\cite{Zeld97,CdVHT} about sublaplacians on contact manifolds. The part $\varrho_t^{\mathfrak v^*}$ behaves like in the Euclidean setting and equation~(\ref{eq:rhoEuclidean}) also presents Euclidean features. However, the other part~$\varrho_t^{\mathfrak z^*}$ looks completely different and is specific to the nilpotent Lie group context,  showing that the structure of the limiting objects is more complex than in the Euclidean case.  

\medskip

Similar questions have been addressed for the Laplace operator in different geometries, including compact ones: in the torus and for integrable systems (\cite{AM:12,AFM}), in Zoll manifolds (see \cite{MR:16,MaciaReview} and the review~\cite{MaciaAv}), or on manifolds such as the sphere (\cite{MaciaDispersion}). 
In contrast with the non-compact  case (which is ours here), 
the compactness of the manifold implies that  the complete dispersion of the energy is not possible;
furthermore, the weak limits  of the energy densities possess structural properties due to the geometry of the manifold, such as invariance by some flows, that may allow for their determination. 
For example, on compact Riemanian manifolds, 
such a measure belongs to the set of measures which are invariant under the geodesic flow, and this property is at the root  of quantum ergodicity theorem~\cite{Schni,CdV,zeldi} (see the introductory survey~\cite{AFF} and the articles~\cite{ZZ,DJN} for more recent developments in the topic). 
The question of quantum ergodicity also arises in sub-riemanian geometries and have been addressed for contact~\cite{Zeld97,CdVHT} and quasi-contact~\cite{Savale} manifolds. As will be made precise in the next sections, we observe invariance properties by a flow that turns out to coincide with the Reeb flow used in~\cite{Zeld97,CdVHT}  when $G$ is the Heisenberg group.

\medskip 

Theorem~\ref{theorem1} is a consequence of the  main results of this paper which use the semi-classical approach introduced  in~\cite{FF2} for H-type Lie groups  and are in the spirit of the article~\cite{MaciaReview} for the treatment of the large time evolution together with the oscillations. 
They are as follows:
\begin{enumerate}
\item
The first result is an Egorov's type Theorem on H-type groups  (see Theorem~\ref{theo:Egorov}), which describes as~$\eps$ goes to~$0$ the asymptotics of  quantities of the form
\begin{equation}
\label{eq:egorov}
\int_{\R} \theta(t) \left( {\rm e}^{-i {t\over 2\eps^{\aleph} } \Delta_G} {\rm Op}_\eps (\sigma) {\rm e}^{i {t\over 2 \eps^{\aleph} } \Delta_G} f,f\right)_{L^2(G)} dt
 \end{equation}
 for $\theta\in{\mathcal C}_c^\infty(\R)$, $f\in L^2(G)$ and where the operator $ {\rm Op}_\eps (\sigma) $ is the semi-classical operator of a symbol~$\sigma$ as introduced in~\cite{FF2} (see also \cite{FF, FR, BFG1, Taylor84})
 All these elements are carefully explained in Section~\ref{sec:result1Egorov}.\\
\item The second result concerns the structure of the limiting objects when passing to the limit in~(\ref{eq:egorov}). We extend the notion of semi-classical measure introduced in~\cite{FF2} to a time-dependent context and analyze the  properties of the semi-classical measures associated in that manner with the family $
({\rm e}^{i {t\over 2 \eps^{\aleph}  } \Delta_G}\psi^\eps_0)_\eps$, depending on the value of~$\aleph$. We give a complete description of these limiting objects in Theorems~\ref{theo:mesures} and~\ref{theo:schro} below. \\
\end{enumerate}

The proof of Theorem~\ref{theorem1} is
based on the fact that, under certain hypothesis on the size of the oscillation, 
the analysis of the weak limits of the energy density can be deduced from those of its semi-classical measures, which are also called quantum limits in some geometric contexts. This  idea was
introduced in the 90's  in the Euclidean case (see~\cite{HelfferMartinezRobert,gerard_X,LionsPaul}),  and adapted for $H$-type groups in~\cite{FF2}.
The hypothesis on the size of the oscillation of $(\psi_0^\eps)_\eps$ is  a {\it uniform strict $\eps$-oscillation} property (see Section~\ref{subsec:epsosc})
which guarantees that the oscillations are of sizes $\eps^{-1}$
and is implied by the condition in~(\ref{Hscriterium}).   Then,
using the semi-classical pseudodifferential operators constructed in~\cite{FF2}, we determine the semi-classical measures that are associated with the family ${\rm e}^{i {t\over2 \eps^{\aleph} } \Delta_G}\psi^\eps_0$ and prove Theorem~\ref{theorem1}. 

\medskip

 A straightforward generalization of our result would consist in adding a scalar  
 potential $\eps^\theta V(x)$ for a smooth function $V$ defined on $G$ and a parameter $\theta\in \R^+$. Then, one could exhibit regimes depending on the position of $\theta$ with respect to $\aleph$ and the vector fields to consider should be modified in a non-trivial manner. One should then consider operations on symbols $\sigma(x,\lambda)$  that involve differentials of the potential~$V(x)$ and difference operators acting on the operator part of $\sigma(x,\lambda)$.  
 A second generalization would be to consider more general stratified and graded groups. 
 This would require to obtain in this more general setting similar results to those of Appendix B which at the moment heavily rely on the special case of $H$-type groups. 
 However, the authors think this is doable and they have this generalization in mind. They also think that this approach can be adapted to homogeneous spaces.

\medskip 

In the next section, we recall the definition of $H$-type groups and present our two  main results shortly described  above,  the Egorov theorem~\ref{theo:Egorov}
and the analysis of the semi-classical measures associated with a family of the Schr\"odinger equation
in Theorem~\ref{theo:schro}. 
After some preliminary results on  semi-classical symbols in Section~\ref{sec:3.1},
we prove both theorems in Section~\ref{sec:proofs}.
Theorem~\ref{theorem1} is a consequence of this analysis and is proved in Section~\ref{sec:densitylimit}. An Appendix is devoted to a short description of the Euclidean case and to some technical auxiliary results.


 \section{Main results}\label{sec:result1Egorov}

  \subsection{H-type groups, notations and definitions}
 \label{subsec_preliminaries}
A simply connected Lie group  $G$  is said to be stratified of step~$2$
  if its Lie algebra~${\mathfrak g}$ is  equipped with a vector space decomposition
$$  
  \displaystyle
  {\mathfrak g}=  \mathfrak v \oplus \mathfrak z \, ,
  $$
  such that $[{\mathfrak v},{\mathfrak v}]= {\mathfrak z}\not=\{0\}$ and ${\mathfrak z}$ is the center of ${\mathfrak g}$. 
  Via the exponential map  
 $$
 {\rm exp} :  {\mathfrak g} \rightarrow G 
 $$ 
 which is a diffeomorphism from ${\mathfrak g}$ to $G$, 
 one identifies $G$ and ${\mathfrak g}$ as a set and a manifold. 
 Under this identification, 
 the  group law on $G$ (which is generally not commutative) is provided by the Campbell-Baker-Hausdorff formula, 
 and $(x,y)  \mapsto x  y  $ is a polynomial map.  
 More precisely, if $x={\rm Exp}(v_x+z_x)$ and 
 $y={\rm Exp}(v_y+z_y)$ then
 $$
 xy ={\rm Exp} (v+z),\;\; v=v_x+v_y\in{\mathfrak v},\;\;z= z_x+z_y+\frac 12 [v_x,v_y]\in{\mathfrak z}.
 $$
If $x={\rm Exp}(v)$ then $x^{-1}={\rm Exp}(-v)$. We may identify ${\mathfrak g}$ with the space of left-invariant vector fields via 
 $$
 Xf(x)= \left.{d\over dt} f({\rm Exp}(tX)x)\right|_{t=0},
 \quad x\in G.
 $$
    \medskip
    
For any~$\lambda \in  \mathfrak z^\star$ (the dual of the center~$  \mathfrak z$) we define a skew-symmetric bilinear form on $\mathfrak v$ by 
\begin{equation}
\label{skw}
\forall \, U,V \in  \mathfrak v \, , \quad B(\lambda) (U,V):= \lambda([U,V]) \, .
 \end{equation}
 Following~\cite{Kaplan80}, we say that $G$ is of H-type (or of Heisenberg type) when, once the inner products on~${\mathfrak v}$ and  on~${\mathfrak z}$ are fixed, the endomorphism of this skew symmetric form (that we still denote by $B(\lambda)$) satisfies 
 \begin{equation}\label{eq:B(lambda)}
 \forall \lambda\in{\mathfrak z}^*,\;\; B(\lambda)^2=-|\lambda|^2 {\rm Id}_{\mathfrak v}.
 \end{equation}
 This implies in particular that the dimension of ${\mathfrak v}$ is even. We set 
 $$
 {\rm dim}\, {\mathfrak v} =2d,\;\; {\rm dim} \,{\mathfrak z}=p.
 $$
 
 \subsubsection{Orthonormal basis of ${\mathfrak g}$}
 \label{subsubsec_ONBg}
 One can find an orthonormal basis
  $\left (P_1 , \dots ,P_d,  Q_1 , \dots ,Q_d\right)$ 
  where $B(\lambda)$ is represented by 
    \begin{equation}
\label{eqBJ}  
  B(\lambda)(U,V)= |\lambda| U^t JV, \quad\mbox{where}\quad 
J=\begin{pmatrix}0 & {\rm Id} \\ -{\rm Id} & 0\end{pmatrix}.
  \end{equation}
for two vectors $U,V\in \mathfrak v$ written in the $\left (P_1 , \dots ,P_d,  Q_1 , \dots ,Q_d\right)$-basis.
We decompose~$ \mathfrak v$ in a $\lambda$-depending way as
$ \mathfrak v = \mathfrak p_\lambda+  \mathfrak q_\lambda$
 with 
 $$
 \begin{aligned}
  \mathfrak p:=\mathfrak p_\lambda:= \mbox{Span} \, \big (P_1, \dots ,P_d \big) \, , & \quad \mathfrak q:=\mathfrak q_\lambda:= \mbox{Span} \, \big (Q_1, \dots ,Q_d\big).
   \end{aligned}
$$
The fundamental property~(\ref{eq:B(lambda)}) satisfied by  $B(\lambda)$ considered as an endomorphism on ${\mathfrak v}$ implies that for all $V\in{\mathfrak v}$, 
$| B(\lambda)V|_{\mathfrak v}^2 = |\lambda|^2 | V|_{\mathfrak v}^2$, and, by linearization, we deduce
$$
\forall \,U,U'\in {\mathfrak v},\;\; 
\forall\, \lambda,\lambda'\in{\mathfrak z}^*, \;\; (B(\lambda)U,B(\lambda')U')_{\mathfrak v}
=
 (\lambda,\lambda')_{{\mathfrak z}^*}(U,U')_{\mathfrak v}.
$$
As $(B(\lambda)U,B(\lambda')U')_{\mathfrak v} = (\lambda,[U,B(\lambda')U']\rangle_{{\mathfrak z}^*,{\mathfrak z}}$, 
we deduce for any $\lambda\in {\mathfrak z}^*\setminus\{0\}$ 
\begin{equation} 
\label{eq_Zlambda}
\forall j=1,\ldots,d,\qquad
[P_j, Q_j]=|\lambda|^{-1}{\mathcal Z}^{(\lambda)}, 
\end{equation}
where ${\mathcal Z}^{(\lambda)}$ is the unique vector of ${\mathfrak z} $ equal to $\lambda$  through the identification of ${\mathfrak z}^*$ to ${\mathfrak z}$ via the inner product,
and for all $1\leq j_1,j_2 \leq d$
\begin{equation} 
\label{eq_comPQ}	
  j_1\not= j_2 \ \Longrightarrow \  
[P_{j_1}, P_{j_2}]=0,\quad
[Q_{j_1}, Q_{j_2}]=0,\quad
[P_{j_1}, Q_{j_2}]=0.
\end{equation}

\subsubsection{Realisation of the elements in $G$}
Denoting by $z=(z_1,\cdots ,z_p)$ the coordinate of $Z$ in a fixed orthonormal basis $(Z_1,\cdots, Z_p)$ of $\mathfrak z$, and once given $\lambda\in\mathfrak z^*$,
we will often use the writing of an element $x\in G$ or $X\in \mathfrak g$ as 
\begin{equation}
\label{eqxpqz}
x={\rm Exp}(X) , \qquad X=p_1P_1 +\ldots + p_dP_d \ + \ q_1Q_1 + \ldots + q_d Q_d \ + \ z_1 Z_1 +\ldots + z_p Z_p,
\end{equation}
where $p=(p_1,\cdots,p_d)$ are the $\lambda$-dependent coordinates of $P$ on the vector basis $(P_1, \cdots,P_d)$, by $q=(q_1,\cdots,q_d)$ those of $Q$ on $(Q_1,\cdots,Q_d)$, while the coordinates $z=(z_1,\cdots , z_p)$ of $Z$ are independent of $\lambda$.
We will also fix an orthonormal basis $(V_1,\ldots, V_{2d})$ of $\mathfrak v$ to write the coordinates 
$$
v=(v_1,\ldots,v_{2d}), 
\quad\mbox{of an element}\quad
V=v_1V_1 +\ldots + v_{2d} V_{2d}
$$
of  $\mathfrak v$
independently of $\lambda$.

\subsubsection{Functional spaces on $G$}
The inner products on $\mathfrak v$ and ${\mathfrak z}$ allow us to consider the Lebesgue measure $dv\, dz$ on ${\mathfrak g}={\mathfrak v}\oplus{\mathfrak z}$. Via the identification of $G$ with ${\mathfrak g}$ by the exponential map, this induces a Haar measure $dx$ on $G$. This measure is invariant under left and right translations:
  $$
  \forall f  \in L^1(G) \, ,  \quad  \forall x  \in G \,, \quad \int_G f(y) dy  = \int_G f(x  y)dy= \int_G f(y  x)dy \, .
   $$
Note that  the convolution of two functions $f$ and $g$ on $G$ is given by
    \begin{equation}
\label{convolutiondef}
  f*g(x) :=  \int_G f(x  y^{-1})g(y)dy = \int_G f(y)g(y^{-1}   x)dy
   \end{equation}
  and  as in the Euclidean case we define   Lebesgue spaces by
$$
 \|f\|_{L^q (G)}  := \left( \int_G |f(y)|^q \: dy \right)^\frac1q \, ,
 $$
 for $q\in[1,\infty)$, with the standard modification when~$q=\infty$.
 
 \medskip 

We   define the Schwartz space~${\mathcal S}(G)$   as the set of smooth functions on~$G$ such that
 for all~$ \alpha,\beta$ in~${\mathbb N}^{2d+p}$,  
 the function
 $ x\mapsto x^\beta   {\mathcal X}^{\alpha}f(x) $ belongs to~$ L^\infty(G),
  $ where~${\mathcal X}^{\alpha}$ denotes a product of~$|\alpha|$  left invariant vector fields forming a basis of~${\mathfrak g}$ and $x^\beta$ a product of $|\beta|$ coordinate functions on $G\sim \mathfrak v \times \mathfrak z$. The Schwartz space~${\mathcal S}(G)$ can be naturally identified with  the Schwartz space~${\mathcal S}(\R^{2d+p})$;
  in particular, it is dense in Lebesgue spaces.

\subsubsection{Dilations}

  Since $G$ is stratified,   there is a natural family of dilations on ${\mathfrak g}$ defined for $t>0$ as follows: if~$X$ belongs to~$ {\mathfrak g}$, we  decompose~$X$ as~$\displaystyle X=V+Z$ with~$V\in {\mathfrak v}$ and~$Z\in {\mathfrak z}$ and we set
   $$
   \delta_t X:=tV+t^2Z  \, .
   $$
 This allows us to  define the dilation on the Lie group $G$ via the identification by the exponential map:
 $$
 \begin{array}{ccccc}
& {\mathfrak g} &\overset{\delta_t} \longrightarrow & {\mathfrak g}&\\
 {\small\rm exp}&  \downarrow& & \downarrow& {\small\rm exp}\\
  &G & \overset{{\rm exp}\, \circ\,  \delta_t \, \circ\,  {\rm exp}^{-1}}{\longrightarrow}&G
  \end{array}$$
To simplify the notation, 
 we shall still denote by $\delta_t$ the map ${\rm exp}\, \circ \delta_t \, \circ {\rm exp}^{-1}$.
 The dilations $\delta_t$, $t>0$, on $\mathfrak g$ and $G$ form a one-parameter group of automorphisms of the Lie algebra $\mathfrak g$ and of the group~$G$.
The Jacobian of the dilation $\delta_t$ is $t^Q$ where
 $$Q:={\rm dim}\, {\mathfrak v} +2{\rm dim}\, {\mathfrak z} = 2d+2p$$
  is called the homogeneous dimension of $G$.
 A differential operator $T$ on $G$
(and more generally any operator $T$ defined on $C^\infty_c(G)$ and valued in the distributions of $G\sim \R^{2d+p}$) 
 is said to be homogeneous of degree $\nu$ (or $\nu$-homogeneous) when 
 $
 T (f\circ \delta_t) = t^\nu (Tf)\circ \delta_t. 
 $

\subsection{The irreducible unitary representations and the Fourier transform}  
\subsubsection{Irreducible unitary  reresentations}
For  $\lambda\in \mathfrak z^*\setminus\{0\}$, 
the irreducible unitary representation $\pi^{\lambda}_{x} $ of~$G$ on~$L^2( \mathfrak p_\lambda)$ is defined by  
\begin{equation*}\label{def:pilambdanu}
\pi^{\lambda}_{x} \Phi(\xi)=
 {\rm exp}\left[{i\lambda(z)+ \frac i2 |\lambda|\,p q +i\sqrt{|\lambda|} \,\xi q} \right]\Phi \left(\xi+\sqrt{|\lambda|}p\right),
 \end{equation*}
 where $x$ has been written as in \eqref{eqxpqz}.
 The representations $\pi^\lambda$, $\lambda\in \mathfrak z^*\setminus\{0\}$, are infinite dimensional. 
 The other unitary irreducible representations of $G$
  are given by the characters of the first stratum in the following way:
 for every  $\omega\in \mathfrak v ^*$, 
  we set
$$
\pi^{0,\omega}_x= {\rm e}^{i \omega(V)}, \quad
x={\rm Exp} (V+Z)\in G, \quad\mbox{with}\ V\in{\mathfrak v} \ \mbox{and} \  Z\in{\mathfrak z}.
$$  
 The set $\widehat G$ of all unitary irreducible representations modulo unitary equivalence 
is then parametrized by $({\mathfrak z}^*\setminus \{0\})\sqcup {\mathfrak v}^*$: 
\begin{equation}
\label{eq_widehatG}	
\widehat G = 
\{\mbox{class of} \ \pi^\lambda \ : \ \lambda \in \mathfrak z^* \setminus\{0\}\} 
\sqcup \{\mbox{class of} \ \pi^{0,\omega} \ : \ \omega \in \mathfrak v^* \}.
\end{equation}
We will often identify each representation $\pi^\lambda$ with its equivalence class; in this case, we may write~$\mathcal H_\lambda$ for the Hilbert space of the representation instead of $L^2(\mathfrak p_\lambda) \sim L^2(\R^d)$; we also set ${\mathcal H}_{(0,\mu)}=\C$. 
Note that the trivial representation $1_{\widehat G}$ corresponds to the class of $\pi^{(0,\omega)}$ with 
$\omega=0$, i.e.
$
1_{\widehat G}
:=
\pi^{(0,0)}.
$

\subsubsection{The  Fourier transform}
\label{subsec_F}

In contrast with the Euclidean case, the Fourier transform  is defined on~$\widehat G$ and is valued  in   the space of  bounded operators
on~$L^2( \mathfrak p_\lambda)$. More precisely, the Fourier transform of a function~$f$ in~$L^1(G)$ is  defined  as follows: 
for any~$\lambda\in{\mathfrak z}^*$, $\lambda\not=0$,
 $$
 \widehat f(\lambda):=
{\mathcal F}f(\lambda):=
\int_G f(x)\left( \pi^{\lambda}_{x }\right)^* \, dx \, ,
$$
Note that    for any~$\lambda\in{\mathfrak z}^*$, $\lambda\not=0$, we have $\left( \pi^{\lambda}_{x }\right)^* =\pi^{\lambda}_{x^{-1} }$ and the map~$\pi^{\lambda}_{x}$
 is a group homomorphism from~$G$ into the group~$U (L^2( \mathfrak p_\lambda))$  of unitary operators
of~$L^2( \mathfrak p_\lambda)$, so functions~$f$ of~$L^1(G)$  have a Fourier transform~$\left({\mathcal F}(f)(\lambda)\right)_{\lambda}$ which is a bounded family of bounded operators on~$L^2( \mathfrak p_\lambda)$
 with uniform bound:
\begin{equation}
\label{eq_FfnormL1}
\|\mathcal Ff (\lambda) \|_{{\mathcal L}(L^2(\mathfrak p_\lambda))}
\leq
\int_G |f(x)|\|(\pi^\lambda_x)^* \|_{{\mathcal L}(L^2(\mathfrak p_\lambda))} dx
=
\|f\|_{L^1(G)}.
\end{equation}
since the unitarity of $\pi^\lambda$ implies $\|(\pi^\lambda_x)^* \|_{{\mathcal L}(L^2(\mathfrak p_\lambda))}=1$.

\subsubsection{Plancherel formula}
The Fourier transform can be extended to an isometry from~$L^2(G)$ onto the Hilbert
space of measurable families~$ A  = \{ A (\lambda ) \}_{(\lambda) \in{\mathfrak z}^*\setminus \{0\}}$
 of operators on~$L^2( \mathfrak p_\lambda)$ which are
Hilbert-Schmidt for almost every~$\lambda\in{\mathfrak z}^*\setminus \{0\}$,  with norm
\[ \|A\| := \left( \int_{\mathfrak z^* \setminus\{0\}}
\|A (\lambda )\|_{HS (L^2( \mathfrak p_\lambda))}^2 |\lambda|^d \, d\lambda
\right)^{\frac{1}{2}}<\infty  \, .\]
  We have the following Fourier-Plancherel formula: 
 \begin{equation}
\label{Plancherelformula} \int_G  |f(x)|^2  \, dx
=  c_0 \, \int_{\mathfrak z^* \setminus\{0\}} \|{\mathcal F}f(\lambda)\|_{HS(L^2( \mathfrak p_\lambda))}^2 |\lambda|^d  \,  d\lambda   \,, 
\end{equation}
where $c_0>0$ is a computable constant.
This yields  an inversion formula for any~$ f \in {\mathcal S}(G)$ and~$x\in G$:
\begin{equation}
\label{inversionformula} f(x)
= c_0 \, \int_{\mathfrak z^* \setminus\{0\}} {\rm{Tr}} \, \Big(\pi^{\lambda}_{x} {\mathcal F}f(\lambda)  \Big)\, |\lambda|^d\,d\lambda \,,
\end{equation}
where ${\rm Tr}$ denotes the trace of operators of ${\mathcal L}(L^2({\mathfrak p}_\lambda))$.
This   formula makes sense since for $f \in {\mathcal S}(G)$, the operators ${\mathcal F}f(\lambda)$, $\lambda\in \mathfrak z^*\setminus\{0\}$, are trace-class and $\int_{\mathfrak z^* \setminus\{0\}} {\rm{Tr}} \, \Big| {\mathcal F}f(\lambda)  \Big|\, |\lambda|^d\,d\lambda$ is finite.

\subsubsection{Fourier transform and finite dimension representations}
Usually, the Fourier transform of a locally compact group $G$ would be defined on~$\widehat G$, the set of unitary irreducible representations of~$G$ modulo equivalence, via
$$
\widehat f(\pi)=
\mathcal F f(\pi) = \int_G f(x) \pi(x)^* dx, 
$$
for a representation $\pi$ of $G$, and then considering the unitary equivalence we obtain a measurable field of operators $\mathcal F f(\pi)$, $\pi\in \widehat G$.
Here, the Plancherel measure is supported in the subset $\{\mbox{class of} \ \pi^\lambda \ : \ \lambda \in \mathfrak z^* \setminus\{0\}\}$ 
 of $\widehat G$ (see \eqref{eq_widehatG})
 since it is $c_0|\lambda|^d d\lambda$.
 This allows us to identify~$\widehat G$ and~${\mathfrak z}^*\setminus\{0\}$ when considering measurable objects up to null sets for the Plancherel measure. 
 However, our semiclassical analysis will lead us to consider objects which are also supported in the other part of $\widehat G$.
 For this reason, we also set for $\omega\in \mathfrak v^*$ and $f\in L^1(G)$:
 $$
\widehat f(0,\omega)=
 \mathcal F f (0,\omega) 
 := \int_G f(x) (\pi^{(0,\omega)}_x)^* dx
=\int_{\mathfrak v \times \mathfrak z} 
f({\rm Exp}(V+Z) ) e^{-i\omega(V)} dV dZ.	
$$

\subsubsection{Convolution and Fourier operators}
\label{subsubsec_convop}
The Fourier transform  sends the    convolution, whose definition is recalled in~(\ref{convolutiondef}),
to  composition in the following way:
\begin{equation}\label{fourconv}
 {\mathcal F}( f \star g )( \lambda ) 
 = {\mathcal F} g( \lambda)\
 {\mathcal F}f ( \lambda) \, .
 \end{equation}
We recall that a convolution operator $T$ with integrable convolution kernel $\kappa\in L^1(G)$ is defined by  $Tf=f*\kappa$ and we have $\mathcal F (Tf) = \mathcal F \kappa \, \mathcal F f$ by \eqref{fourconv}; hence, $T$ appears as  a {Fourier multiplier} with Fourier symbol $\mathcal F \kappa$ acting on the left of $\mathcal F f$.
Consequently,
 $T$ is invariant under left-translation and  bounded on $L^2(G)$ with operator norm 
$$\| T\|_{{\mathcal L} (L^2(G))}\leq \sup_{\lambda\in \widehat G} \|\mathcal F \kappa(\lambda) \|_{{\mathcal L}(L^2(\mathfrak p_\lambda))}.$$
In other words, $T$ is in the space $ {\mathcal L} (L^2(G))^G$ of the left-invariant bounded operators on $L^2(G)$.

\subsubsection{The von Neumann algebra of the group} 
Let us denote by $L^\infty(\widehat G)$
the space of bounded symbols, that is, here, measurable fields of operators $\sigma=\{\sigma(\lambda):\lambda \in \widehat G\}$ 
which are bounded
in the sense that the essential supremum 
for the Plancherel measure $c_0|\lambda|^d d\lambda$
$$
\|\sigma\|_{L^\infty (\widehat G)}
:={\rm supess}_{\lambda\in \widehat G} \|\sigma\|_{{\mathcal L}(\mathcal H_\lambda)}
$$ 
 is finite.
The space $L^\infty(\widehat G)$ is naturally equipped with a von Neummann algebra, and is called  the {\it von Neumann algebra of the group}.
As explained above, 
we already know  $L^\infty(\widehat G) \supset \mathcal F L^1(G)$ by \eqref{eq_FfnormL1},  but this inclusion is strict.

 The full Plancherel theorem \cite{Dixmier_C*} implies  that the von Neumann algebras 
$L^\infty(\widehat G)$ and the space 
 $ {\mathcal L} (L^2(G))^G$ of left-invariant bounded operators on $L^2(G)$ introduced above
  are isomorphic via
the mapping $\sigma \mapsto {\rm Op}_1(\sigma)$ 
where ${\rm Op}_1(\sigma)$ is the operator with Fourier operator symbol $\sigma$, 
$$
\mbox{i.e.}\quad
\mathcal F \left({\rm Op}_1(\sigma)f \right) = \sigma \ \mathcal F f, 
\quad f\in L^2(G).
$$
The isomorphism between $L^\infty(\widehat G)$ and 
 $ {\mathcal L} (L^2(G))^G$  allows us to naturally extend the group Fourier transform to distributions $\kappa\in \mathcal S'(G)$ such that the convolution operator $f\mapsto f* \kappa$ is bounded on $L^2(G)$ 
by setting that $\mathcal F (\kappa)$  is the symbol of the corresponding operator in $ {\mathcal L} (L^2(G))^G$.

\subsubsection{Infinitesimal representations and  Fourier transforms of left-invariant vector fields} 
The group Fourier transform can also be extended to certain classes of distributions whose convolution operators yield left-invariant operators.
Indeed, denoting by $\pi(X)$ the infinitesimal representation of $\pi$ at $X\in \mathfrak g$, i.e. $\pi(X) = \frac{d}{dt} \pi ({\rm Exp}(tX))|_{t=0}$, we have
\begin{equation*}\label{F(V)}
\mathcal F(Xf)(\pi) = \pi(X) \mathcal Ff (\pi);
\end{equation*}
here, (the class of) $\pi$ is equal to (the class of) $\pi^\lambda$ or $\pi^{(0,\omega)}$ identified with $\lambda$ or $\omega$ respectively.
For instance, we have for $j=1,\ldots,p$
\begin{equation*}
\label{def:Z}
 {\mathcal F} (Z_jf)(\lambda)=i\lambda_j {\mathcal F}f(\lambda),\quad 
 \mbox{or in other words}\quad  \pi^\lambda( Z_j )= i\lambda_j.
 \end{equation*}
We also compute for any $\lambda\in {\mathfrak z}^*\setminus\{0\}$
 \begin{equation}
\label{eq_piPQZ}	
\pi^\lambda(P_j)=\sqrt{|\lambda|}\partial_{\xi_j},
\qquad
\pi^\lambda(Q_j)=i\sqrt{|\lambda|}{\xi_j}
\qquad\mbox{and}\qquad
\pi^\lambda({\mathcal Z}^{(\lambda)})=i|\lambda|^2,\end{equation}
and for $\omega\in{\mathfrak  v}^\star$ and $j\in\{1,\cdots, p\}$,
\begin{equation}\label{Fourieromega}
\pi^{0,\omega} (V_j)=i\omega_j\;\;\mbox{and}\;\; \pi^{0,\omega}({\mathcal Z}^{(\lambda)})=0.
\end{equation}
The infinitesimal representation of $\pi$ extends to the universal enveloping Lie algebra of $\mathfrak g$ that we identify with the left invariant differential operators on $G$.
Then for such a differential operator~$T$ we have 
$\mathcal F(Tf)(\pi) = \pi(T) \mathcal Ff (\pi)$
and we may write $\pi(T)={\mathcal F}(T)$. 
For instance, if as before ${\mathcal X}^{\alpha}$ denotes a product of $|\alpha|$  left invariant vector fields forming a basis of~${\mathfrak g}$, then 
$$\mathcal F({\mathcal X}^\alpha f)(\pi) = \pi({\mathcal X})^\alpha \mathcal Ff (\pi)\;\;{\rm and}\;\;
\mathcal F({\mathcal X}^\alpha ) = \mathcal F({\mathcal X})^\alpha.$$ 
Note that $\pi({\mathcal X})^\alpha$ may be considered as a field of unbounded operators on $\widehat G$ defined on the smooth vectors of the representations \cite{FR}.

\subsection{The sublaplacian} \label{freq}
The sublaplacian on $G$ is defined by
$$
\Delta_{G}:= \sum_{j=1}^{2d} V_j^2. 
$$
One checks easily that $\Delta_G$ is a differential operator which is left invariant and homogeneous of degree $2$.
In this paper, we shall consider its associated
{\it Schr\"odinger equation}
\begin{equation*}\label{eq:schro}
i\partial_t \psi  = - {1\over 2}\Delta_G \psi,\;\; \psi_{t=0}=\psi _0.
\end{equation*}
The operator $\Delta_G$ is essentially self-adjoint  on $C_c^\infty(G)$ (see \cite[Section 4.1.3]{FR} or \cite[Proof of Lemma 12.1]{strichartz}), 
so the Schr\"odinger equation has a unique solution for any data $\psi_0\in L^2(G)$ by Stone's theorem.  We keep the same notation for its unique self-adjoint (unbounded) extension to $L^2(G)$. 
More precisely, to deal with high-frequencies data, we shall be concerned with the {\it semi-classical  Schr\"odinger equation}
\begin{equation}\label{eq:schrosc}
i\eps^\tau \partial_t \psi^\eps = -{\eps^2\over 2} \Delta_G \psi^\eps,\;\;(\psi^\eps )_{|t=0}=\psi^\eps_0,
\end{equation} 
where $\eps>0$ is a small parameter taking into account the size of the oscillations of the initial data and $\tau>0$ is a parameter allowing us to consider large time behaviour, and as the same time the asymptotics $\eps\rightarrow 0$. 

\medskip 

The definition of $\Delta_G$ is independent of the chosen orthonormal basis for $\mathfrak v$ - although it depends on the scalar product that we have fixed at the very beginning on ${\mathfrak v}$. 
In particular, choosing the basis fixed in Section \ref{subsec_preliminaries}  for any $\lambda\in \mathfrak z^* \setminus\{0\}$
we have
\begin{equation}
\label{eq_DeltaGPQ}	
 \Delta_{G}= \sum_{j = 1}^{d} (P_j^2 + Q_j^2).
 \end{equation}

The  infinitesimal representation (or Fourier transform) of $\Delta_G$  can be computed thanks to 
the equalities in~\eqref{eq_piPQZ}, \eqref{Fourieromega} 
 and~\eqref{eq_DeltaGPQ}: 
at $\pi^{(0,\omega)}$, $\omega\in{\mathfrak v}^*$, it is the number
$$
{\mathcal F} (-\Delta_G)(0,\omega) = |\omega|^2,
$$
and  at $\pi^\lambda$, $\lambda\in{\mathfrak z}^*\setminus\{0\}$, it is the operator 
\begin{equation}\label{def:H}
{\mathcal F} (-\Delta_G)(\lambda) = H(\lambda),
\end{equation}
where~$ H(\lambda)$ is defined on
$L^2(\R^d)$ through the identification $\mathfrak p_\lambda\sim \R^d$, by
\begin{equation}\label{def:H2}
H(\lambda )=|\lambda| \sum_{1\leq j\leq d} \left( -\partial_{\xi_j}^2+\xi_j^2\right).
\end{equation}
Up to a constant, this is the quantum harmonic oscillator.
The spectrum $\{|\lambda|(2n+d), n\in \N\}$ of $H(\lambda)$ is discrete
and the eigenspaces are finite dimensional.
To each eigenvalue $|\lambda|(2n+d)$, we denote by 
$
\Pi_n^{(\lambda)}$ and 
$\mathcal V_n^{(\lambda)} 
$
 the corresponding spectral orthogonal projection and eigenspace.
The well-known description of the eigenspaces in terms of Hermite functions is recalled in Appendix \ref{sec_Hermite+pflem_sigma_comH}.
In particular, for each $n\in \N$, all the eigenspaces $\mathcal V_n^{(\lambda)}$, $\lambda\in \mathfrak z^*\setminus\{0\}$, are isomorphic, and may be denoted by $\mathcal V_n$.

\subsection{Semi-classical pseudodifferential operators}

\subsubsection{The space ${\mathcal A}_0$ of semi-classical symbols}
We denote by $\mathcal A_0$ the space of symbols 
$\sigma = \{\sigma(x,\pi) : (x,\pi)\in G\times \widehat G\}$ of the form 
$$
\sigma(x,\lambda)=\mathcal F \kappa_x (\lambda) = \int_G \kappa_x(z) (\pi^\lambda_z)^* dz, 
$$
where $x\mapsto \kappa_x(\cdot)$ is a smooth and compactly supported function from $G$ to $\mathcal S(G)$.
Being compactly supported means that $\kappa_x(z)=0$ for $x$ outside a compact of $G$ and any $z\in G$.

\begin{remark}\label{rem:linkSinfty}
The algebra $\mathcal A_0$ is the space of smoothing symbols with compact support in $x$.
   We will recall the definition of the space $S^{-\infty}$ of smoothing symbols introduced in~\cite{FR} at the beginning of Section~\ref{subsec:preliminaries} below. 
   Examples of smoothing symbols include the spectrally defined symbols $f(H(\lambda))$ for any $f\in \mathcal S (\R)$~\cite[Chapter 4]{FR}. 
\end{remark}

As the Fourier transform is injective, it yields a one-to-one correspondence between the symbol~$\sigma$ and the function $\kappa$:
we have $\sigma(x,\lambda)=\mathcal F \kappa_x(\lambda)$ and conversely the Fourier inversion formula \eqref{inversionformula}
yields 
$$
\forall x,z\in G,\;\; \kappa_x(z)= c_0\int_{\widehat G} {\rm Tr} \left( \pi^{\lambda}_ {z} \sigma(x,\lambda)\right) |\lambda|^d d\lambda.$$
The set $\mathcal A_0$ is an algebra for the composition of symbols since 
if $\sigma_1(x,\lambda)=\mathcal F \kappa_{1,x} (\lambda)$
and $\sigma_2(x,\lambda)=\mathcal F \kappa_{2,x} (\lambda)$
are in $\mathcal A_0$, 
then so is $\sigma_1(x,\lambda)\sigma_2(x,\lambda)
=\mathcal F (\kappa_{2,x}*\kappa_{1,x}) (\lambda)$
by \eqref{fourconv}.

\medskip 

 In
 the case of representations of finite dimension,  we  distinguish between all the finite dimensional representations by replacing $\lambda=0$ with the parameters $(0,\omega)$, $\omega\in \mathfrak v^*$. 
 The  operator $\mathcal F \kappa_x(0,\omega)= \sigma(x,(0,\omega))$ then reduces to a complex number since ${\mathcal H}_{(0,\mu)}=\C$.

\subsubsection{Semi-classical pseudodifferential operators} 
Given $\eps>0$, the {\it semi-classical parameter}, that we use to
weigh the oscillations of the functions that we shall consider, 
we quantify the symbols of ${\mathcal A}_0 $ by setting  as in~\cite{FF} (see also \cite{FR, BFG1, Taylor84})

\begin{equation}
	\label{eq_quantization}
	{\rm Op}_\eps(\sigma) f(x)= c_0 \int_{\widehat G} {\rm Tr} \left(  \pi^{\lambda}_{x} \sigma(x,\eps^2\lambda) {\mathcal F} f(\lambda)  \right)|\lambda|^d \,d\lambda,\;\; f\in{\mathcal S}(G).
\end{equation}
The kernel of the operator ${\rm Op}_\eps(\sigma)$ is the function 
$$
G\times G\ni (x,y)\mapsto \kappa^\eps_x(y^{-1} x)
$$ where $\kappa^\eps_x(z)=\eps^{-Q} \kappa_x\left(\delta_{\eps^{-1}} z\right)$ 
and $\kappa_x $, which is such that $\mathcal F(\kappa_x)(\lambda)=\sigma(x,\lambda)$, is called 
the {\it convolution kernel} of $\sigma$. Note that $\eps^2 \lambda$ can be understood as the action on $\lambda$ of the dilation induced on~$\widehat G$ by the dilation $\delta_\eps$ of $G$ (see Remark~3.3 in~\cite{FF}). 

\medskip 

Following~\cite{FF}, 
 the 
action of the symbols in ${\mathcal A_0}$  on $L^2(G)$ is bounded:
$$
\exists C>0, \;\;
\forall \sigma\in {\mathcal A}_0,\;\;
\forall \eps>0,\;\;\| {\rm Op}_\eps(\sigma)\|_{{\mathcal L}(L^2(G))}\leq C \, \int_G \sup_{x\in G} |\kappa_x (z)|dz.
$$

One also has to mention that there exists a symbolic calculus for these operators (see~\cite{FF2}). In this paper, we will mainly use the description of the commutator between the sub-Laplacian and a semi-classical pseudodifferential operator, which comes from the explicit computation and writes: 
for all $\sigma\in{\mathcal A}_0$,
\begin{equation}\label{prop:commutatorDelta}
[ -\eps^2 \Delta_G ,{\rm Op}_\eps (\sigma)] 
=
  {\rm Op}_\eps \left([H(\lambda),\sigma]\right)
\ - \ 
2\varepsilon  \, {\rm Op}_\eps \left(V\cdot \pi^\lambda(V) \sigma\right)
\ - \ 
\eps^2 {\rm Op}_\eps \left(\Delta_G \sigma\right),
\end{equation}
 where $H(\lambda)= {\mathcal F} (-\Delta_G)$ has been defined in~(\ref{def:H}) and~(\ref{def:H2}).

 \subsubsection{The subspace ${\mathcal A}_H$ of ${\mathcal A}_0$}

The Egorov Theorem that we are going to state in the next section is valid for  symbols compactly supported with respect to both the Fourier transform and the spectral decomposition of $H(\lambda)$:

\begin{definition}
\label{def_AH}
A symbol $\sigma\in {\mathcal A}_0$ is in ${\mathcal A}_H$ when it vanishes for $\lambda$ in a neighbourhood  of $\{\lambda=0\}$  and when
its kernel and image contain a finite number of $\mathcal V_n$, 
in the sense that we have
$$
\forall (x,\lambda)\in  G\times (\mathfrak z^*	\setminus\{0\})
\qquad \Pi_n^{(\lambda)} \sigma(x,\lambda) \, \Pi_{n'}^{(\lambda)}= 0,
$$
for all but a finite number of integers $n,n'\in \N$.
\end{definition}

One checks readily that  ${\mathcal A}_H$ is a subalgebra of ${\mathcal A}_0$. 
It is non-trivial since 
it contains for instance all the symbols of the form $a(x) f(H(\lambda)) b(\lambda)$ where $a\in{\mathcal C}_c^\infty(G)$, $f\in {\mathcal C}_c^\infty(\R)$ and where $b\in{\mathcal S} (\mathfrak z^*)$ vanish in a neighbourhood of 0 (see Remark \ref{rem:linkSinfty}); 
for other symbols in $\mathcal A_H$,  see Remark \ref{rem_f(H)}.
Although ${\mathcal A}_H$  cannot be dense in ${\mathcal A}_0$ for the Fr\'echet topology of ${\mathcal A}_0$, 
we will see in  Corollary \ref{cor_cv_AH0} that it satisfies a property of weak density. 
Besides, 
symbols $\sigma\in \mathcal A_H$ can be decomposed in commuting and non-commuting symbols according to the following definition. 

\begin{definition}
With the setting of Defintion \ref{def_AH}, the symbol $\sigma\in {\mathcal A}_H$ is called $H$-{\it diagonal} when 
$\sigma(x,\lambda)=\sum_{n\in\N} \Pi_n^{(\lambda)}\sigma(x,\lambda)\, \Pi_n^{(\lambda)}$  and {\it anti-$H$-diagonal} when $\sigma(x,\lambda)=\sum_{n\not=n'} \Pi_n^{(\lambda)}\sigma(x,\lambda)\, \Pi_{n'}^{(\lambda)}$. We  denote by $\mathcal A_H^{(d)}$ the space of $H$-diagonal symbols.
\end{definition}

\begin{lemma}
\label{lem_sigmad+a}
A symbol  $\sigma\in \mathcal A_H$ is uniquely decomposed as the sum 
$\sigma = \sigma^{(d)} + \sigma^{(a)}$
of  an $H$-diagonal symbol $\sigma^{(d)}	\in \mathcal A_H$ with an anti-$H$-diagonal symbol $\sigma^{(a)}\in \mathcal A_H$.
Furthermore, for each $n,n'\in \N$, the symbol 
$\Pi_n\sigma \Pi_{n'}$ given by  
$$
(\Pi_n\sigma\Pi_{n'})(x,\lambda)
= 
\left\{\begin{array}{ll}
	\Pi_n^{(\lambda)}\sigma(x,\lambda)\, \Pi_{n'}^{(\lambda)} \ \mbox{for}\ (x,\lambda)\in G \times \mathfrak z^*\setminus\{0 \},
	\\
	0 \ \mbox{for}\ x\in G \ \mbox{and} \ \lambda=0,
\end{array}\right.
$$
is in $\mathcal A_H$.
\end{lemma}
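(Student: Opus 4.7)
I would first establish the pointwise decomposition. Set
$$
\sigma^{(d)}(x,\lambda):=\sum_{n\in\N}\Pi_n^{(\lambda)}\sigma(x,\lambda)\Pi_n^{(\lambda)},
\qquad
\sigma^{(a)}(x,\lambda):=\sum_{n\neq n'}\Pi_n^{(\lambda)}\sigma(x,\lambda)\Pi_{n'}^{(\lambda)}.
$$
By the finiteness condition of Definition~\ref{def_AH}, both sums have only finitely many non-zero terms. Combined with $\sum_n\Pi_n^{(\lambda)}=\mathrm{Id}_{\mathcal H_\lambda}$ and the orthogonality $\Pi_n^{(\lambda)}\Pi_{n'}^{(\lambda)}=\delta_{nn'}\Pi_n^{(\lambda)}$, this yields $\sigma=\sigma^{(d)}+\sigma^{(a)}$. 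Uniqueness comes from the observation that a symbol which is simultaneously $H$-diagonal and anti-$H$-diagonal must vanish: sandwiching by $\Pi_m^{(\lambda)}$ both expressions of $\tau=\sigma_1^{(d)}-\sigma_2^{(d)}=\sigma_2^{(a)}-\sigma_1^{(a)}$ produces $\Pi_m^{(\lambda)}\tau\Pi_m^{(\lambda)}=0$ for every $m$, and summing over $m$ gives $\tau=0$.

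Both $\sigma^{(d)}$ and $\sigma^{(a)}$ are then finite sums of blocks of the form $\Pi_n\sigma\Pi_{n'}$, so the lemma follows once each such block is shown to lie in $\mathcal A_H$. The vanishing in a neighbourhood of $\{\lambda=0\}$ and the finiteness condition of Definition~\ref{def_AH} are inherited directly from $\sigma$, so the whole content is to prove $\Pi_n\sigma\Pi_{n'}\in\mathcal A_0$, i.e.\ that its convolution kernel lies in $\mathcal S(G)$ with smooth and compactly supported dependence on $x$.

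For this, I would use the identification $\mathcal H_\lambda\simeq L^2(\R^d)$ of Section~\ref{subsubsec_ONBg}, under which $\Pi_n^{(\lambda)}$ becomes the $\lambda$-independent finite-rank projection onto Hermite functions of total degree $n$. Pick a cut-off $\chi\in C^\infty(\mathfrak z^*)$ vanishing in a neighbourhood of $0$ and equal to $1$ on the $\lambda$-support of $\sigma$, so that $\Pi_n\sigma\Pi_{n'}=(\chi\Pi_n)\,\sigma\,(\chi\Pi_{n'})$. Writing $\sigma(x,\cdot)=\mathcal F\kappa_x$, the convolution rule~\eqref{fourconv} identifies the convolution kernel of $\Pi_n\sigma\Pi_{n'}$ with the triple convolution $\tau_{n'}*\kappa_x*\tau_n$, where $\tau_m$ is the inverse group Fourier transform of the compactly supported finite-rank symbol $\chi(\lambda)\Pi_m^{(\lambda)}$. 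It thus suffices to show $\tau_m\in\mathcal S(G)$: rapid decay in the central variable $z$ follows from repeated integration by parts in $\lambda$ against the phase $e^{i\lambda(z)}$ appearing in the Schr\"odinger realisation of $\pi^\lambda_{(p,q,z)}$, while decay in the horizontal variables $(p,q)$ comes from the Gaussian behaviour of the Hermite matrix coefficients $\langle h_\alpha,\pi^\lambda_{(p,q,z)}h_\beta\rangle$, uniformly over the compact set $\mathrm{supp}\,\chi$. The smooth compactly supported $x$-dependence of $\tau_{n'}*\kappa_x*\tau_n$ is inherited directly from $\kappa_x$.

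The only non-routine point is the Schwartz property of $\tau_m$; it depends crucially on $\chi$ being supported away from $\lambda=0$, so that $|\lambda|$ is bounded both above and below on $\mathrm{supp}\,\chi$, which is precisely what guarantees uniform Gaussian decay of the Hermite matrix coefficients in $(p,q)$ and the applicability of integration by parts in $\lambda$. Everything else reduces to bookkeeping with the convolution rule and the finiteness assumption built into $\mathcal A_H$.
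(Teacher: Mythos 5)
Your decomposition and uniqueness argument coincide with the paper's, but the route you take to the core claim (that each block $\Pi_n\sigma\Pi_{n'}$ lies in $\mathcal A_H$, equivalently has a Schwartz convolution kernel) is genuinely different from the one in the paper. The paper derives Lemma~\ref{lem_sigmad+a} from Corollary~\ref{cor_prop_PiindotS}~(4), which rests on Proposition~\ref{prop_PiindotS}: the projection $\Pi_n$ is shown to be a regular $0$-homogeneous symbol in $\dot S^0$ via the contour-integral formula \eqref{eq_Pin_Cn} for the resolvent of $|\lambda|^{-1}H(\lambda)$ and the auxiliary Lemma~\ref{lem_lambdaindotS} that $|\lambda|I$ is in $\dot S^2$; one then cuts off with $\psi(uH)$ to land in $S^{-\infty}\cap\mathcal A_0$, and uses that $\sigma\in\mathcal A_H$ is supported away from $\lambda=0$ and has finite $(n,n')$-content to recognise that the cutoff is the identity on the relevant spectral range, so that $\psi(uH)\Pi_n\sigma\Pi_{n'}\psi(uH)=\Pi_n\sigma\Pi_{n'}$. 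Your proposal instead computes the convolution kernel of $\Pi_n\sigma\Pi_{n'}$ as a triple convolution $\tau_{n'}\ast\kappa_x\ast\tau_n$ and argues directly that $\tau_m=\mathcal F^{-1}(\chi\,\Pi_m)$ is Schwartz via Hermite matrix-coefficient estimates. What you gain is concreteness (no appeal to the $\dot S^m$ calculus machinery); what the paper's route gains is that it avoids precisely the computation you flag as ``the only non-routine point.''

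That point is where your sketch is thin, and you should be aware that it hides real work. When you integrate by parts in $\lambda$ against the phase $e^{i\lambda(z)}$ to get decay in $z$, the $\lambda$-derivative also falls on the other $\lambda$-dependences in the trace $\mathrm{Tr}\bigl(\pi^\lambda_x\,\chi(\lambda)\Pi_m\bigr)$: the factor $\sqrt{|\lambda|}$, the Jacobian $|\lambda|^d$, and --- more delicately for a general H-type group --- the $\lambda$-dependent basis $(P_j(\lambda),Q_j(\lambda))$ which defines the coordinates $(p,q)$ of a fixed $x$ and underpins the identification $L^2(\mathfrak p_\lambda)\simeq L^2(\R^d)$. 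The Hermite matrix coefficients $\langle\pi^\lambda_x h_\alpha,h_\beta\rangle$ are Gaussian in $(p(\lambda),q(\lambda))$, not a priori in the $\lambda$-independent coordinates $v$ of $x$, so controlling all $\lambda$-derivatives uniformly over the compact annulus $\mathrm{supp}\,\chi$ needs an argument about the smooth $\lambda$-dependence of that change of basis (this is exactly the kind of issue Lemma~\ref{lem_lambdaindotS} handles on the symbol side by controlling $\Delta_q|\lambda|I$). If you are willing to restrict to the Heisenberg group, where the basis is $\lambda$-independent, your argument becomes essentially routine; for general H-type groups you would need to spell this out, whereas the paper's symbolic-calculus route absorbs it once and for all into Proposition~\ref{prop_PiindotS}.
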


Lemma \ref{lem_sigmad+a} will be a consequence of Corollary \ref{cor_prop_PiindotS}, see Remark \ref{rem_pf_lem_sigmad+a}.

\subsection{The Egorov Theorem on H-type groups} 
 For $s\in \R$, we define the flow $\Psi^s$ on $G\times (\mathfrak z^*\setminus\{0\})$ via
\begin{equation}
\label{eq_Psis}	
\Psi^s: 
\left\{\begin{array}{rcl}
	G\times (\mathfrak z^*\setminus\{0\})&\longrightarrow & G\times (\mathfrak z^*\setminus\{0\})\\
(x,\lambda) &\longmapsto &
\left({\rm Exp} (s {\mathcal Z}^{(\lambda)} ) x,\lambda\right)
\end{array}\right.
.
\end{equation}
In particular, this map may be composed with symbols with support in $G\times (\mathfrak z^*\setminus\{0\})$ such as the symbols in $\mathcal A_H$.
This and Lemma \ref{lem_sigmad+a} allows us to define the following action on $\mathcal A_H^{(d)}$:
\begin{equation}
\label{eq_Phis}
\Phi^s: 
\left\{\begin{array}{rcl}
	\mathcal A_H^{(d)} &\longrightarrow & \mathcal A_H^{(d)} \\
\sigma = \sum_{n} \Pi_n \sigma \Pi_{n} &\longmapsto &
\Phi^s(\sigma)= \sum_{n} (\Pi_n \sigma \Pi_{n}) \circ \Psi^{\frac {2n+d}{2|\lambda|} s}
\end{array}\right. , \quad s\in \R.
\end{equation}

\begin{theorem}\label{theo:Egorov}
Let $(\psi^\eps_0)_{\eps>0}$ be a bounded family in $L^2(G)$
and $\psi^\eps(t) = {\rm e}^{i \eps^{2-\tau} t \Delta_G} \psi^\eps_0$ be the solution to  \eqref{eq:schrosc}.
Let $\sigma= \sigma^{(d)} + \sigma^{(a)}\in \mathcal A_H$ be decomposed into $H$-diagonal and anti-$H$-diagonal 
as in Lemma \ref{lem_sigmad+a}.
Let  $\theta\in{\mathcal C}^\infty_c(\R)$.
\begin{itemize}
	 \item[(i)]  For the anti-$H$-diagonal part, we have
$$
\int _{\R} \theta(t) \left( {\rm Op}_\eps (\sigma^{(a)}) \psi^\eps(t),\psi^\eps(t)\right)_{L^2(G)} dt=O(\eps^{\min(\tau,1)}).
$$

\item[(ii)] For the $H$-diagonal part, 
we have the following alternative:
\begin{enumerate}
\item if $\tau\in(0,2)$, 
$$
\int_{\R} \theta'(t) \left({\rm Op}_\eps(\sigma^{(d)} )\psi^\eps(t),\psi^\eps(t)  \right) dt =O(\eps^{\min(1,2-\tau)}),
$$
\item if  $\tau=2$, for all $s\in\R$ (transport)
$$
\int_{\R} \theta(t) \left({\rm Op}_\eps(\sigma^{(d)} )\psi^\eps(t),\psi^\eps(t)  \right) dt=
\int_{\R} \theta(t+s) 
\left({\rm Op}_\eps (\Phi^{-s}(\sigma^{(d)}) ) \psi^\eps(t),\psi^\eps(t)  \right) dt  +O(\eps),
$$
\item if $\tau>2$, for all $s\in\R$ (invariance) 
$$\int_{\R} \theta(t) \left({\rm Op}_\eps(\sigma^{(d)} )\psi^\eps(t),\psi^\eps(t)  \right) dt=
\int_{\R} \theta(t) \left({\rm Op}_\eps(\Phi^{-s}(\sigma^{(d)}) )\psi^\eps(t),\psi^\eps(t)  \right) dt  +O(\eps^{\min(1,\tau-2)}).$$
\end{enumerate}
In Parts (2) and (3), we use the action $\Phi^s$ defined in \eqref{eq_Phis}.
\end{itemize}
\end{theorem}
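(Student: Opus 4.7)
The starting point is the differential identity
$$\partial_t\bigl({\rm Op}_\eps(\sigma)\psi^\eps,\psi^\eps\bigr) = \tfrac{i\eps^{2-\tau}}{2}\bigl([{\rm Op}_\eps(\sigma),\Delta_G]\psi^\eps,\psi^\eps\bigr),$$
into which I substitute \eqref{prop:commutatorDelta} and solve for the $[H(\lambda),\sigma]$ term to obtain, for any $\sigma\in\mathcal A_0$, the \emph{master identity}
\begin{equation*}
\bigl({\rm Op}_\eps([H(\lambda),\sigma])\psi^\eps,\psi^\eps\bigr) = -2i\eps^\tau\partial_t\bigl({\rm Op}_\eps(\sigma)\psi^\eps,\psi^\eps\bigr) + 2\eps\bigl({\rm Op}_\eps(A(\sigma))\psi^\eps,\psi^\eps\bigr) + \eps^2\bigl({\rm Op}_\eps(\Delta_G\sigma)\psi^\eps,\psi^\eps\bigr),
\end{equation*}
with $A(\sigma):=V\cdot\pi^\lambda(V)\sigma$. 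When integrated against $\theta\in C_c^\infty(\R)$, the three right-hand terms yield $O(\eps^\tau)$ (after one integration by parts in $t$), $O(\eps)$, and $O(\eps^2)$ respectively, totalling $O(\eps^{\min(\tau,1)})$. Part (i) follows at once: for $\sigma^{(a)}\in\mathcal A_H$ anti-$H$-diagonal, the block-inverse $\tilde\sigma := \sum_{n\neq n'}(2|\lambda|(n-n'))^{-1}\Pi_n^{(\lambda)}\sigma^{(a)}\Pi_{n'}^{(\lambda)}$ lies in $\mathcal A_H$ (well-defined as $\sigma^{(a)}$ vanishes near $\lambda=0$ and has finitely many blocks) and satisfies $[H(\lambda),\tilde\sigma]=\sigma^{(a)}$; applying the master identity to $\tilde\sigma$ delivers the bound.

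For part (ii)(1), since $[H(\lambda),\sigma^{(d)}]=0$, the differential identity reduces to
$$\partial_t\bigl({\rm Op}_\eps(\sigma^{(d)})\psi^\eps,\psi^\eps\bigr) = -i\eps^{1-\tau}\bigl({\rm Op}_\eps(A(\sigma^{(d)}))\psi^\eps,\psi^\eps\bigr) - \tfrac{i\eps^{2-\tau}}{2}\bigl({\rm Op}_\eps(\Delta_G\sigma^{(d)})\psi^\eps,\psi^\eps\bigr).$$
Because $\pi^\lambda(V_j)$ shifts $\mathcal V_n$ to $\mathcal V_{n\pm 1}$, the symbol $A(\sigma^{(d)})$ is anti-$H$-diagonal and part (i) bounds its integral by $O(\eps^{\min(\tau,1)})$; integrating against $\theta$ and using integration by parts in $t$ yields $\int_\R\theta'(t)\bigl({\rm Op}_\eps(\sigma^{(d)})\psi^\eps,\psi^\eps\bigr)\,dt = O(\eps^{1-\tau+\min(\tau,1)}) + O(\eps^{2-\tau}) = O(\eps^{\min(1,2-\tau)})$.

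Parts (ii)(2)--(ii)(3) require a second iteration of the commutator trick: write $A(\sigma^{(d)})=[H(\lambda),B]$ with $B\in\mathcal A_H$ anti-$H$-diagonal (the same block-inversion as in part (i)) and substitute the master identity applied to $B$ into the above display. After collecting terms, one obtains the key display
\begin{equation*}
\partial_t\bigl({\rm Op}_\eps(\sigma^{(d)}+2\eps B)\psi^\eps,\psi^\eps\bigr) = -\eps^{2-\tau}\bigl({\rm Op}_\eps(\mathcal L\sigma^{(d)})\psi^\eps,\psi^\eps\bigr) + R^\eps(t),
\end{equation*}
where $\mathcal L\sigma^{(d)} := \sum_n\frac{2n+d}{2|\lambda|}\mathcal Z^{(\lambda)}\Pi_n^{(\lambda)}\sigma^{(d)}\Pi_n^{(\lambda)}$ is the infinitesimal generator of $\Phi^s$, and the remainder
$$R^\eps(t) = -2i\eps^{2-\tau}\bigl({\rm Op}_\eps(A(B)^{(a)})\psi^\eps,\psi^\eps\bigr) - i\eps^{3-\tau}\bigl({\rm Op}_\eps(\Delta_G B)\psi^\eps,\psi^\eps\bigr)$$
collects anti-$H$-diagonal contributions whose integrals are $O(\eps^{3-\tau})+O(\eps^{4-\tau})$ by part (i). This display hinges on the algebraic identity
$$2i\,A(B)^{(d)} + \tfrac{i}{2}\Delta_G\sigma^{(d)} = \mathcal L\sigma^{(d)}$$
at the symbol level, which cancels the explicit $\Delta_G\sigma^{(d)}$ term arising from the differential identity. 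Verifying this identity is the \emph{main technical obstacle}; it requires computing the $H$-diagonal part of $A(B)$ in the $P,Q$-basis via the Hermite calculus of Appendix B, with the antisymmetric part (in $j,k$) of $V_k^xV_j^x\sigma^{(d)}$ producing $\mathcal Z^{(\lambda)}\sigma^{(d)}$ through $[P_j,Q_j]=|\lambda|^{-1}\mathcal Z^{(\lambda)}$, and the symmetric part reproducing the $\tfrac{i}{2}\Delta_G\sigma^{(d)}$ correction upon summing the Hermite matrix elements.

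The remaining two statements now follow directly from this display. For (ii)(2) ($\tau=2$), set $F(s) := \int_\R\theta(t+s)\bigl({\rm Op}_\eps(\Phi^{-s}\sigma^{(d)})\psi^\eps(t),\psi^\eps(t)\bigr)\,dt$; using $\partial_s\Phi^{-s}\sigma^{(d)} = -\mathcal L\Phi^{-s}\sigma^{(d)}$, an integration by parts in $t$ converts $\theta'(t+s)$ back into $\partial_t$, and the display applied to $\sigma_s := \Phi^{-s}\sigma^{(d)}$ provides $-\bigl({\rm Op}_\eps(\mathcal L\sigma_s)\psi^\eps,\psi^\eps\bigr) + O(\eps)$ in place of that derivative. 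The two occurrences of $\mathcal L\sigma_s$ then cancel, so $F'(s) = O(\eps)$ and $F(0) = F(s) + O(\eps)$, which is the transport formula. For (ii)(3) ($\tau>2$), integrate the display against $\theta$ in $t$ and solve for $\int\theta\bigl({\rm Op}_\eps(\mathcal L\sigma_s)\psi^\eps,\psi^\eps\bigr)dt$: the $\eps^{\tau-2}$ amplification turns the $O(1)$ boundary term into $O(\eps^{\tau-2})$ and the remainder contributions into $O(\eps)+O(\eps^2)$, for a total of $O(\eps^{\min(1,\tau-2)})$; integrating $G'(s) = -\int\theta\bigl({\rm Op}_\eps(\mathcal L\Phi^{-s}\sigma^{(d)})\psi^\eps,\psi^\eps\bigr)dt$ from $0$ to $s$ then yields the invariance statement.
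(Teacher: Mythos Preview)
Your proof is correct and follows essentially the same route as the paper: the master identity from \eqref{prop:commutatorDelta}, block-inversion of the anti-diagonal part for (i), a second iteration of the commutator trick for (ii), and the flow argument in $s$ for (ii)(2)--(3). The paper organizes the second iteration via an explicit auxiliary symbol $\sigma_1 = \frac{-1}{2i|\lambda|}\sum_j(P_j\pi^\lambda(Q_j) - Q_j\pi^\lambda(P_j))\sigma$ (your block-inverse $B$, up to sign), and the algebraic identity you flag as the main technical obstacle is exactly Lemma~\ref{lem_sigma_comH}(3), proved in Appendix~B via the Hermite calculus you describe.
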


It may appear  unusual to have an Egorov Theorem holding in the space of distributions in the time variable. However, it is already the case in the Euclidean case when one works with the propagator of a Schr\"odinger operator with matrix-valued potential $-{\eps^2\over 2} \Delta \, {\rm Id}  + V(x)$ with $V$ matrix-valued (see~\cite{GMMP,FL1,FL2}). 
The proof of this Theorem is postponed until Section~\ref{sec:proofs}.

\subsection{Time averaged semi-classical measures and the quantum limits}\label{sec:semiclas}

We now want to pass to the limit $\eps\rightarrow 0$ in expressions of the form~(\ref{eq:egorov}) and to identify the limiting objects, together with their properties.  
For this purpose, we follow \cite[Section 5]{FF} with slightly different notation and introduce the following vocabulary for operator valued measures:

\begin{definition}
\label{def_gammaGamma}
	Let $Z$ be a complete separable metric space, 
	and let $\xi\mapsto {\mathcal H}_\xi$ a measurable field of complex Hilbert spaces of $Z$.
\begin{itemize}
\item 
	The set 
	$ \widetilde{\mathcal M}_{ov}(Z,({\mathcal H}_\xi)_{\xi\in Z})$
	is the set of pairs $(\gamma,\Gamma)$ where $\gamma$ is a positive Radon measure on~$Z$ 
	and $\Gamma=\{\Gamma(\xi)\in {\mathcal L}({\mathcal H}_\xi):\xi \in Z\}$ is a measurable field of trace-class operators
such that
$$\|\Gamma d \gamma\|_{\mathcal M}:=\int_Z{\rm Tr}_{{\mathcal H}_\xi} |\Gamma(\xi)|d\gamma(\xi)
<\infty.$$

 \item 
	Two pairs $(\gamma,\Gamma)$ and $(\gamma',\Gamma')$ 
in $\widetilde {\mathcal M}_{ov}(Z,({\mathcal H}_\xi)_{\xi\in Z})$
are {equivalent} when there exists a measurable function $f:Z\to \mathbb C\setminus\{0\}$ such that 
$$d\gamma'(\xi) =f(\xi)  d\gamma(\xi)\;\;{\rm  and} \;\;\Gamma'(\xi)=\frac 1 {f(\xi)} \Gamma(\xi)$$ for $\gamma$-almost every $\xi\in Z$.
The equivalence class of $(\gamma,\Gamma)$ is denoted by $\Gamma d \gamma$,
and the resulting quotient set is 
 denoted by ${\mathcal M}_{ov}(Z,({\mathcal H}_\xi)_{\xi\in Z})$.

\item 
A pair $(\gamma,\Gamma)$ 
in $ \widetilde {\mathcal M}_{ov}(Z,({\mathcal H}_\xi)_{\xi\in Z})$
 is {positive} when 
$\Gamma(\xi)\geq 0$ for $\gamma$-almost all $\xi\in Z$.
In  this case, we may write  $(\gamma,\Gamma)\in  \widetilde {\mathcal M}_{ov}^+(Z,({\mathcal H}_\xi)_{\xi\in Z})$, 
and $\Gamma d\gamma \geq 0$ for $\Gamma d\gamma \in {\mathcal M}_{ov}^+(Z,({\mathcal H}_\xi)_{\xi\in Z})$.
\end{itemize}
\end{definition}

By convention and if not otherwise specified,  a representative of the class $\Gamma d\gamma$ is chosen such that ${\rm Tr}_{{\mathcal H}_\xi} \Gamma=1$. 
In particular, if ${\mathcal H}_\xi$ is $1$-dimensional, $\Gamma=1$ and  $\Gamma d\gamma$ reduces to the measure $d\gamma$.
One checks readily that $\mathcal M_{ov} (Z,({\mathcal H}_\xi)_{\xi\in Z})$ equipped with the norm $\| \cdot\|_{{\mathcal M}}$ is a Banach space.

\medskip 

When the field of Hilbert spaces is clear from the setting, 
we may write 
$$
\mathcal M_{ov} (Z) = \mathcal M (Z,({\mathcal H}_\xi)_{\xi\in Z}),
\quad
\mbox{and}\quad
\mathcal M_{ov}^+ (Z) = \mathcal M^+ (Z,({\mathcal H}_\xi)_{\xi\in Z}),
$$
for short.
For instance, if $\xi\mapsto {\mathcal H}_\xi$ is given by $\mathcal H_\xi=\mathbb C$ for all $\xi$, 
then $\mathcal M (Z)$ coincides with the space of finite Radon measures on $Z$.
Another example is when $Z$ is of the form $Z=Z_1 \times \widehat G$ where $Z_1$ is a complete separable metric space, and $\mathcal H_{(z_1,\lambda)}= {\mathcal H}_\lambda$, where
 the Hilbert space ${\mathcal H}_\lambda$ is associated with the representation of $\lambda \in \widehat G$
(that is, using the description in \eqref{eq_widehatG},
${\mathcal H}_\lambda$ is equivalent to $L^2(\mathfrak p_\lambda)$ if the representation corresponds to $\lambda \in \mathfrak z^*\setminus\{0\}$ and ${\mathcal H}_{(0,\omega)}=\C$ if $\lambda=0$ and the representation corresponds to $(0,\omega)$ with $\omega\in{\mathfrak v}^*$). 

\medskip

We will often consider measurable bounded maps of the time variable, valued in the space of measures that are positive as scalar-valued or operator-valued measures:
\begin{definition}
\label{def_LinftyRX+}	
If $X$ denotes the Banach space $\mathcal M(Z)$ or more generally 
$\mathcal M_{ov}(Z)$  as in  Definition \ref{def_gammaGamma}, then 
 $L^\infty (\R, X^+)$ denotes the space of maps of $t\in \R$ and valued in $X$ in $L^\infty(\R,X)$ with positive values for almost every $t\in \R$. 
\end{definition}

\subsubsection{Time-averaged semi-classical measures}
\label{subsec_timeav_scm}

With a bounded family $(u^\eps)_{\eps>0}$ in $L^\infty(\R, L^2(G))$, we associate  the quantities 
 \begin{equation}
 \label{def:leps}
\ell_\eps(\theta, \sigma)=\int_{\R} \theta(t)  \left({\rm Op}_\eps (\sigma) u^\eps(t),u^\eps(t)\right)_{L^2(G)} dt,\;\;\sigma\in \mathcal A_0,\;\;\theta\in L^1(\R),
\end{equation}
the limits of which are characterized by a map in $L^\infty(\R,{\mathcal M}_{ov}^+(G\times \widehat G))$.

\begin{theorem}\label{theo:mesures}
Let $(u^\eps)_{\eps>0}$ be a bounded family in $L^\infty(\R,L^2(G))$.
There exist a sequence $(\eps_k)_{k\in \mathbb N}$ in $(0,+\infty)$ with  $\eps_k\Tend{k}{+\infty}0$
and a  map  $t\mapsto \Gamma_t d\gamma_t $ in
$L^\infty(\R, {\mathcal M}_{ov}^+(G\times \widehat G))$
 such that we have for all $ \theta\in L^1(\R)$ and $\sigma\in {\mathcal A}$,
$$
\int_{\R} \theta(t) \left({\rm Op}_{\eps_k} (\sigma) u^{\eps_k}(t),u^{\eps_k}(t)\right)_{L^2(G)}dt 
\Tend {k}{+\infty} 
\int_{\R\times G\times \widehat G} \theta(t){\rm Tr}\left(\sigma(x,\lambda) \Gamma_t(x,\lambda)\right)d\gamma_t(x,\lambda) dt.
$$
Given the sequence $(\eps_k)_{k\in \mathbb N}$, 
the map $t\mapsto \Gamma_t d\gamma_t$  
is unique up to equivalence. Besides,
$$
\int_{\R}\int_{G\times \widehat G} 
{\rm Tr}\left( \Gamma_t(x,\lambda) \right) d\gamma_t(x,\lambda) \, dt \leq  \limsup_{\eps\rightarrow 0}\| u^\eps\|_{L^\infty(\R,L^2(G))}.
$$
\end{theorem}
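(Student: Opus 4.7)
The proof follows closely the time-independent argument of [FF, Section 5], with an additional disintegration step in the time variable. First, the functionals $\ell_\eps$ defined in \eqref{def:leps} are uniformly bounded: using the operator norm estimate $\|{\rm Op}_\eps(\sigma)\|_{\mathcal L(L^2(G))} \leq C \int_G \sup_{x\in G} |\kappa_x(z)|\, dz$ recalled in the preceding subsection, one has
$$
|\ell_\eps(\theta,\sigma)|
\leq C\,\|\theta\|_{L^1(\R)}\,\bigl(\sup_{\eps'>0}\|u^{\eps'}\|_{L^\infty(\R,L^2(G))}\bigr)^2\int_G\sup_{x\in G}|\kappa_x(z)|\,dz.
$$
Since $\mathcal A_0$, equipped with this seminorm, and $L^1(\R)$ are both separable, a standard diagonal extraction yields a subsequence $\eps_k\to 0$ along which $\ell_{\eps_k}(\theta,\sigma)$ converges to a continuous bilinear form $\ell(\theta,\sigma)$ on $L^1(\R)\times \mathcal A_0$.

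Next I would identify $\ell$ with an element of $L^\infty(\R,\mathcal M_{ov}^+(G\times \widehat G))$. The essential ingredient is positivity: if $\sigma(x,\lambda)\geq 0$ as an operator on $\mathcal H_\lambda$ for every $(x,\lambda)$, then a sharp G\aa rding-type inequality adapted to the H-type pseudodifferential calculus of [FF,FF2] gives ${\rm Op}_\eps(\sigma)\geq -C\eps\,{\rm Id}$ on $L^2(G)$, hence $\ell(\theta,\sigma)\geq 0$ for $\theta\geq 0$. One then invokes the noncommutative Riesz-type representation of [FF, Section 5], which identifies a positive continuous functional on (a suitable completion of) $L^1(\R)\otimes\mathcal A_0$ with a positive Radon measure $d\mu$ on $\R\times G\times \widehat G$ together with a measurable field of positive trace-class operators $\Gamma$ normalised by ${\rm Tr}\,\Gamma=1$, giving
$$
\ell(\theta,\sigma)=\int_{\R\times G\times\widehat G}\theta(t)\,{\rm Tr}\bigl(\sigma(x,\lambda)\,\Gamma(t,x,\lambda)\bigr)\,d\mu(t,x,\lambda).
$$

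Finally, the linearity of $\ell$ in $\theta\in L^1(\R)$ together with the bound $|\ell(\theta,\sigma)|\leq C_\sigma\|\theta\|_{L^1}$ forces $d\mu$ to be absolutely continuous with respect to $dt$ with essentially bounded Radon--Nikodym derivative; disintegration then yields $d\mu=(\Gamma_t d\gamma_t)\otimes dt$ with $t\mapsto \Gamma_t d\gamma_t$ in $L^\infty(\R,\mathcal M_{ov}^+(G\times\widehat G))$. Uniqueness up to the equivalence of Definition~\ref{def_gammaGamma} is then immediate since the test elements $\theta\otimes\sigma$ separate such equivalence classes. The total mass estimate follows by choosing $\sigma$ running through an approximation of the identity localized in both $x$ and $\lambda$, and passing to the limit in the uniform bound of the first paragraph. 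The main obstacle is the representation step: because $\widehat G$ is not a classical Hausdorff space and symbols in $\mathcal A_0$ take values in (infinite-dimensional) operators, scalar Riesz is unavailable and one must lift the positive form to the von Neumann algebraic framework of [FF, Section 5], verifying simultaneously that $\mathcal A_0$ is rich enough (e.g.\ by density of products $a(x)b(\lambda)$) to detect all positive trace-class fields on $G\times \widehat G$.
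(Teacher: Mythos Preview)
Your outline is essentially correct and close in spirit to the paper's argument, but the order of the two key steps is reversed, and this matters. You first represent $\ell$ as an operator-valued measure on $\R\times G\times\widehat G$ and then disintegrate in $t$; the paper instead disintegrates in $t$ \emph{first}, using only $(L^1(\R))^*=L^\infty(\R)$ to write $\ell(\theta,\sigma)=\int_\R \theta(t)\,\ell_\sigma(t)\,dt$ with $\ell_\sigma\in L^\infty(\R)$, and \emph{then} for almost every fixed $t$ identifies the state $\sigma\mapsto \ell_\sigma(t)$ of the $C^*$-algebra $\mathcal A$ with an element $\Gamma_t\,d\gamma_t$ of $\mathcal M_{ov}^+(G\times\widehat G)$ via Proposition~\ref{prop:states} and Corollary~\ref{cor1_prop:states}. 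The paper explicitly comments (Section~3.3) that your order yields a priori only a measure on $\R\times G\times\widehat G$, and that the absolute continuity in $t$ then has to be recovered a posteriori; going $t$-first avoids this detour and gives the $L^\infty(\R,\mathcal M_{ov}^+)$ structure directly.

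There is also one point where your argument is incomplete: the bound you use, $\|{\rm Op}_\eps(\sigma)\|\le C\int_G\sup_x|\kappa_x(z)|\,dz$, controls $\ell_\eps$ in the kernel seminorm, not in $\|\sigma\|_{\mathcal A}$. That is enough for the diagonal extraction, but the Riesz-type representation you invoke requires continuity of the limit $\ell$ on $\mathcal A$ itself, i.e.\ the estimate $|\ell(\theta,\sigma)|\le \|\theta\|_{L^1}\|\sigma\|_{\mathcal A}$. The paper obtains this (equation~\eqref{eq_cor_ellL1}) by first checking it on product symbols $\tau(x,\lambda)=a(x)b(\lambda)$ via the elementary bound~\eqref{simplebound} and then extending by density of their span in $\mathcal A$; you allude to this density at the very end but do not connect it to the required $\|\cdot\|_{\mathcal A}$-continuity. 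The positivity step via a G{\aa}rding-type inequality is correct and is indeed what underlies the paper's terse assertion that $\ell$ satisfies~\eqref{eq_cor_ellpos}.
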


We call
 the map $t\mapsto\Gamma_t d\gamma_t$  satisfying Theorem~\ref{theo:mesures} (for some subsequence $\eps_k$) a {\it time-averaged semi-classical measure} of the family $(u^\eps(t))$. 
 Note that we have not assumed any estimate of the form~(\ref{Hscriterium}) on the family $u^\eps$ in order to define its time averaged semi-classical measure; 
 such additional property will however be useful to determine the limits of the time-averaged densities associated with $u^\eps$ in terms of time-averaged semi-classical measures, as we shall see in Section~\ref{sec:densitylimit}.

   \begin{remark}
   \begin{enumerate}
\item   Note that this result can be generalised to any graded Lie group:
for any bounded family
$(u^\eps)_{\eps>0}$ in $L^\infty(\R,L^2(G))$,
there exist a sequence $(\eps_k)_{k\in \mathbb N}$ in $(0,+\infty)$ with  $\eps_k\Tend{k}{+\infty}0$
and a map   $t\mapsto \Gamma_t d\gamma_t$ in $L^\infty(\R, {\mathcal M}_{ov}^+(G\times \widehat G))$
 such that we have
 $$ 
 \int_{\R}\theta(t) \left({\rm Op}_{\eps_k} (\sigma) u^{\eps_k}(t),u^{\eps_k}(t)\right)_{L^2(G)}dt\Tend {k}{+\infty} \int_{\R\times G\times \widehat G} \theta(t){\rm Tr}\left(\sigma(x,\pi) \Gamma_t(x,\pi)\right)d\gamma_t(x,\pi)dt,
 $$
  for every $ \theta\in{\mathcal C}_c^\infty(\R)$ and $\sigma\in \mathcal A_0$.
\item In the case of this article where $G$ is H-type, the special structure of $\widehat G$ implies that $\Gamma_t d\gamma_t$ consists of two pieces, one localized above $\lambda\in\mathfrak z^*\setminus\{0\}$ and another one which is scalar above~$\mathfrak v^*$, see \eqref{eq_widehatG}.
\end{enumerate}
   \end{remark}

\subsubsection{Semi-classical measures and Schr\"odinger equation}\label{sec:enoncetheo}
Our main theorem regarding semi-classical measures of solutions to the Schr\"odinger equation is the following:

\begin{theorem}
\label{theo:schro} 
Let  $(\psi_0^\eps)_{\eps>0}$ be a 
 bounded family in $L^2(G)$ 
 and $\psi^\eps(t) = {\rm e}^{i \eps^{2-\tau} t \Delta_G} \psi^\eps_0$ be the solution to  \eqref{eq:schrosc}.
Then any semi-classical measure 
$t\mapsto \Gamma_t d\gamma_t\in L^\infty(\R, {\mathcal M}_{ov}^+(G\times \widehat G))$
as in Theorem \ref{theo:mesures} for the family $u^\eps (t) =\psi^\eps(t)$, 
satisfies the following additional properties:
\begin{itemize}
\item[(i)] 
For almost every $(t,x,\lambda)\in \R\times G\times \widehat G$, the operator $\Gamma_t(x,\lambda)$ commutes with $\widehat \Delta_G(\lambda)=H(\lambda)$:
\begin{equation}\label{eq:decomp}
\Gamma_t(x,\lambda)=\sum_{n\in\N } \Gamma_{n,t}(x,\lambda)\;\;{ with}\;\; \Gamma_{n,t}(x,\lambda):= \Pi_n^{(\lambda)}\Gamma_t(x,\lambda) \Pi_n^{(\lambda)},
\end{equation}
where $\Pi_n$ is the homogeneous symbol given by the spectral projection of $H(\lambda)$ for the eigenvalue $|\lambda|(2n+d)$
(see Section~\ref{freq}).
\item[(ii)] 
For each $n\in \N$,
the map $(t,x,\lambda)\mapsto \Gamma_{n,t}(x,\lambda) d\gamma_t(x,\lambda)$ defines
 a distribution on $\R\times G\times (\mathfrak z^*\setminus\{0\})$ valued in the finite dimensional space ${\mathcal L}(\mathcal V_n)$ which 
satisfies the following alternatives:
 \begin{enumerate}
\item if $\tau\in (0,2)$, 
$$
\partial_t\left(\Gamma_{n,t}(x,\lambda) d\gamma_t(x,\lambda)\right)=0,
$$
\item if $\tau=2$, 
$$
\left(\partial_t -{2n+d\over 2|\lambda|}  {\mathcal Z}^{(\lambda)}\right)\left(\Gamma_{n,t}(x,\lambda)d\gamma_t(x,\lambda)\right)=0
$$
where ${\mathcal Z}^{(\lambda)}\in {\mathfrak z}$ is the vector corresponding to $\lambda$ (see Section \ref{subsubsec_ONBg}),
\item if $\tau>2$, then the distribution
$\Gamma_{n,t} d\gamma_t$ is invariant under the flow of the vector field  $\mathcal Z^{(\lambda)}$ and thus is equal to~$0$. 
\end{enumerate}

\item[(iii)] Above $\lambda=0$, 
the map $t \mapsto \varsigma_t $ in $L^\infty (\R, \mathcal M^+ (G\times \mathfrak v^*))$
defined via
$$
d\varsigma_t(x,\omega)= \Gamma_t(x,(0,\omega))d\gamma_t(x,(0,\omega)){\bf 1}_{\lambda=0},
$$ 
 satisfies the following alternatives:
\begin{enumerate}
\item if $\tau\in(0,1)$, the map
$t\mapsto \varsigma_t$ is constant from $\R$ to $\mathcal M^+ (G\times \mathfrak v^*)$,
\item if $\tau=1$, 
then the map
$t\mapsto \varsigma_t$ is weakly continuous from $\R$ to $\mathcal M (G\times \mathfrak v^*)$,
and for all $t\in \R$
$$
\varsigma_t (x,\omega) = 
\varsigma_0 \left({\rm Exp} (t\,  \omega\cdot  V ) x,\omega\right), 
$$
where $\omega\cdot V= \sum_{j=1}^d \omega_j V_j \in \mathfrak g$,
\item if $\tau>1$, 
the measures
$\varsigma_t$  are invariant under the flow of the vector field $\omega\cdot V$ and thus
 supported on $G\times \{\omega=0\}$. 
\end{enumerate}
\end{itemize}
\end{theorem}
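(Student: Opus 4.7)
My plan is to derive Theorem \ref{theo:schro} by combining Theorem \ref{theo:Egorov} with the defining convergence of the time-averaged semi-classical measure (Theorem \ref{theo:mesures}), passing to the limit in the Egorov identities and extracting the infinitesimal transport statements by differentiation and integration by parts. Parts (i) and (ii) fall directly out of the Egorov statement by testing against anti-$H$-diagonal and $H$-diagonal symbols from $\mathcal{A}_H$ respectively, while (iii) requires a separate direct argument from the commutator identity \eqref{prop:commutatorDelta} since $\mathcal{A}_H$ excludes support at $\lambda=0$.

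For (i), I would test Theorem \ref{theo:Egorov}(i) against arbitrary anti-$H$-diagonal $\sigma^{(a)}\in\mathcal{A}_H$ and $\theta\in C_c^\infty(\R)$; passing to the limit along the subsequence $(\eps_k)$ via Theorem \ref{theo:mesures}, the right-hand side $\int\theta(t)\int\mathrm{Tr}(\sigma^{(a)}\Gamma_t)d\gamma_t\,dt$ must vanish. Varying $\theta$ and $\sigma^{(a)}$, and invoking the weak density of $\mathcal{A}_H$ in $\mathcal{A}_0$ (Corollary \ref{cor_cv_AH0}) on the fibre $\lambda\ne 0$, I conclude that $\Pi_n^{(\lambda)}\Gamma_t(x,\lambda)\Pi_{n'}^{(\lambda)}=0$ for $n\ne n'$ almost everywhere, establishing \eqref{eq:decomp}. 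For (ii), I would apply each of the three sub-cases of Theorem \ref{theo:Egorov}(ii) to $H$-diagonal $\sigma^{(d)}\in\mathcal{A}_H^{(d)}$: case (1) gives $\int\theta'(t)\mathrm{Tr}(\sigma^{(d)}\Gamma_{n,t})d\gamma_t\,dt=0$, i.e.\ $\partial_t(\Gamma_{n,t}d\gamma_t)=0$; case (2) passes to the identity $\int\theta(t)\langle\sigma^{(d)},\Gamma_t\rangle d\gamma_t\,dt = \int\theta(t+s)\langle\Phi^{-s}\sigma^{(d)},\Gamma_t\rangle d\gamma_t\,dt$ for every $s\in\R$, which I would differentiate in $s$ at $s=0$, compute $\partial_s|_{s=0}\Phi^{-s}(\sigma^{(d)})=-\sum_n\frac{2n+d}{2|\lambda|}\mathcal{Z}^{(\lambda)}(\Pi_n\sigma^{(d)}\Pi_n)$ from \eqref{eq_Phis}, and transfer the left-invariant vector field $\mathcal{Z}^{(\lambda)}$ onto $\Gamma_{n,t}d\gamma_t$ by integration by parts in $x$ to obtain the stated transport equation; case (3) gives invariance of $\Gamma_{n,t}d\gamma_t$ under the one-parameter flow of $\mathcal{Z}^{(\lambda)}$, and since this nonzero central vector field generates infinite orbits in $G$, any finite Radon measure invariant under its translations must vanish.

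For (iii), I would argue directly from \eqref{prop:commutatorDelta}. Using \eqref{eq:schrosc}, for any $\sigma\in\mathcal{A}_0$,
\begin{equation*}
\tfrac{d}{dt}(\mathrm{Op}_\eps(\sigma)\psi^\eps,\psi^\eps) = \tfrac{i}{2}\eps^{-\tau}(\mathrm{Op}_\eps([H(\lambda),\sigma])\psi^\eps,\psi^\eps) - i\eps^{1-\tau}(\mathrm{Op}_\eps(V\cdot\pi^\lambda(V)\sigma)\psi^\eps,\psi^\eps) + O(\eps^{2-\tau}).
\end{equation*}
At the representations $\pi^{(0,\omega)}$ one has $[H(0,\omega),\sigma]=[|\omega|^2,\sigma]=0$ and $V\cdot\pi^{(0,\omega)}(V)\sigma = i(\omega\cdot V)\sigma$, so on the $\lambda=0$ fibre the commutator term drops out and the evolution reduces to a scalar transport at rate $\eps^{1-\tau}$ by the left-invariant vector field $\omega\cdot V$ on $G$. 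Multiplying by $\theta$, integrating by parts in $t$, passing to the limit along $(\eps_k)$, and separating the $\lambda\ne 0$ and $\lambda=0$ contributions of $\Gamma_t d\gamma_t$ (the $\lambda\ne 0$ commutator contribution vanishes since $[H,\sigma]$ is anti-$H$-diagonal and $\Gamma_t$ is $H$-diagonal by part~(i)), I obtain: for $\tau\in(0,1)$ the transport is $O(\eps^{1-\tau})\to 0$, yielding $\partial_t\varsigma_t=0$; for $\tau=1$ the limit is the distributional transport equation for $\varsigma_t$ along $\omega\cdot V$, whose unique weak solution with the identified initial datum is the displayed translation formula (which also implies the weak continuity in $t$); for $\tau>1$, dividing by $\eps^{1-\tau}$ before taking the limit forces $\varsigma_t$ to be invariant under the $\omega\cdot V$-flow for every $\omega\in\mathfrak v^*$, so as a finite Radon measure it must be concentrated on $\{\omega=0\}$.

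The most delicate step I expect is the singular $\eps^{-\tau}$ commutator term in part (iii): although $[H,\sigma](x,(0,\omega))=0$, it survives off the $\lambda=0$ fibre, and controlling its contribution in the limit requires using part (i) to ensure that the pairing $\mathrm{Tr}([H,\sigma]\Gamma_t)$ vanishes at a rate beating $\eps^\tau$, essentially reusing the content of Theorem \ref{theo:Egorov}(i) uniformly in $\sigma\in\mathcal{A}_0$ by a cutoff/density argument. A close second is the differentiation in $s$ of the $\tau=2$ Egorov identity in (ii), which requires the $O(\eps)$ remainder to be locally uniform in $s$; this should follow by inspecting the constants in the proof of Theorem \ref{theo:Egorov}.
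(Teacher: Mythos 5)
Your plan for Parts (i) and (ii) is essentially the same as the paper's: pass to the limit in the commutator identity/Egorov statement and identify the transport operators on the measure. Two small differences: the paper derives Part (i) directly from the rate estimate \eqref{eq:commutator} applied to the approximating symbols $\sigma^{(n,n',u)}$ rather than by invoking Theorem~\ref{theo:Egorov}(i) as a black box; and for Part~(ii)~(2), the paper obtains the infinitesimal transport equation directly in the limit (via the exact identity~\eqref{keyline} and the quantity $j_\eps$ built around the corrector symbol $\sigma_1$ of Lemma~\ref{lem_sigma_comH}) rather than by differentiating the finite-$s$ Egorov identity at $s=0$. Your differentiation route is viable in principle, and your concern about uniformity of the $O(\eps)$ remainder in $s$ is the right thing to check, but this extra verification is avoided by the paper's direct limiting argument.

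There is, however, a genuine gap in your treatment of Part~(iii). You test against general $\sigma\in\mathcal{A}_0$ and then face the $\eps^{-\tau}$ commutator term $\eps^{-\tau}\ell_\eps(\theta,[\sigma,H])$, which you propose to control by "reusing Theorem~\ref{theo:Egorov}(i) uniformly via a cutoff/density argument." This will not work: Theorem~\ref{theo:Egorov}(i) only yields $\ell_\eps(\theta,[\sigma,H])=O(\eps^{\min(\tau,1)})$, which after division by $\eps^\tau$ gives $O(\eps^{\min(0,1-\tau)})=O(1)$ whenever $\tau\leq 1$ — precisely the range you need. Part~(i) tells you the limiting pairing $\mathrm{Tr}([H,\sigma]\Gamma_t)\,d\gamma_t$ vanishes but gives no rate, and plugging the commutator identity back in only produces a tautology. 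The paper's resolution is simpler and is the ingredient you are missing: test only with symbols $\sigma\in\mathcal{A}_0\cap\mathcal{B}$ that commute with $H(\lambda)$ identically, so the singular commutator term is not merely small in the limit — it is exactly zero for every $\eps$. For this restriction to still characterise $\varsigma_t$ you need the surjectivity of the restriction $\sigma\mapsto\sigma|_{G\times\{\lambda=0\}}$ from the $C^*$-algebra $\mathcal{B}$ onto $C_0(G\times\mathfrak v^*)$, which is Lemma~\ref{lem_sigma_rest}. Without this lemma the argument is incomplete. Finally, note that for $\tau\in(0,1]$ the weak continuity of $t\mapsto\varsigma_t$ — which you use to integrate the transport equation — does not come for free from uniqueness; the paper establishes it separately in Proposition~\ref{prop_tau01} via an Arzel\`a--Ascoli argument using the $\mathcal{B}$-state structure of the measure.
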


The existence of the semi-classical measure $\Gamma_t d\gamma_t$ follows from Theorem \ref{theo:mesures}, 
while  its additional properties come from the fact that $\psi^\eps(t)$ solves the Schr\"odinger equation. 
Point (i) of Theorem~\ref{theo:mesures} is a consequence of (i) of Theorem~\ref{theo:Egorov}. 
It will then appear that we will need to only use symbols which commute with $H(\lambda)$.

\medskip 

Before closing this section, we discuss why  the invariance of a semi-classical measure by vector fields imply that it is~$0$ (as in (3) of (ii)) or that its support has special properties (as in (3) of (iii)).
\begin{proof}[Proof of Part (3) in (ii)  and Part (3) in (iii)]The invariance properties have consequences that have been already studied in the Euclidean case in \cite[Lemma~3.6]{CFM}.  We adapt them to the setting of (3) of (ii) in the following way. 
First let $N_{\widehat G} :\widehat G \to [0,+\infty)$ be defined via
$$
N_{\widehat G}
(\lambda):=\left\{\begin{array}{l}
\sqrt{|\lambda|}
\;\;  \mbox{if}\;\;  
\lambda\not=0\\
|\omega|
\;\;\mbox{if}\;\; \lambda=(0,\omega),\;\;\omega\in{\mathfrak v}^*;
\end{array}\right.
$$
by \cite[Section 2.3]{FF}, it is continuous.
We also define the usual quasi-norm on $G$ via 
$$
|{\rm Exp} (V+Z)|=(|X|^4+|Z|^2)^{1/4}, 
\qquad
 \ \mbox{with} \ 
V\in {\mathfrak v} \ \mbox{and}\  Z\in{\mathfrak z}.
$$
We can now define the continuous function $N:G\times \widehat G \to [0,+\infty)$ with
$$
N(x,\lambda) =  (|x|^4+N_{\widehat G}
(\lambda)^4)^{1/4}.
$$
Let $x={\rm Exp} (V+Z)\in G $ with $V\in {\mathfrak v}$ and $z\in{\mathfrak z}$. In view of 
$${\rm Exp}(s\,  {\mathcal Z}^{(\lambda)} )x=  {\rm Exp} (V+Z+s{\mathcal Z}^{(\lambda)})$$
we deduce 
$$
|{\rm Exp}(s\,  {\mathcal Z}^{(\lambda)} )x|^4 = |V|^4+ |Z+s{\mathcal Z}^{(\lambda)}|^2 + N_{\widehat G}
(\lambda)^4.
$$ As a consequence, if
 $K$ is any compact subset of $G\times ({\mathfrak z}^*\setminus\{0\})$, then there exist constants $\alpha_1,\beta_1,\alpha_2,\beta_2,s_0>0$ such that $$
\forall s\geq s_0, \qquad
\forall (x,\lambda)\in K\qquad
\alpha_1 |s|^{1/2} -\beta_1 < |{\rm Exp}(s\,  {\mathcal Z}^{(\lambda)} )x | < \alpha_2|s|^{1/2}+\beta_2,$$
which is enough for the proof of  Lemma~3.6 in~\cite{CFM}. The measure  ${\rm Tr} (\Gamma_t) d\gamma_t$ which is invariant under the flow $\Psi^s$ is $0$ above~$K$. \\
A similar argument can be performed for (3) of (iii) since the only invariant set  by the action of~$\Xi^s$  is 
the set $G\times \{0\}$
and
 if $K$ is a compact subset of $G\times {\mathfrak v}^*$  such that $K \cap (G\times \{0\})=\emptyset$, then there exists $\alpha_1,\beta_1,\alpha_2,\beta_2,s_0 >0$ such that  for $s\geq _0$,
$$\alpha_2 |s| -\beta_2 < |{\rm Exp}(s\,  \omega\cdot V )x ) | < \alpha_2|s|+\beta_2.$$
Therefore, the measure $\varsigma_t(x,\omega)$ is supported on $G\times \{\omega=0\}$.
\end{proof}


\section{The $C^*$-algebra ${\mathcal A}$ associated with semi-classical symbols} \label{sec:3.1}

In this section we introduce the $C^*$-algebra formalism which can be associated with semi-classical symbols. The properties of this algebra, introduced in Section~\ref{sec:defA} are at the roots of our analysis and allow us to prove Theorem~\ref{theo:mesures} in Section~\ref{sec:prooftheomesures}. 
The proofs of Theorems \ref{theo:Egorov} and \ref{theo:schro} will use several ingredients. 
First,  it 
requires the analysis of the symbolic properties of the eigenprojectors~$\Pi_n^{(\lambda)}$ performed in Section~\ref{subsec:preliminaries}. 
Then,
in order to pass to the limits in the relations of the Egorov theorem~\ref{theo:Egorov}, we will need to approximate general symbols in ${\mathcal A}_0$ by symbols  belonging to the class ${\mathcal A}_H$, which is done in Section~\ref{subsec_approxPinproj}.
Finally, it will use symbols that commute with~$H$, the space of which is studied in Section~\ref{sec:Hdiag}.

\subsection{The $C^*$-algebra ${\mathcal A}$ and its states} \label{sec:defA}
 We introduce the algebra~${\mathcal A}$ which is  the closure of~${\mathcal A}_0$ for the norm
$\| \cdot\|_{\mathcal A}$ given by
\begin{equation}
 \label{eq_normA}	
  \| \sigma\|_{\mathcal A}:= 
  \sup_{(x,\lambda)\in G\times \widehat G} 
  \|\sigma(x,\lambda)\|_{\mathcal L (L^2(\mathfrak p_\lambda))}.
\end{equation}
Clearly, $\mathcal A$ is a sub-$C^*$-algebra of the tensor product of the commutative $C^*$-algebra $C_0(G)$ of continuous functions on $G$ vanishing at infinity together with $L^\infty(\widehat G)$.

It turns out that one can identify its spectrum in the following way:

\begin{proposition}
\label{prop:C*A}
 The set ${\mathcal A}$ is a separable $C^*$-algebra of type~1.
It is not unital  but admits an approximation of identity. 
Besides, if $\pi_0\in \widehat G$ and $x_0\in G$, then the mapping 
$$
\left\{\begin{array}{lll}
{\mathcal A}_0 &\longrightarrow& {\mathcal L}({\mathcal H}_{\lambda_0})\\
\sigma&\longmapsto & \sigma(x_0,\pi_0)
\end{array}\right. \quad
$$
extends to a continuous mapping $\rho_{x_0,\pi_{0}}:{\mathcal A}\to {\mathcal L}({\mathcal H}_{\pi_0})$ which is an irreducible non-zero representation of ${\mathcal A}$. Furthermore, the mapping
$$
\left\{\begin{array}{lll}
 G\times \widehat G &\longrightarrow&\widehat{\mathcal A} \\
 (x_0, \pi_0) &\longmapsto& \rho_{x_0,\pi_0}
 \end{array}\right.
 $$
is a homeomorphism which allows for the identification of  $\widehat {\mathcal A}$ with $G\times \widehat G$.
\end{proposition}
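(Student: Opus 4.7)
My plan is to identify $\mathcal{A}$ with the $C^*$-tensor product $C_0(G)\otimes C^*(G)$, where $C^*(G)$ is the group $C^*$-algebra of $G$, and then invoke standard results on tensor products of type~1 $C^*$-algebras. The main technical point to check is the isometric isomorphism
$$
\mathcal{A} \ \cong \ C_0(G, C^*(G)) \ \cong \ C_0(G)\otimes C^*(G).
$$
Indeed, by Kirillov's theory and amenability of nilpotent Lie groups, $C^*(G)$ is a separable type~1 $C^*$-algebra whose spectrum is exactly $\widehat G$ as described in \eqref{eq_widehatG}, and the universal $C^*$-norm of $\kappa\in L^1(G)$ coincides with $\sup_{\pi\in\widehat G}\|\pi(\kappa)\|_{\mathcal{L}(\mathcal{H}_\pi)}$. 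Consequently, the defining norm \eqref{eq_normA} of a symbol $\sigma(x,\lambda)=\mathcal{F}\kappa_x(\lambda)$ in $\mathcal{A}_0$ is exactly $\sup_x \|\kappa_x\|_{C^*(G)}$, so $\mathcal{A}_0$ is a normed subalgebra of $C_0(G, C^*(G))$.

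Next I would establish density, hence the identification displayed above. This reduces to showing that $C_c^\infty(G, \mathcal{S}(G))$ is dense in $C_0(G,C^*(G))$, which follows by first approximating an element of $C_0(G,C^*(G))$ by a finite sum of elementary tensors (i.e., using density of the algebraic tensor product), and then approximating each factor: $C_c^\infty(G)$ is dense in $C_0(G)$, and $\mathcal{S}(G)$ is dense in $L^1(G)$ which has dense image in $C^*(G)$. From this identification, separability of $\mathcal{A}$ follows from that of $C_0(G)$ and $C^*(G)$; the type~1 property follows because the minimal $C^*$-tensor product of two type~1 $C^*$-algebras is again type~1 (Dixmier, \emph{Les $C^*$-alg\`ebres et leurs repr\'esentations}).

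For the spectrum, I would invoke the theorem that for separable type~1 $C^*$-algebras $A$ and $B$, every irreducible representation of $A\otimes B$ is unitarily equivalent to a product $\rho_A\otimes \rho_B$ of irreducible representations, and the natural map $\widehat A \times \widehat B \to \widehat{A\otimes B}$ is a homeomorphism. Applied to $A=C_0(G)$ (whose spectrum consists of evaluations $\mathrm{ev}_{x_0}$ at points $x_0 \in G$) and $B=C^*(G)$ (with spectrum $\widehat G$), this identifies $\widehat{\mathcal{A}}$ with $G\times \widehat G$ via the map $(x_0,\pi_0)\mapsto \mathrm{ev}_{x_0}\otimes \pi_0$. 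On the dense subalgebra $\mathcal{A}_0$, this representation is precisely $\sigma \mapsto \sigma(x_0,\pi_0) = \pi_0(\kappa_{x_0})$, which recovers the formula in the statement and gives the continuous extension $\rho_{x_0,\pi_0}$ as well as its irreducibility and non-vanishing (from irreducibility and non-vanishing of $\pi_0$).

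Finally, non-unitality of $\mathcal{A}$ is inherited from $C_0(G)$, which is non-unital because $G$ is non-compact; and the existence of an approximate identity follows either directly (any separable $C^*$-algebra admits one) or by tensoring an approximate identity $(\chi_j)_j\subset C_c^\infty(G)$ of $C_0(G)$ with an approximate identity of $C^*(G)$ (e.g.\ built from a heat-kernel-type family). The principal obstacle in this plan is verifying carefully the density assertion above, since one must be attentive to which completions are being taken and use amenability of $G$ to equate the full and reduced $C^*$-norms; once this is in place, everything else is a clean application of established $C^*$-algebraic machinery.
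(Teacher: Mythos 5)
Your proposal is correct and follows essentially the same route as the paper: the paper likewise identifies $\mathcal{A}$ with $C_0(G,C^*(G))$ (equivalently $C_0(G)\otimes C^*(G)$) using that $C^*(G)$ is the closure of $\mathcal{F}\mathcal{S}(G)$ for $\sup_{\lambda\in\widehat G}\|\cdot\|_{\mathcal{L}(\mathcal{H}_\lambda)}$, and then reads off the spectrum, type~1 property, and (non-)unitality from the known structure of $C^*(G)$ and $C_0(G)$, deferring details to \cite[Section~5]{FF}. You make explicit the density and tensor-product steps (Dixmier's theorem on irreducible representations of $A\otimes B$ for type~1 algebras, nuclearity of $C_0(G)$, amenability of $G$ to equate full and reduced norms) that the paper leaves to the cited reference, but the argument is the same.
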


The proof follows the lines of \cite[Section 5]{FF}. It utilises the fact  that, by definition,  the $C^*$ algebra $C^*(G)$ of the group $G$ is the closure of $\mathcal F \mathcal S(G)$ for $\sup_{\lambda\in \widehat G} \|\cdot\|_{\mathcal L(\mathcal H_\lambda)}$ and that the spectrum of $C^*(G)$ is $\widehat G$. This implies readily that $\mathcal A$ may be identified with the $C^*$-algebra of continuous functions which vanish at infinity on $G$ and are valued on $C^*(G)$ and that its spectrum is as described in Proposition \ref{prop:C*A}.
Furthermore, the algebraic span of the symbols of the form $\tau(x,\lambda)= a(x)b(\lambda)$ with $a(x)$ in $C_c^\infty(G)$ and the Fourier multiplier $b(\lambda)$ in $S^{-\infty}$ is dense in $\mathcal A$. Notice that their boundedness is easier to obtain since  ${\rm Op}_\eps(\tau)$ simply is the composition of the operator of multiplication by $a(x)$ and of the Fourier multiplier $b(\eps^2 \lambda)$, and one has 
\begin{equation}\label{simplebound}
\| {\rm Op}_\eps(\tau)\|_{{\mathcal L}(L^2(G))}\leq \sup_{x\in G,\,\lambda\in\widehat G} \| \tau \|_{{\mathcal L}(L^2({\mathfrak p}_\lambda))}.
\end{equation}

\medskip 

We can also describe the states of the $C^*$-algebra $\mathcal A$. 

\begin{proposition}
\label{prop:states}
 If $\ell$ is a  state of the $C^*$-algebra ${\mathcal A}$, then there exists  a pair $(\gamma,\Gamma)$ unique up to its equivalence class in
${\mathcal M}_{ov}^+(G\times \widehat G)$
which satisfies
  \begin{equation}
\label{eq_prop_trGgno1}
\int_{G\times \widehat G} {\rm Tr} \left(\Gamma(x, \lambda)\right) d\gamma(x, \lambda) =1,
\end{equation}
and 
\begin{equation}
\label{eq_prop_elltrGgno1}
\forall \sigma\in{\mathcal A}\qquad
\ell(\sigma) = \int_{G\times \widehat G} {\rm Tr}\left(\sigma(x,\lambda) \Gamma(x, \lambda)\right) d\gamma(x, \lambda).
\end{equation}
Conversely, if   
$\Gamma \, d\gamma\in {\mathcal M}_{ov}^+(G\times \widehat G)$ satisfies \eqref{eq_prop_trGgno1}, then the linear form $\ell$ defined via~\eqref{eq_prop_elltrGgno1} is a state of  ${\mathcal A}$. 
\end{proposition}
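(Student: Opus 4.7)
The plan is to deduce Proposition \ref{prop:states} from the general disintegration theory of states on separable type~I $C^*$-algebras, applied to $\mathcal{A}$ via the identification $\widehat{\mathcal{A}} \simeq G \times \widehat{G}$ provided by Proposition \ref{prop:C*A}. The analogous statement in \cite{FF} for non time-dependent semi-classical symbols would serve as a model.

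First I would dispatch the converse direction. Given $\Gamma\, d\gamma \in \mathcal{M}_{ov}^+(G\times \widehat G)$ satisfying \eqref{eq_prop_trGgno1}, linearity and positivity of $\ell$ defined by \eqref{eq_prop_elltrGgno1} are immediate: for $\sigma \in \mathcal{A}$, the operator $\sigma^*\sigma$ is pointwise positive, and $\mathrm{Tr}(\sigma^*\sigma\,\Gamma) \geq 0$ almost everywhere since $\Gamma(x,\lambda)\geq 0$ is trace-class. The estimate
\[
|\ell(\sigma)| \leq \int_{G\times\widehat G}\|\sigma(x,\lambda)\|_{\mathcal{L}(\mathcal{H}_\lambda)}\,\mathrm{Tr}(\Gamma(x,\lambda))\,d\gamma(x,\lambda) \leq \|\sigma\|_{\mathcal{A}}
\]
gives $\|\ell\| \leq 1$. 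To show $\|\ell\| = 1$ (which is the correct notion of state for the non-unital $\mathcal{A}$), I would evaluate $\ell$ on an approximate identity of $\mathcal{A}$ supplied by Proposition \ref{prop:C*A}: such an approximate identity $\sigma_n$ with $\|\sigma_n\|_{\mathcal{A}}\leq 1$ converges pointwise strongly to $\mathrm{Id}_{\mathcal{H}_\lambda}$, so $\mathrm{Tr}(\sigma_n \Gamma) \to \mathrm{Tr}(\Gamma)$ pointwise and dominated convergence combined with \eqref{eq_prop_trGgno1} yields $\ell(\sigma_n) \to 1$.

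For the direct direction, I would apply the GNS construction to $\ell$, producing a cyclic representation $(\pi_\ell, \mathcal{H}_\ell, \xi_\ell)$ of $\mathcal{A}$ with $\ell(\sigma) = (\pi_\ell(\sigma)\xi_\ell, \xi_\ell)_{\mathcal{H}_\ell}$. Since $\mathcal{A}$ is separable and of type~I (Proposition \ref{prop:C*A}), the central decomposition theorem (see Dixmier's \emph{Les $C^*$-alg\`ebres et leurs repr\'esentations}, Chapter 8) provides a standard Borel measure $\gamma$ on $\widehat{\mathcal{A}} \simeq G\times \widehat G$ and a measurable field of irreducible representations $(x,\lambda)\mapsto \rho_{x,\lambda}$ such that $\pi_\ell \simeq \int^\oplus \rho_{x,\lambda}\,d\gamma$. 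The cyclic vector decomposes as a measurable field $\xi_\ell = \int^\oplus \xi(x,\lambda)\,d\gamma(x,\lambda)$, and setting $\Gamma(x,\lambda) := |\xi(x,\lambda)\rangle\langle \xi(x,\lambda)|$ yields a measurable field of rank-one (hence trace-class) positive operators with $\mathrm{Tr}(\Gamma(x,\lambda))=\|\xi(x,\lambda)\|^2$; by construction
\[
\ell(\sigma) = \int_{G\times\widehat G}(\sigma(x,\lambda)\xi(x,\lambda),\xi(x,\lambda))\,d\gamma(x,\lambda) = \int_{G\times\widehat G}\mathrm{Tr}(\sigma(x,\lambda)\Gamma(x,\lambda))\,d\gamma(x,\lambda),
\]
and the state condition $\|\ell\|=1$ translates into \eqref{eq_prop_trGgno1}.

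For uniqueness, I would observe that two representatives $(\gamma,\Gamma)$ and $(\gamma',\Gamma')$ yielding the same $\ell$ must define the same central decomposition of $\pi_\ell$; the uniqueness clause of the central decomposition theorem identifies $\gamma$ and $\gamma'$ up to mutually absolutely continuous measures with Radon--Nikodym derivative $f$, and then $\Gamma' = f^{-1}\Gamma$ almost everywhere, which is exactly the equivalence relation defining $\mathcal{M}_{ov}^+(G\times\widehat G)$ in Definition \ref{def_gammaGamma}. The main technical obstacle will be verifying that the abstract measurable field coming out of the central decomposition genuinely coincides, up to unitary equivalence implemented in a measurable way, with the concrete field $(x,\lambda)\mapsto \mathcal{H}_\lambda$ fixed in the paper---this requires paying attention to the two pieces of $\widehat G$ distinguished in \eqref{eq_widehatG} (the infinite-dimensional $\pi^\lambda$ and the characters $\pi^{(0,\omega)}$) and to the identification of $\widehat{\mathcal{A}}$ with $G\times \widehat G$ as topological, not merely set-theoretic, spaces, again granted by Proposition \ref{prop:C*A}.
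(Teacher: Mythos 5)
The paper's own proof of this proposition is essentially a citation: it says the result follows from Proposition~4.1 in~\cite{FF2} and that the details are in the Appendix of~\cite{FF2}, following the lines of~\cite[Section~5]{FF}. Your attempt to reconstruct an actual argument from GNS theory and the type~I structure is the right spirit, and the converse direction is handled correctly (including the non-unital subtlety of verifying $\|\ell\|=1$ via an approximate identity).

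However, there is a genuine gap in the direct direction. You write that the central decomposition of the GNS representation gives $\pi_\ell \simeq \int^\oplus \rho_{x,\lambda}\,d\gamma$ with $\rho_{x,\lambda}$ irreducible, and then set $\Gamma(x,\lambda) := |\xi(x,\lambda)\rangle\langle\xi(x,\lambda)|$ rank-one. This is only valid when $\pi_\ell$ is multiplicity-free, which fails for a general state. The canonical decomposition of a representation of a separable type~I $C^*$-algebra (Dixmier, Chapter~8) is $\pi_\ell \simeq \int^\oplus_{\widehat{\mathcal A}} \rho_{x,\lambda}\otimes 1_{\C^{m(x,\lambda)}}\,d\gamma(x,\lambda)$ with a measurable multiplicity function $m$; the cyclic vector then decomposes as a field $\xi(x,\lambda) \in \mathcal H_\lambda \otimes \C^{m(x,\lambda)}$, and one must set $\Gamma(x,\lambda)$ equal to the \emph{partial trace} of $|\xi(x,\lambda)\rangle\langle\xi(x,\lambda)|$ over the multiplicity factor $\C^{m(x,\lambda)}$. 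That operator is positive trace-class but in general has rank up to $m(x,\lambda)$. Concretely, the average of two pure states supported at the same point $(x_0,\lambda_0)$ on two orthogonal unit vectors of $\mathcal H_{\lambda_0}$ already forces $\Gamma(x_0,\lambda_0)$ to have rank two, so your rank-one claim cannot hold. With the partial-trace correction (or, alternatively, by first decomposing $\ell$ as an integral over pure states and then integrating over the fibers of the map from pure states to $\widehat{\mathcal A}$), the rest of your argument --- including the uniqueness via the uniqueness clause of the canonical decomposition --- goes through.
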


\begin{proof}
This proposition is a corollary of Proposition~4.1 in~\cite{FF2}. Its proof follows the lines of~\cite[Section 5]{FF} and  is performed in details in the Appendix of~\cite{FF2}.
\end{proof}

The description of the states of the $C^*$-algebra ${\mathcal A}$ yields the following corollary:
\begin{corollary}
\label{cor1_prop:states}
The topological dual ${\mathcal A}^*$ of 	 ${\mathcal A}$
may  be identified as a Banach space with 
${\mathcal M}_{ov}(G\times \widehat G)$ in the following way:
\begin{itemize}
\item 
If $\ell:{\mathcal A}\to \C$ is a continuous linear form, 
then there exists a unique element $\Gamma d\gamma\in {\mathcal M}_{ov} 	(G\times \widehat G)$ satisfying \eqref{eq_prop_elltrGgno1}. 
\item 
Conversely,  \eqref{eq_prop_elltrGgno1} defines a continuous linear form $\ell$ on ${\mathcal A}$.
\end{itemize}
Moreover, we have the following properties:
\begin{enumerate}
\item $\|\ell\|_{{\mathcal A}^*} = \|\Gamma d\gamma\|_{{\mathcal M}}$.
\item
$\Gamma d\gamma \geq 0$ if and only if $\ell(\sigma)\geq 0$ for every positive element $\sigma$ in the $C^*$-algebra ${\mathcal A}$. 
\item
$\ell$ is a state if and only if $	\Gamma d\gamma \geq 0$ and \eqref{eq_prop_trGgno1} holds.
\end{enumerate}
\end{corollary}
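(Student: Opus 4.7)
The strategy is to bootstrap Proposition~\ref{prop:states}, which identifies states of $\mathcal A$ with normalised positive elements of ${\mathcal M}_{ov}^+(G\times \widehat G)$, up to the full topological dual via the Jordan decomposition of bounded linear functionals on a $C^*$-algebra. Given a continuous linear form $\ell:\mathcal A\to \C$, I would first write $\ell = (\ell_1 - \ell_2) + i(\ell_3 - \ell_4)$, where each $\ell_j$ is a positive linear functional on $\mathcal A$ of norm bounded by $\|\ell\|_{{\mathcal A}^*}$; this decomposition is available on any (possibly non-unital) $C^*$-algebra, via the approximate identity of $\mathcal A$ supplied by Proposition~\ref{prop:C*A}. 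Rescaling, each nonzero $\ell_j$ is a positive multiple of a state, so Proposition~\ref{prop:states} produces $\Gamma_j d\gamma_j \in {\mathcal M}_{ov}^+(G\times \widehat G)$ with $\ell_j(\sigma)=\int {\rm Tr}(\sigma\Gamma_j)\,d\gamma_j$. Dominating by $\gamma:=\gamma_1+\gamma_2+\gamma_3+\gamma_4$ and invoking Radon--Nikodym to write $d\gamma_j=f_j\,d\gamma$, I would set
$$\Gamma := (f_1\Gamma_1-f_2\Gamma_2)+i(f_3\Gamma_3-f_4\Gamma_4),$$
so that $(\gamma,\Gamma)\in \widetilde{\mathcal M}_{ov}(G\times \widehat G)$ satisfies \eqref{eq_prop_elltrGgno1}. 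Uniqueness up to equivalence follows by subtracting two representatives: the resulting operator-valued measure pairs to zero with every element of $\mathcal A_0$, and the density of $\mathcal A_0$ in $\mathcal A$ together with the separating family of symbols $a(x) b(\lambda)$ from the discussion following Proposition~\ref{prop:C*A} forces the difference to vanish in ${\mathcal M}_{ov}$.

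For the norm identity in (1), the easy direction
$$|\ell(\sigma)|\leq \int_{G\times\widehat G}\|\sigma(x,\lambda)\|_{{\mathcal L}({\mathcal H}_\lambda)}\,{\rm Tr}|\Gamma(x,\lambda)|\,d\gamma(x,\lambda)\leq \|\sigma\|_{\mathcal A}\,\|\Gamma d\gamma\|_{\mathcal M}$$
follows pointwise from the duality between trace-class and bounded operators on ${\mathcal H}_\lambda$, and simultaneously shows that any $\Gamma d\gamma\in{\mathcal M}_{ov}(G\times\widehat G)$ defines a continuous linear form on $\mathcal A$ via \eqref{eq_prop_elltrGgno1}. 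The reverse inequality $\|\ell\|_{{\mathcal A}^*}\geq \|\Gamma d\gamma\|_{\mathcal M}$ is the step I expect to be the main obstacle: one must test against symbols approximating the polar decomposition of $\Gamma$. Writing $\Gamma(x,\lambda)=U(x,\lambda)|\Gamma(x,\lambda)|$ pointwise, the plan is to approximate the measurable field $U^*$ in the strong operator topology by symbols $\sigma_n\in \mathcal A_0$ with $\|\sigma_n\|_{\mathcal A}\leq 1$, exploiting separability of $\mathcal A$ from Proposition~\ref{prop:C*A} and Lusin--Egorov-type results for measurable fields of operators on a separable Hilbert space, so that $\ell(\sigma_n)\to \|\Gamma d\gamma\|_{\mathcal M}$.

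Claim~(2) is then almost immediate: if $\Gamma d\gamma \geq 0$, the pointwise positivity of ${\rm Tr}(\sigma(x,\lambda)\Gamma(x,\lambda))$ for positive $\sigma\in\mathcal A$ gives $\ell(\sigma)\geq 0$; conversely, if $\ell$ is positive then its Jordan decomposition collapses to a single positive part and Proposition~\ref{prop:states} produces a positive representative $\Gamma d\gamma$, whose equivalence class is uniquely determined by what was just proved. Claim~(3) combines (2) with the computation that along any approximate identity $(e_\alpha)$ of $\mathcal A$ one has $\ell(e_\alpha)\to \int {\rm Tr}(\Gamma(x,\lambda))\,d\gamma(x,\lambda)$ by dominated convergence in the representation \eqref{eq_prop_elltrGgno1}; hence $\ell$ is a state exactly when that integral equals~$1$, which is \eqref{eq_prop_trGgno1}.
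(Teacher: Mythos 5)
The paper gives no explicit proof of this corollary --- it is presented as a direct consequence of Proposition~\ref{prop:states} --- and your argument fills in exactly the standard $C^*$-algebra machinery one would expect the authors to have in mind: Jordan decomposition of a bounded functional into four positive parts, Proposition~\ref{prop:states} applied to each, and Radon--Nikodym against the sum measure to assemble a single pair $(\gamma,\Gamma)$, with the hard direction of the norm identity handled via polar decomposition of $\Gamma$ and an approximation of the partial isometry by elements of the unit ball of $\mathcal A$ (a point where Kaplansky density in the direct-integral von Neumann algebra $\int^{\oplus}\mathcal L(\mathcal H_\lambda)\,d\gamma$ would make your Lusin--Egorov sketch fully rigorous, using ultraweak continuity of $T\mapsto\int\mathrm{Tr}(T\Gamma)\,d\gamma$). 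Your proposal is correct and takes essentially the same route the paper intends.
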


\subsection{Time dependent states and the proof of Theorem~\ref{theo:mesures}}\label{sec:prooftheomesures}
With this precise description of the topological dual ${\mathcal A}^*$, one can now consider functionals defined on ${\mathcal C}_c^\infty(\R)\times {\mathcal A}_0$.

\begin{proposition}
\label{cor2_prop:states}
Let $\ell: {\mathcal C}_c^\infty(\R)\times {\mathcal A}_0\to \C $ be a non-zero bilinear map satisfying 
\begin{equation}
\label{eq_cor_ellL1}
\forall \sigma\in {\mathcal A}_0,\quad
\forall \theta \in \mathcal C_c^\infty(\R),\qquad
\ell(\theta, \sigma)\leq \|\sigma\|_{\mathcal A}
\|\theta\|_{L^1(\R)},
\end{equation}
and 
\begin{equation}
\label{eq_cor_ellpos}
\forall \sigma\in {\mathcal A}_0,\quad
\forall \theta \in \mathcal C_c^\infty(\R),\qquad
\ell(|\theta|^2, \sigma^*\sigma)\geq 0,
\end{equation}
Then $\ell$ extends uniquely to a continuous bilinear map 
$\ell: L^1(\R)\times {\mathcal A}\to \C $ for which we keep the same notation.
Furthermore,  there exists a unique map $t\mapsto \Gamma_t d\gamma_t$ in $L^\infty(\R, {\mathcal M}_{ov}^+( G\times \widehat G))$  
satisfying $\|\Gamma_t d\gamma_t\|_{\mathcal M}=1$
for almost all $t\in \R$, and:
$$
\forall \sigma\in {\mathcal A},
\quad
\forall \theta \in L^1(\R),
\qquad
\ell(\theta,\sigma) 
= \int_{\R}\theta(t)\int_{G\times \widehat G}  {\rm Tr}\left(\sigma(x,\lambda) \Gamma_t(x, \lambda)\right) d\gamma_t(x, \lambda) \ dt.
$$
\end{proposition}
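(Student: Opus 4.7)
The plan is to proceed in four steps: extension by density, pointwise representation via Corollary \ref{cor1_prop:states}, time-disintegration by a duality argument, and a final renormalisation. First, I would use density of $\mathcal A_0$ in $\mathcal A$ (by the definition of $\mathcal A$) and of $\mathcal C_c^\infty(\R)$ in $L^1(\R)$, together with the bound \eqref{eq_cor_ellL1}, to extend $\ell$ uniquely to a continuous bilinear form on $L^1(\R)\times\mathcal A$ for which I keep the same notation. The positivity \eqref{eq_cor_ellpos} then extends to $\ell(|\eta|^2,\sigma^*\sigma)\geq 0$ for all $\eta\in L^2(\R)$ and $\sigma\in\mathcal A$ via approximation: if $\eta_n\in\mathcal C_c^\infty(\R)$ converges to $\eta$ in $L^2(\R)$, then $|\eta_n|^2\to|\eta|^2$ in $L^1(\R)$ by Cauchy--Schwarz, and approximation of $\sigma$ by elements of $\mathcal A_0$ is harmless by continuity of the product and the involution on~$\mathcal A$.

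Next, for each fixed $\theta\in L^1(\R)$ I would apply Corollary \ref{cor1_prop:states} to the continuous linear form $\sigma\mapsto\ell(\theta,\sigma)$ to obtain a unique $\Gamma^\theta d\gamma^\theta\in\mathcal M_{ov}(G\times\widehat G)$ with $\|\Gamma^\theta d\gamma^\theta\|_{\mathcal M}\leq\|\theta\|_{L^1(\R)}$ representing it. For $\theta\geq 0$, writing $\theta=|\sqrt\theta|^2$ with $\sqrt\theta\in L^2(\R)$ yields $\ell(\theta,\sigma^*\sigma)\geq 0$ for every $\sigma\in\mathcal A$. Since every positive element of the $C^*$-algebra $\mathcal A$ is of the form $\sigma^*\sigma$ (take $\sigma$ to be its square root), the linear form $\ell(\theta,\cdot)$ is positive, and part (2) of Corollary \ref{cor1_prop:states} gives $\Gamma^\theta d\gamma^\theta\in\mathcal M_{ov}^+(G\times\widehat G)$.

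To disintegrate in the time variable, I would use that $\mathcal A$ is separable (Proposition \ref{prop:C*A}) to identify $L^1(\R;\mathcal A)^*$ with $L^\infty_{w^*}(\R;\mathcal A^*)$, the space of equivalence classes of weak-$*$-measurable essentially bounded maps $t\mapsto\mu_t\in\mathcal A^*$. Density of $L^1(\R)\otimes\mathcal A$ in $L^1(\R;\mathcal A)$ promotes $\ell$ to a continuous linear form on $L^1(\R;\mathcal A)$, and this duality supplies a unique $t\mapsto\mu_t$ with
$$
\ell(\theta,\sigma)=\int_\R\theta(t)\langle\mu_t,\sigma\rangle\,dt, \qquad \theta\in L^1(\R),\ \sigma\in\mathcal A.
$$
Choosing a countable dense family of positive elements $\{\sigma_j\}\subset\mathcal A$, I would transfer the positivity $\ell(\theta,\sigma_j)\geq 0$ for non-negative $\theta\in L^1(\R)$ to the almost-everywhere positivity $\mu_t\geq 0$, and Corollary \ref{cor1_prop:states} applied pointwise associates a unique element $\Gamma_t d\gamma_t\in\mathcal M_{ov}^+(G\times\widehat G)$ to $\mu_t$. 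Finally, the scalar function $t\mapsto\|\mu_t\|_{\mathcal A^*}=\|\Gamma_t d\gamma_t\|_{\mathcal M}$ is measurable as a supremum over a countable dense subset of the unit ball of $\mathcal A$, and the equivalence relation built into the definition of $\mathcal M_{ov}$ lets me rescale the representative $(\gamma_t,\Gamma_t)$ to enforce $\|\Gamma_t d\gamma_t\|_{\mathcal M}=1$ a.e.\ (the set where $\mu_t=0$ being negligible once $\ell$ is non-zero), which simultaneously pins down uniqueness.

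The main obstacle I anticipate is the duality identification $L^1(\R;\mathcal A)^*=L^\infty_{w^*}(\R;\mathcal A^*)$: it uses the separability of the $C^*$-algebra $\mathcal A$ in an essential way and requires a Pettis-type measurability argument to produce the weak-$*$-measurable map $t\mapsto\mu_t$ in a well-defined manner. The pointwise application of Corollary \ref{cor1_prop:states} inside an $L^\infty$-class is then a reasonably routine consequence, as is the transfer of integrated positivity to pointwise positivity via a countable dense subset.
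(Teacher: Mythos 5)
Your proposal is correct and reaches the conclusion by essentially the same strategy as the paper, but it is more explicit about the two genuinely delicate points, and it adds one detour the paper does not take. The paper extends $\ell$ to $L^1(\R)\times\mathcal A$, identifies, for each $\sigma$, the linear form $\theta\mapsto\ell(\theta,\sigma)$ with a function $\ell_\sigma\in L^\infty(\R)$, then simply \emph{declares} that $L:t\mapsto(\sigma\mapsto\ell_\sigma(t))$ is a measurable bounded map $\R\to\mathcal A^*$, deduces from $\int|\theta|^2\ell_{\sigma^*\sigma}\geq 0$ that $\ell_{\sigma^*\sigma}(t)\geq 0$ a.e., and concludes with Corollary~\ref{cor1_prop:states}. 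Two things are glossed over there: how to select representatives of the classes $\ell_\sigma$ so that $t\mapsto L(t)$ is a well-defined weak-$*$ measurable map, and how to pass from a $\sigma$-dependent null set of non-positivity to a single null set uniform in $\sigma$. You address exactly these by invoking the vector-valued duality $L^1(\R;\mathcal A)^*\cong L^{\infty}_{w^*}(\R;\mathcal A^*)$ (which is what separability of $\mathcal A$ really buys, as you note), and by using a countable dense set of positive elements of $\mathcal A$ to get $\mu_t\geq 0$ for a.e.\ $t$. Your intermediate step producing $\Gamma^\theta d\gamma^\theta$ for each fixed $\theta$ is not needed for the final disintegration (once you have $t\mapsto\mu_t$, positivity of $\ell(\theta,\cdot)$ for $\theta\geq 0$ suffices directly), but it is harmless and does serve to confirm that $\ell(\theta,\cdot)$ is a positive functional. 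In sum, the paper's route is shorter and the construction is implicit; yours is more cautious and makes the measurability bookkeeping explicit, which is arguably where the real content of the argument lies.
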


\begin{proof}
The estimate in \eqref{eq_cor_ellL1} implies that
the bilinear map $\ell$ extends uniquely into a continuous bilinear map on $L^1(\R)\times {\mathcal A}$ for which we keep the same notation.
Furthermore, 
for each $\sigma\in {\mathcal A}$, 
we identify the continuous linear map $\theta\mapsto \ell(\theta , \sigma)$ on the Banach space $L^1(\R)$ 
 with the function $\ell_\sigma\in L^\infty(\R)$ given via
$$
\forall \theta \in L^1(\R)
\qquad
\ell(\theta , \sigma) = \int_{\R} \theta(t) \ \ell_\sigma(t) \ dt.
$$
Note that  $\|\ell_\sigma\|_{L^\infty(\R)}\leq \|\sigma\|_{\mathcal A}$ and that the map $\sigma\mapsto \ell_\sigma$ is a linear mapping on ${\mathcal A}$ to $L^\infty(\R)$.
Hence, we can  view the map $L:t\mapsto (\sigma\mapsto \ell_\sigma(t))$ as a measurable 
bounded map from $\R$ to the Banach space  ${\mathcal A}^*$.
Moreover, 
the assumption in \eqref{eq_cor_ellpos} implies that 
$$
\forall \theta \in {\mathcal C}_c^\infty(\R)
\qquad
\int_{\R} |\theta(t)|^2 \ell_{\sigma^*\sigma}(t) dt
=
\ell (|\theta|^2 , \sigma^*\sigma)
\geq 0,
$$
hence 
 $\ell_{\sigma^*\sigma}(t)\geq 0$ for almost every $t\in \R$.
In other words, $\sigma\mapsto \ell_\sigma(t)$ is a state for almost every $t\in \R$.
Hence, the map $L\in L^\infty(\R, {\mathcal A}^*)$ 
is valued in the set of state of  ${\mathcal A}$.
Corollary \ref{cor1_prop:states}
with the identification of ${\mathcal A}^*$
 with ${\mathcal M}_{ov}(G\times \widehat G)$ 
allows us to conclude.
\end{proof}

With these concepts in mind, one can now sketch the proof of  Theorem~\ref{theo:mesures} as its arguments are an adaptation of the ones in \cite{FF,FF2}. 

\begin{proof}[Sketch of proof of Theorem~\ref{theo:mesures}]
If $\limsup_{\eps\rightarrow 0}  \| u^\eps\|_{L^\infty(\R,L^2(G))}=0$, then the map given by $\Gamma_t d\gamma_t=0$ for all $t\in \R$ answers our problem.
Hence, by dividing $u^\eps$ by $\limsup_{\eps\rightarrow 0}  \| u^\eps\|_{L^\infty(\R,L^2(G))}$ if necessary, we can assume that 
$$
\limsup_{\eps\rightarrow 0}  \| u^\eps\|_{L^\infty(\R,L^2(G))}=1.
$$ 
We then consider the quantities 
$ \ell_\eps(\theta, \sigma)$ defined in~(\ref{def:leps}) and we observe the three following facts:
\begin{enumerate}
\item 
For any $\theta\in {\mathcal C}_c^\infty(\R)$ and $\sigma\in {\mathcal A}_0$, the family $\ell_\eps(\theta, \sigma)$ is bounded and there exists a subsequence $(\eps_k(\sigma))_{ k\in \mathbb N}$ such that $\ell_{\eps_k(\sigma)}(\theta, \sigma)$ has a limit $\ell(\theta,\sigma)$.
\item 
Using the separability of ${\mathcal C}_c^\infty(\R)\times {\mathcal A}_0$ and a diagonal extraction, one can find a sequence $(\eps_k)_{k\in \mathbb N}$ such that for all $\theta\in {\mathcal C}_c^\infty(\R)$  and  $\sigma\in {\mathcal A}_0$, the sequence $(\ell_{\eps_k}(\theta, \sigma))_{k\in \mathbb N}$ has a limit $\ell(\theta, \sigma)$ and the sequence $(\| u^{\eps_k}\|_{L^\infty(\R,L^2(G))})_{k\in \mathbb N}$ converges to 1.
\item 
The map $(\theta,\sigma )\mapsto \ell(\theta,\sigma)$ constructed at point (2) satisfies \eqref{eq_cor_ellpos}.
It also satisfies \eqref{eq_cor_ellL1}
for symbols of the form $\tau(x,\lambda)=a(x)b(\lambda)$, see~\eqref{simplebound}, therefore for all $\sigma\in {\mathcal A}$ (since the algebraic span of such $\tau$ is dense in~${\mathcal A}$, see~Section~\ref{sec:3.1}).
 \end{enumerate}
We conclude with Proposition~\ref{cor2_prop:states}.
\end{proof}

\subsection{Some comments on time-dependent states}

The result in Proposition \ref{cor2_prop:states}
 calls for  some  comments 
which will not be  used in the following paper but are  of interest in themselves. 
Indeed, with
a straightforward adaptation of the arguments given for the proof of Proposition~\ref{prop:C*A}, we obtain  an analogue description for the closure of 
$C^\infty(J)\otimes {\mathcal A}_0$
if $J$ is a compact interval, 
and of $C^\infty_c(\R)\otimes {\mathcal A}_0$ if $J=\R$.
Note first that in both cases, the closure is for the norm
$$
\|\tau\|_{C_0(J,{\mathcal A})}:=
\inf\left\{\sum_j \|\theta_j\|_{L^\infty(J)}\| \sigma_j\|_{\mathcal A}
\ : \ \tau =\sum_j \theta_j \sigma_j  \right\},
$$
and we identify them respectively with the $C^*$-algebra
 $C(J,{\mathcal A})$ 
of continuous functions on $J$ valued in  ${\mathcal A}$
when $J$ is a compact interval 
and with the Banach space
 $C_0(\R,{\mathcal A})$ 
of continuous functions on~$\R$ valued in~${\mathcal A}$ and vanishing at infinity when $J=\R$.
In order to unify the presentation, 
we may write 
$C_0(J,{\mathcal A})$ for $C(J,{\mathcal A})$ when $J$ is a compact interval of $\R$.

\medskip 

Then, an analysis similar to the one of the proof of Proposition~\ref{prop:C*A}
 gives that the $C^*$-algebra $C_0(J,{\mathcal A})$ for $J$ compact interval and  for $J=\R$ is a separable $C^*$-algebra of type~1.
It is not unital  but admits an approximation of identity. 
Its spectrum may be identified with $J\times G\times \widehat G$.
Its states may be identified with the elements $\Gamma
 \, d\gamma$ in 
${\mathcal M}_{ov}^+(J\times G\times \widehat G)$
satisfying 
$$
\int_{J\times G\times \widehat G} {\rm Tr} \left(\Gamma(t,x, \lambda)\right) d\gamma(t,x, \lambda) =1,
$$
via 
$$
\forall \sigma\in C_0(J,{\mathcal A})\qquad
\ell(\sigma) = \int_{J\times G\times \widehat G} {\rm Tr}\left(\sigma(t,x,\lambda) \Gamma(t,x, \lambda)\right) d\gamma(t,x, \lambda).
$$
Finally, 
a map  $\ell: {\mathcal C}_c^\infty(\R)\times {\mathcal A}_0\to \C $  as in Proposition~\ref{cor2_prop:states}.
extends uniquely into a continuous linear map on $L^1(\R)\times {\mathcal A}$, 
and also into a state of $ C(J,{\mathcal A})$ up to the normalisation $|J|$ for any compact interval.
Using the characterisation above and the uniqueness, 
we can define a pair $(\gamma, \Gamma)\in \widetilde {\mathcal M}_{ov}^+(\R\times G \times \widehat G)$, 
unique up to equivalence,  such that 
$$
\forall \sigma\in {\mathcal A}_0,
\quad
\forall \theta \in {\mathcal C}_c^\infty(\R),
\qquad
\ell(\theta,\sigma) 
= \int_{\R\times G\times \widehat G} \theta(t)\ {\rm Tr}\left(\sigma(x,\lambda) \Gamma(t,x, \lambda)\right) d\gamma(t,x, \lambda);
$$
here $\gamma$ is a measure on $\R\times G\times \widehat G$.
This is a weaker result than the one obtained in Proposition~\ref{cor2_prop:states}, which states that the measure above is absolutely continuous with respect to $dt$, and this explains why  we proceed in this manner.

\medskip

Note that we can proceed as in Proposition \ref{prop:states} and Corollary \ref{cor1_prop:states} to obtain a description of the dual of $C_0(J,\mathcal A)$ that we will use later on:
\begin{corollary}
\label{cor_dualC0}
The topological dual $C_0(J,\mathcal A)^*$ of $C_0(J,\mathcal A)$
may  be identified as a Banach space with 
${\mathcal M}_{ov}(J\times G\times \widehat G)$ in the following way:
\begin{itemize}
\item 
If $\ell:C_0(J,\mathcal A)\to \C$ is a continuous linear form, 
then there exists a unique element $\Gamma d\gamma\in {\mathcal M}_{ov} 	(J\times G\times \widehat G)$ satisfying \eqref{eq_prop_elltrGgno1}. 
\item 
Conversely,  \eqref{eq_prop_elltrGgno1} defines a continuous linear form $\ell$ on $C_0(J,\mathcal A)$.
\end{itemize}
Moreover, we have the following properties:
\begin{enumerate}
\item $\|\ell\|_{C_0(J,\mathcal A)^*} = \|\Gamma d\gamma\|_{{\mathcal M}}$.
\item
$\Gamma d\gamma \geq 0$ if and only if $\ell(\sigma)\geq 0$ for all any positive element $\sigma$ in the $C^*$ algebra $C_0(J,\mathcal A)$. 
\end{enumerate}
\end{corollary}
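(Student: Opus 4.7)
The statement is the exact analogue of Corollary \ref{cor1_prop:states} with $\mathcal{A}$ replaced by $C_0(J,\mathcal{A})$ and the spectrum $G \times \widehat{G}$ replaced by $J \times G \times \widehat{G}$. My plan is to reduce the corollary to the description of states of $C_0(J,\mathcal{A})$ given in the preceding discussion, then to invoke a Jordan-type decomposition to handle arbitrary continuous linear forms.

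First I would reuse the preceding analysis: $C_0(J,\mathcal{A})$ is a separable $C^*$-algebra of type $1$ with spectrum identified with $J \times G \times \widehat{G}$, and its states are exactly the functionals of the form $\sigma\mapsto \int {\rm Tr}(\sigma(t,x,\lambda) \Gamma(t,x,\lambda))\,d\gamma(t,x,\lambda)$ with $\Gamma\,d\gamma \in \mathcal{M}_{ov}^+(J\times G\times \widehat{G})$ of total mass $1$. Combining this with the fact that a positive linear functional on a $C^*$-algebra is a nonnegative multiple of a state yields the same representation for arbitrary positive continuous linear forms, now with $\|\ell\| = \int {\rm Tr}(\Gamma)\, d\gamma$ by evaluating $\ell$ on an approximate identity.

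Next, for a general continuous linear form $\ell$ on $C_0(J,\mathcal{A})$, I would apply the standard Jordan decomposition available for functionals on any $C^*$-algebra: write $\ell = (\ell_1 - \ell_2) + i(\ell_3 - \ell_4)$ with each $\ell_j$ positive and $\sum_j \|\ell_j\| \leq 2\|\ell\|$ (the sharper bound $\|\ell_1\| + \|\ell_2\| = \|{\rm Re}\,\ell\|$ for the self-adjoint part can be upgraded to equality by the non-commutative Hahn decomposition). Each $\ell_j$ is represented by a positive pair $(\gamma_j,\Gamma_j)\in \widetilde{\mathcal{M}}_{ov}^+$; combining these and passing to equivalence classes produces a single class $\Gamma\, d\gamma \in \mathcal{M}_{ov}(J\times G\times \widehat{G})$ satisfying \eqref{eq_prop_elltrGgno1}. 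Uniqueness follows by testing against symbols of the form $\theta(t)a(x)b(\lambda)$ that separate points of the spectrum, together with the density of their algebraic span in $C_0(J,\mathcal{A})$.

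Conversely, given $\Gamma\,d\gamma \in \mathcal{M}_{ov}(J\times G\times \widehat{G})$, the formula \eqref{eq_prop_elltrGgno1} defines a linear functional $\ell$ with $|\ell(\sigma)| \leq \|\sigma\|_{C_0(J,\mathcal{A})} \|\Gamma\,d\gamma\|_{\mathcal{M}}$ by the obvious pointwise bound on the trace, which shows continuity and $\|\ell\|_{C_0(J,\mathcal{A})^*} \leq \|\Gamma\,d\gamma\|_{\mathcal{M}}$; the reverse inequality is obtained by choosing symbols approximating the polar decomposition of $\Gamma$ multiplied by signs of the scalar measure, exactly as in the scalar case. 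The positivity statement (2) is immediate from the fact that $\Gamma\,d\gamma \geq 0$ if and only if the associated functional takes nonnegative values on positive elements of $C_0(J,\mathcal{A})$, which itself follows from the identification of the spectrum. The main subtlety I anticipate is the bookkeeping around the equivalence relation defining $\mathcal{M}_{ov}$: one must check that the Jordan pieces $(\gamma_j,\Gamma_j)$ can be assembled into a single equivalence class in a well-defined way, which is done by choosing a common dominating scalar measure (e.g.\ $\gamma_1+\gamma_2+\gamma_3+\gamma_4$) and expressing each $\Gamma_j$ as a Radon--Nikodym density against it.
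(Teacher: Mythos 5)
Your proposal is correct and follows essentially the paper's (implicit) route: the paper does not spell out a proof but simply says to "proceed as in Proposition \ref{prop:states} and Corollary \ref{cor1_prop:states}", which is exactly what you do — use the description of states of the separable type-1 $C^*$-algebra $C_0(J,\mathcal A)$ from the preceding discussion, extend to general continuous linear forms by Jordan decomposition, and verify the norm identity and positivity criterion. The one small caution is that Jordan decomposition alone gives $\sum_j\|\ell_j\|\le 2\|\ell\|$ rather than the sharp equality $\|\ell\|_{C_0(J,\mathcal A)^*}=\|\Gamma d\gamma\|_{\mathcal M}$; you rightly flag this and close the gap by a polar-decomposition/approximation argument and by working with a common dominating scalar measure, which is the standard and intended way to finish.
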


\subsection{Symbolic properties of the eigenprojectors}
\label{subsec:preliminaries}

In this section, we analyse the  fields of the spectral projectors $\Pi_n^{(\lambda)}$ of $H(\lambda) $. 
We use the notion of homogeneous symbols introduced in~\cite{FF} (see Definition~4.1 therein). 

Following~\cite{FR} (Section 5.2 for any graded nilpotent Lie group and Section 6.5 for the Heisenberg group), the class $S^m$  of  {\it symbols 
of order $m$} in $G$ consists of   fields of operators 
$\sigma(x,\lambda)$ such that 
for each $\alpha,\beta\in \N^n$ and $\gamma\in \R$
we have
 \begin{equation}
\label{eq_def_Smrhodelta}
\sup_{x\in G, \lambda\in \widehat G}
\|({\rm Id} +H(\lambda))^{\frac{ [\alpha]-m +\gamma}2}
X_x^\beta\Delta^\alpha \sigma(x,\lambda) 
({\rm Id} +H(\lambda))^{-\frac{\gamma}2 }\|_{\mathcal L(\mathcal H_\lambda)}<\infty.
\end{equation}
Using the dilation induced on $\widehat G$ by the one of $G$, one defines for $m\in\R$,
 $m$-homogeneous fields of operators $\sigma(x,\lambda)$ by asking that 
$
\sigma(x,\eps\cdot \lambda) = \eps^{m} \sigma(x,\lambda)$
for all $x\in G$, almost all $\lambda\in \widehat G$ and $d\eps$-almost all $\eps>0$ (in the preceding formula, 
 $\eps\cdot \lambda=\eps^2\lambda$ for $\lambda\in {\mathfrak z}^*$ and $\eps\cdot (0,\omega)=(0,\eps\omega)$ for $\omega\in{\mathfrak v}^*$).
In parallel to what is done in the Euclidean setting, one then defines {\it regular 
$m$-homogeneous symbols} as the set $\dot S^m$ of $m$-homogeneous fields of operators  $\sigma(x,\lambda)$ which satisfy
for any $\alpha,\beta\in \N^n$, $\gamma\in \R$:
\begin{equation}
\label{eq_def_dotSmrhodelta}
\sup_{\substack{\lambda\in \widehat G\\ x\in G}}
\| H(\lambda)^{\frac{[\alpha]-m+\gamma}{2}}
X_x^\beta \Delta^\alpha \sigma(x,\lambda) H(\lambda)^{-\frac{\gamma}{2}}\|_{{\mathcal L}(\mathcal H_\lambda)} <\infty.
\end{equation}

In both the inhomogeneous and homogeneous case, it was proved that an equivalent characterisation is 
\eqref{eq_def_Smrhodelta} and \eqref{eq_def_dotSmrhodelta} respectively,
for all $\alpha,\beta$ but only $\gamma=0$.

\begin{proposition}
\label{prop_PiindotS}	
Let $n\in \N$.
 The spectral projectors $\Pi_n^{(\lambda)}$ associated with $H(\lambda)$, $\lambda\in {\mathfrak z}^*\setminus\{0\}$,   form a field $\Pi_n$ of operators which is a homogeneous symbol in $\dot S^0$.

Consequently, for any function $\psi\in C^\infty(\R)$ with $\psi\equiv 0$ on $(-\infty,1/2)$ and $\psi\equiv 1$ on $(1,+\infty)$, 
$\psi( H) \Pi_n$ is  in $S^0$.	
Moreover,  for every $\lambda\in \mathfrak z^*\setminus\{0\}$, 
 $\psi(  u H(\lambda)) \Pi_n $  converges to $  \Pi_{n}$ 
in the strong operator topology (SOT) of $L^2(\mathfrak p_\lambda)$ as $u\to 0$.
Furthermore, $\psi(  u H) \Pi_n$
 converges to $\Pi_n  $ 
in SOT for $L^\infty(\widehat G)$
as $u\to 0$.
\end{proposition}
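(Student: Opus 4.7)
The proof would unfold as follows. First I would fix the realization: on $\mathcal H_\lambda \cong L^2(\mathfrak p_\lambda)\cong L^2(\R^d)$, one has $H(\lambda)=|\lambda|\sum_j(-\partial_{\xi_j}^2+\xi_j^2)$, so its $n$-th eigenspace $\mathcal V_n$ is spanned by the standard Hermite functions of total degree $n$, \emph{independently of $\lambda$}. Consequently, in this realization, the spectral projector $\Pi_n^{(\lambda)}$ is the same orthogonal projector on $L^2(\R^d)$ for every $\lambda\in \mathfrak z^*\setminus\{0\}$. Homogeneity is then immediate: the dilation $\delta_\eps$ induces $\lambda\mapsto \eps^2\lambda$ on $\mathfrak z^*\setminus\{0\}$, under which $H(\eps^2\lambda)=\eps^2 H(\lambda)$ with eigenspaces preserved, hence $\Pi_n^{(\eps^2\lambda)}=\Pi_n^{(\lambda)}$, so $\Pi_n$ is $0$-homogeneous.

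Next I would verify the symbol estimates defining $\dot S^0$. As $\Pi_n$ is $x$-independent, $X_x^\beta \Pi_n=0$ for $\beta\ne 0$. The case $\alpha=0$ is trivial since $\Pi_n$ commutes with $H(\lambda)$, so $H^{\gamma/2}\Pi_n H^{-\gamma/2}=\Pi_n$, of operator norm $1$. For $\alpha\neq 0$, the difference operators $\Delta^\alpha$ correspond on the convolution-kernel side to multiplication by coordinate polynomials on $G$, and their action on $\Pi_n$ can be expressed through commutators with the infinitesimal representations $\pi^\lambda(V_j)$, which by \eqref{eq_piPQZ} are proportional to the creation/annihilation operators $\sqrt{|\lambda|}(\xi_j\pm\partial_{\xi_j})$. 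These raise or lower the Hermite index by one, so $\Delta^\alpha \Pi_n$ is a finite sum of building blocks of the form $\Pi_k\, A\, \Pi_{n'}$ with $|k-n|,|n'-n|\le [\alpha]$, each carrying an explicit power of $|\lambda|$ matching the weight $[\alpha]$. Comparing with the eigenvalues of $H^{[\alpha]/2}$ on $\Pi_k$ yields the required bound uniformly in $\lambda$. This step — turning difference operators into a finite weighted combination of Hermite block operators — is the main technical obstacle, and I would lean on the Hermite calculus in Appendix \ref{sec_Hermite+pflem_sigma_comH}.

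For the inhomogeneous statement, the functional calculus for $H(\lambda)$ gives $\psi(H(\lambda))\Pi_n = \psi\bigl(|\lambda|(2n+d)\bigr)\,\Pi_n$, a scalar multiple of the fixed projector $\Pi_n$. The cutoff vanishes for $|\lambda|\le 1/(2(2n+d))$, which restores smoothness at the origin $\lambda=0$ and absorbs the obstruction that prevents $\Pi_n$ itself from belonging to the inhomogeneous class $S^0$. Away from $\lambda=0$, the derivatives of $\lambda\mapsto \psi(|\lambda|(2n+d))$ are controlled and combine with the $\dot S^0$-estimates for $\Pi_n$ to give the $S^0$-bounds; I would invoke the equivalent characterisation using only $\gamma=0$ to shorten the verification.

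Finally, for the strong operator convergence, $\psi(uH(\lambda))\Pi_n - \Pi_n = \bigl(\psi(u|\lambda|(2n+d))-1\bigr)\Pi_n$, so the convergence reduces to a scalar limit: for fixed $\lambda\neq 0$, $u|\lambda|(2n+d)$ enters the plateau where $\psi\equiv 1$ in the appropriate regime of $u$, and the scalar factor tends to $0$, giving SOT convergence on $L^2(\mathfrak p_\lambda)$. For the SOT convergence at the level of $L^\infty(\widehat G)$, I would use that the scalar $\psi(u|\lambda|(2n+d))-1$ is uniformly bounded and converges to zero pointwise in $\lambda$, then apply dominated convergence after pairing with an element of the predual $L^1(\widehat G)\cdot \Pi_n$ (equivalently, test on Hilbert--Schmidt symbols), which suffices because $\|\Pi_n\|_{\mathcal L(\mathcal H_\lambda)}=1$ uniformly.
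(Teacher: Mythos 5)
Your observations that $\Pi_n^{(\lambda)}$ is $\lambda$-independent in the fixed realization on $L^2(\R^d)$, that this gives $0$-homogeneity immediately, that $\psi(H(\lambda))\Pi_n=\psi(|\lambda|(2n+d))\Pi_n$ is a scalar multiple of a fixed projector, and the SOT convergence reasoning are all correct and in the spirit of the paper. But the core of the proposition — that $\Pi_n$ satisfies the $\dot S^0$ seminorm estimates for $\alpha\neq 0$ — is where you leave a genuine gap, and the mechanism you sketch does not close it.

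The difficulty is that $\lambda$-independence of $\Pi_n^{(\lambda)}$ as an operator family does \emph{not} make $\Delta^\alpha\Pi_n$ trivial or easy to compute: the difference operators are defined on the convolution-kernel side, and the kernel of the $0$-order Fourier multiplier $\Pi_n$ is a nontrivial tempered distribution in all variables $(v,z)$, so multiplying it by coordinate monomials does not reduce to differentiating in $\lambda$. Your proposed replacement — "$\Delta^\alpha\Pi_n$ can be expressed through commutators with $\pi^\lambda(V_j)$" — is not established and is not correct as stated: on a step-$2$ group the commutator $[\pi^\lambda(V_k),\sigma]$ corresponds on the kernel side to the difference $(V_k-\tilde V_k)$, which on the Fourier side yields a $\lambda$-weighted combination of first-stratum difference operators (through $B(\lambda)$), not the difference operator $\Delta_{v_j}$ itself; and it does not reach the central difference operators $\Delta_{z_k}$ at all, since $\pi^\lambda(\mathcal Z^{(\lambda)})$ is scalar so all such commutators vanish. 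The reference you invoke, Appendix~\ref{sec_Hermite+pflem_sigma_comH}, contains Hermite recursion identities used to prove Lemma~\ref{lem_sigma_comH}; it does not provide any formula for $\Delta^\alpha\Pi_n$, so it cannot absorb what you acknowledge as "the main technical obstacle."

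The paper circumvents this entirely by a genuinely different route: write $\Pi_n^{(\lambda)}$ as a contour integral $\frac{1}{2i\pi}\oint_{\mathcal C_n}(|\lambda|^{-1}H(\lambda)-z)^{-1}dz$, reduce to showing $(H(\lambda)-|\lambda|z)^{-1}\in\dot S^{-2}$ with seminorms uniform over $z\in\mathcal C_n$, and then obtain the $\dot S^{-2}$ membership recursively via the Leibniz rule for $\Delta_q$ applied to the resolvent. The only atomic computations required are $\Delta_q H(\lambda)\in\dot S^{2-[q]}$ (Example~4.5 of~\cite{FF}) and $\Delta_q(|\lambda|I)\in\dot S^{2-[q]}$ (Lemma~\ref{lem_lambdaindotS}); the resolvent identity then propagates these. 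Multiplying by $|\lambda|\in\dot S^2$ brings the result to $\dot S^0$. If you want to pursue a Hermite-block argument instead, you would first need explicit formulas for the action of each difference operator on operator-valued symbols (as developed for the Heisenberg group in~\cite{FR} or~\cite{BFG1}) and then check the weight matching in each block — a workable but considerably longer route than the resolvent calculus, and one your proposal does not actually carry out.
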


\begin{remark} A remark on the notations: we shall use $\Pi_n$ when denoting the field of operators acting on $L^2(\mathfrak p_\lambda)$ and write $\Pi_n^{(\lambda)}$ when some $\lambda\in \mathfrak z^*$ is fixed. 
\end{remark}

The proof of Proposition \ref{prop_PiindotS} relies on 
the spectral expression:
\begin{equation}
	\label{eq_Pin_Cn}
\Pi_n^{(\lambda)}= {1\over 2i\pi} \oint _{{\mathcal C}_n} (|\lambda|^{-1}H(\lambda)-z)^{-1} dz, 
\end{equation}
where ${\mathcal C}_n$ is  any circle of the complex plane with centre $2n+d$ and  radius $\rho\in(0,2)$, 
and the following lemma:

\begin{lemma}
\label{lem_lambdaindotS}
The field of operators $|\lambda|I_{L^2(\mathfrak p_\lambda)}$, $\lambda\in {\mathfrak z}^*\setminus\{0\}$, yields a homogeneous regular symbol in $\dot S^{2}$.
\end{lemma}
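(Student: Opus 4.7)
The claim has two parts: degree-$2$ homogeneity of the field $\sigma(\lambda):=|\lambda|I_{L^2(\mathfrak p_\lambda)}$, and the regularity estimates \eqref{eq_def_dotSmrhodelta} with $m=2$. Homogeneity is immediate: the induced dilation on $\widehat G$ sends $\lambda\in\mathfrak z^*\setminus\{0\}$ to $\eps\cdot\lambda=\eps^2\lambda$, so $\sigma(\eps\cdot\lambda)=\eps^2\sigma(\lambda)$.

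For the regularity estimates, my plan rests on the factorization
$$\sigma(\lambda)=H(\lambda)\,H_0^{-1},\qquad H_0:=\sum_{j=1}^d\bigl(-\partial_{\xi_j}^2+\xi_j^2\bigr),$$
which follows directly from \eqref{def:H2}. The operator $H_0$ acts on $L^2(\R^d)\simeq L^2(\mathfrak p_\lambda)$ independently of $\lambda$, is self-adjoint, commutes with $H(\lambda)$, preserves its spectral decomposition, and satisfies $H_0\geq d\cdot I$, so $\|H_0^{-1}\|\leq 1/d$. Since $\sigma$ is $x$-independent, only $\beta=0$ is relevant; and because $\sigma$ commutes with $H(\lambda)$, the auxiliary parameter $\gamma$ plays no role. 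For $\alpha=0$ this immediately gives
$$\bigl\|H(\lambda)^{-1}\sigma(\lambda)\bigr\|_{\mathcal L(\mathcal H_\lambda)}=\|H_0^{-1}\|_{\mathcal L(L^2(\R^d))}\leq 1/d,$$
uniformly in $\lambda$. For $\alpha\neq 0$, I would reduce to the corresponding estimates for $H\in \dot S^2$, which hold because $H(\lambda)=\pi^\lambda(-\Delta_G)$ is the infinitesimal image of a left-invariant, degree-$2$-homogeneous differential operator, and therefore lies in $\dot S^2$ by the symbolic calculus of \cite{FR}. Using a Leibniz rule for difference operators together with the $\lambda$-independence of $H_0^{-1}$, each $\Delta^\alpha\sigma$ decomposes as a finite sum of products of $\Delta^{\alpha'}H(\lambda)$ with $H_0^{-1}$ (plus terms where the $v$-differences act on $H_0^{-1}$, discussed below), and the required weighted estimates transfer from those for $H\in \dot S^2$ with a multiplicative factor bounded by $\|H_0^{-1}\|\leq 1/d$.

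The main technical point is to handle the $v$-directional differences $\Delta_{v_j}$ when they land on $H_0^{-1}$: these are not pure $\lambda$-derivatives but operator-valued quantities built from $\pi^\lambda(V_j)=\sqrt{|\lambda|}\,\partial_{\xi_j}$ or $i\sqrt{|\lambda|}\,\xi_j$, so one must verify that they combine correctly with the weights $H(\lambda)^{\pm s}\sim|\lambda|^{\pm s}H_0^{\pm s}$; the scalar factors $\sqrt{|\lambda|}^{\,\pm 1}$ are exactly absorbed by the $H(\lambda)^{\pm 1/2}$ weights matched to $[\alpha]$, and the remaining operator-valued pieces are controlled by standard estimates for the harmonic oscillator. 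A more direct alternative that bypasses this computation is to observe that $|\lambda|^2 I=\pi^\lambda\bigl(-\sum_{j=1}^p Z_j^2\bigr)$ lies in $\dot S^4$ by \cite{FR} and then pass to its positive square root in $\dot S^2$ via functional calculus.
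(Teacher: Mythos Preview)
Your factorization $|\lambda|I=H(\lambda)\,H_0^{-1}$ is correct, but the argument built on it is circular. The difference operators $\Delta^\alpha$ are defined via multiplication of the convolution kernel by the coordinate monomial, and for the Leibniz expansion of $\Delta^\alpha(H\,H_0^{-1})$ to close you must control $\Delta^{\alpha'}H_0^{-1}$ with the appropriate $H(\lambda)$-weights --- i.e.\ you must show $H_0^{-1}\in\dot S^0$. But $H_0^{-1}=|\lambda|\,H(\lambda)^{-1}$, so knowing $H_0^{-1}\in\dot S^0$ is \emph{equivalent} (via composition with $H(\lambda)^{\pm1}\in\dot S^{\pm2}$) to the statement you are trying to prove. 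The claim that $H_0^{-1}$ is ``$\lambda$-independent'' and therefore easy is misleading: the identification $L^2(\mathfrak p_\lambda)\simeq L^2(\R^d)$ uses the $\lambda$-dependent basis $(P_j,Q_j)$, and the convolution kernel of $H_0^{-1}$ on $G$ is a genuine (non-central) distribution, so $\Delta_{v_j}H_0^{-1}\neq0$. Your description of $\Delta_{v_j}$ as ``built from $\pi^\lambda(V_j)$'' conflates two different operations: the difference operator $\Delta_{v_j}\sigma=\mathcal F(v_j\kappa)$ is multiplication of the kernel, not composition with $\pi^\lambda(V_j)$. The square-root alternative has the same defect: proving $|\lambda|^2I\in\dot S^4$ already requires the kernel argument below, and passing from $\dot S^4$ to $\dot S^2$ by functional calculus is not a free move in these classes.

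The paper's proof bypasses all of this by observing that the convolution kernel of $\sigma=|\lambda|I$ is the \emph{central} distribution $\delta_{v=0}\otimes\mathcal F_{\mathfrak z}^{-1}|\lambda|$. Consequently $\Delta_{v_j}\sigma=0$ for every first-layer coordinate, while $\Delta_{z_k}\sigma$ is (up to a constant) the Fourier transform of $\delta_{v=0}\otimes\mathcal F_{\mathfrak z}^{-1}\partial_{\lambda_k}|\lambda|$, so the required estimate reduces to the elementary bound $\sup_{\lambda\neq0}|\partial_{\lambda_k}|\lambda||<\infty$. Iterating, each $\Delta^\alpha\sigma$ either vanishes or is a scalar multiple of the identity given by a homogeneous derivative of $|\lambda|$, and the seminorm bounds \eqref{eq_def_dotSmrhodelta} follow at once. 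The key idea you are missing is precisely this centrality of the kernel, which makes the $v$-differences trivial rather than something to be estimated.
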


\begin{proof}[Proof of Lemma \ref{lem_lambdaindotS}]
The first thing to notice is that $|\lambda| H(\lambda)^{-1}$ is a bounded operator (as a self-adjoint operator with bounded eigenvalues). Then, we look at $\Delta_{q}|\lambda|I_{L^2(\mathfrak p_\lambda)}$.
The kernel corresponding to the symbol 
$|\lambda|I_{L^2(\mathfrak p_\lambda)}$ is the distribution $\delta_{v=0}\otimes {\mathcal F}^{-1}_{\mathfrak z} |\lambda|$, 
so the corresponding convolution operator on $G$ acts only on the central component.
Furthermore $\Delta_{q}|\lambda|I_{L^2(\mathfrak p_\lambda)}=0$ for $q=v_j$. 
For $q=z_k$,
$\Delta_{q}|\lambda|I_{L^2(\mathfrak p_\lambda)}$ is up to a constant the group Fourier transform of the distribution 
$\delta_{v=0}\otimes {\mathcal F}^{-1}_{\mathfrak z} \partial_{\lambda_k} |\lambda|$, so
\begin{align*}
\sup_{\lambda\in {\mathfrak z}^*\setminus\{0\}}
\|\Delta_{q}|\lambda|I_{L^2(\mathfrak p_\lambda)}\|_{{\mathcal L}(L^2(\mathfrak p_\lambda)}
&=
\| f\mapsto f* \left(\delta_{v=0}\otimes {\mathcal F}^{-1}_{\mathfrak z} \partial_{\lambda_k} |\lambda|\right)\|_{{\mathcal L}(L^2(G))}
\\&\leq
\| g\mapsto g* \left({\mathcal F}^{-1}_{\mathfrak z} \partial_{\lambda_k} |\lambda|\right)\|_{{\mathcal L}(L^2(\mathfrak z))}
=\sup_{\lambda\in \mathfrak z^*\setminus \{0\}}\partial_{\lambda_k} |\lambda|
<\infty.
\end{align*}
For $q=z_jz_k$,
$\Delta_{q}|\lambda|I_{L^2(\mathfrak p_\lambda)}$ is up to a constant the group Fourier transform of the distribution 
$\delta_{v=0}\otimes {\mathcal F}^{-1}_{\mathfrak z} \partial_{\lambda_j}\partial_{\lambda_k} |\lambda|$.
We see that for $q=z_k$, $[q]=2$ and 
$$
H(\lambda)^{-1+\frac{[q]}2} \Delta_{q}|\lambda| = \Delta_{q}|\lambda|.
$$
Therefore 
$$
\sup_{\lambda\in {\mathfrak z}^*}\|H(\lambda)^{-1+\frac{[q]}2}\Delta_{q}|\lambda|I_{L^2(\mathfrak p_\lambda)}\|_{{\mathcal L}(L^2(\mathfrak p_\lambda))}<\infty.
$$
Recursively, we obtain for any monomial $q$, we have the estimates required by~(\ref{eq_def_dotSmrhodelta}):
$$
\sup_{\lambda\in {\mathfrak z}^*}\|H(\lambda)^{-1+\frac{[q]}2}\Delta_{q}|\lambda|I_{L^2(\mathfrak p_\lambda)}\|_{{\mathcal L}(L^2(\mathfrak p_\lambda))}<\infty,
$$  
where $[q]$ denotes the degree of homogeneity of $q$. 
This shows Lemma \ref{lem_lambdaindotS}.
\end{proof}

\begin{proof}[Proof of Proposition \ref{prop_PiindotS}]
In order to show that the field of operators consisting of the~$\Pi_n^{(\lambda)}$ is in~$\dot S^0$, 
it suffices to show that 
$\left(H(\lambda)-|\lambda|z\right)^{-1}$
 is in $\dot S^2$ 
with uniform semi-norms estimates of the form~(\ref{eq_def_dotSmrhodelta})
with respect to $z\in {\mathcal C}_n$ 
because of~\eqref{eq_Pin_Cn}, Lemma \ref{lem_lambdaindotS}
and 
$$
\left(|\lambda|^{-1}H(\lambda)-z\right)^{-1} 
= 
|\lambda|\left(H(\lambda)-|\lambda|z\right)^{-1}. 
$$
The rest of the statement will then follow by \cite{FF} (see Section~4.2 therein).

\medskip 

The semi-norms estimates of the form~(\ref{eq_def_dotSmrhodelta}) with $\alpha=\beta=0$ and $\gamma=0$ are satisfied for all $z\in {\mathcal C}_n$ with
$$
\sup_{\lambda\in \mathfrak z^*\setminus\{0\}}
\|H(\lambda)\left(H(\lambda)-|\lambda|z\right)^{-1}\|_{{\mathcal L}(L^2(\mathfrak p_\lambda))}
\leq \sup_{n\in \N} \frac{2n+d}{2n+d-\rho}<\infty.
$$
For $q=v_j$ or $z_k$, 
since $\Delta_q I_{L^2(\mathfrak p_\lambda)}=0$,
the Leibniz formula implies
$$
\Delta_q\left(H(\lambda)-|\lambda|z\right)^{-1}
=
-\left(H(\lambda)-|\lambda|z\right)^{-1} 
\Delta_q\left(H(\lambda)-|\lambda|z\right)
\left(H(\lambda)-|\lambda|z\right)^{-1}.
$$
Note that 
$$
\Delta_q\left(H(\lambda)-|\lambda|z\right)
=
\Delta_q H(\lambda)-
z\Delta_q|\lambda|I_{L^2(\mathfrak p_\lambda)},
$$
and that both terms in the right-hand side 
are in $\dot S^{2-[q]}$
(for the first one see \cite{FF}, Example~4.5, for the second  by Lemma \ref{lem_lambdaindotS}), 
so $\Delta_q\left(H(\lambda)-|\lambda|z\right)\in \dot S^{2-[q]}$.
This and the estimates above yield that
\begin{align*}
&\|H(\lambda)^{1+\frac {[q]}2} \Delta_q\left(H(\lambda)-|\lambda|z\right)^{-1} \|_{{\mathcal L}(L^2(\mathfrak p_\lambda))}
\\&\quad\leq 
\|H(\lambda)\left(H(\lambda)-|\lambda|z\right)^{-1} \|_{{\mathcal L}(L^2(\mathfrak p_\lambda))}^2 
\ \| H(\lambda)^
{\frac{[q]}{2}}
\Delta_q\left(H(\lambda)-|\lambda|z\right)H(\lambda)^{-1} \|_{{\mathcal L}(L^2(\mathfrak p_\lambda))},
 \end{align*}
so its supremum over $\lambda\not=0$ is finite.
Proceeding recursively shows
$(H(\lambda)-|\lambda|z)^{-1}\in \dot S^{-2}$ and thus $(|\lambda|^{-1}H(\lambda)-z)^{-1}\in\dot S^0$, 
with uniform semi-norms estimates 
with respect to $z\in {\mathcal C}_n$.
This yields Part (1).
\end{proof}

\subsection{Approximation of $H$-diagonals and anti-$H$-diagonals symbols}
\label{subsec_approxPinproj}
We now use the symbolic properties of Section \ref{subsec:preliminaries} for the projections $\Pi_n^{(\lambda)}$  to decompose symbols as described in Lemma~\ref{lem_sigmad+a}. 

We first notice that 
if $\sigma=\{\sigma(\lambda), \lambda\in \widehat G\}\in S^{-\infty}$  is a smoothing symbol independent of $x$, we can then define the symbol
$\sigma^{(n,n')}:=\Pi_n\sigma \Pi_{n'}$
for each $n,n'$.
For $n=n'$, $\sigma^{(n,n')}$ commutes with $H$; 
for general pairs of integers $(n,n')$, it satisfies
$$	
H(\lambda) \, \sigma^{(n,n')} (\lambda) =|\lambda| (2n+d) \sigma^{(n,n')}(\lambda)
\quad\mbox{and}\quad
\sigma^{(n,n')} H(\lambda) = |\lambda|  (2n'+d) \sigma^{(n,n')}(\lambda).
$$
However, $\sigma^{(n,n')}$ is usually not smoothing but may be weakly approximated by the smoothing symbols $\psi(u H)\Pi_n  \sigma \Pi_{n'}\psi(u H)$ with $u\to 0$ with $\psi$ as in Proposition \ref{prop_PiindotS}.

\medskip

Proposition \ref{prop_PiindotS} allows us to modify the previous construction to symbols which also depend on $x\in G$ in order to obtain  the following weak approximation of the $\Pi_n$-projections of smoothing symbols:

\begin{corollary}
\label{cor_prop_PiindotS}
We fix a smooth function $\psi:\R \to [0,1]$
satisfying $\psi\equiv 0$ on $(-\infty,1/2)$ and $\psi\equiv 1$ on $(1,+\infty)$.
Let $\sigma \in S^{-\infty}$.

\begin{enumerate}
\item For each $n,n'\in \N$ and $u\in (0,1]$, 
	the symbol
\begin{equation}
\label{eq_sigmann'u}	
\sigma^{(n,n',u)} :=
\psi(u H)\Pi_n  \sigma \Pi_{n'}\psi(u H)
\end{equation}
is smoothing, i.e. $\sigma^{(n,n',u)}\in S^{-\infty}$. 
Moreover,  for every $x\in G$ and $\lambda\in \mathfrak z^*\setminus\{0\}$, 
 $\sigma^{(n,n',u)}(x,\lambda)$  converges to $ \sigma^{(n,n')}(x,\lambda):=\Pi_n  \sigma(\lambda) \Pi_{n'}$ 
in SOT of $L^2(\mathfrak p_\lambda)$ as $u\to 0$;
and $\sigma^{(n,n',u)}(x,\cdot)$  converges to $ \sigma^{(n,n')}=\Pi_n  \sigma(x,\cdot) \Pi_{n'}$ 
in SOT for $L^\infty(\widehat G)$
as $u\to 0$, for every $x\in G$.
\item If $\sigma \in \mathcal A_0$, then 
$\sigma^{(n,n',u)}\in \mathcal A_0$ for every $n,n'\in \N$ and $u\in (0,1)$.
\item If $\sigma$ is supported in $K\times (\mathfrak z^*\setminus\{0\})$ with $K$ a compact of $G$ and vanish identically near $\lambda=0$, then 
$\sigma^{(n,n',u)} \in \mathcal A_H$
for every $n,n'\in \N$ and $u\in (0,1)$.

\item If $\sigma\in \mathcal A_H$, 
we can write $\sigma$ as the finite sum 
$\sigma =\sum_{n,n'} \sigma^{(n,n')}$ 
and for $u$ small enough we have 
$\sigma^{(n,n')} = \sigma^{(n,n',u)}$.	

\item If $\sigma\in \mathcal A$ then $\sigma^{(n,n',u)}\in \mathcal A$ for every $n,n'\in \N$ and $u\in (0,1)$. If in addition $\sigma$ is a positive element in the $C^*$-algebra $\mathcal A$, then so is $\sigma^{(n,n',u)}$ for $n=n'$.
\end{enumerate}
\end{corollary}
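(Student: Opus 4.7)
My plan is to unpack the five claims as consequences of Proposition \ref{prop_PiindotS} combined with the symbolic calculus for the classes $S^m$ and with the algebraic structure of $\mathcal A_0$, $\mathcal A_H$, $\mathcal A$. The central input I will use throughout is that $\psi(uH)\Pi_n$ lies in $S^0$ and converges in SOT to $\Pi_n$ as $u$ approaches the limit specified in Proposition \ref{prop_PiindotS}.

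For Part (1), I would invoke the product rule $S^0\cdot S^{-\infty}\cdot S^0\subset S^{-\infty}$ to deduce $\sigma^{(n,n',u)}\in S^{-\infty}$ from $\psi(uH)\Pi_n\in S^0$ and $\sigma\in S^{-\infty}$. The pointwise SOT convergence $\sigma^{(n,n',u)}(x,\lambda)\to \Pi_n\sigma(x,\lambda)\Pi_{n'}$ on $L^2(\mathfrak p_\lambda)$ then follows from continuity of multiplication in SOT against the uniformly bounded operators $\psi(uH(\lambda))\Pi_n$, and the SOT convergence inside $L^\infty(\widehat G)$ for fixed $x$ is the global version of the same observation. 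Part (2) is then immediate, since $\mathcal A_0$ consists of elements of $S^{-\infty}$ with smooth compactly supported dependence in $x$, and the operation $\sigma\mapsto \psi(uH)\Pi_n\sigma\Pi_{n'}\psi(uH)$ is $x$-independent so preserves both properties.

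For Part (3), I would check the two clauses of Definition \ref{def_AH} separately. Vanishing of $\sigma^{(n,n',u)}$ near $\lambda=0$ is inherited from $\sigma$, which appears as a factor. For the finite rank condition, the key observation is that $\psi(uH)$ commutes with every $\Pi_m$, so
\begin{equation*}
\Pi_m^{(\lambda)}\sigma^{(n,n',u)}(x,\lambda)\Pi_{m'}^{(\lambda)}=\psi(uH(\lambda))\Pi_m\Pi_n\,\sigma(x,\lambda)\,\Pi_{n'}\Pi_{m'}\psi(uH(\lambda)),
\end{equation*}
which vanishes unless $(m,m')=(n,n')$ by orthogonality of the spectral projectors. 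For Part (4), if $\sigma\in \mathcal A_H$, then Definition \ref{def_AH} forces a finite decomposition $\sigma=\sum_{(n,n')\in F}\sigma^{(n,n')}$ with $F\subset\N^2$ finite and $\sigma$ supported where $|\lambda|\geq c>0$; on this region and for $(n,n')\in F$, the scalars $\psi(u|\lambda|(2n+d))$ equal $1$ once $u$ is taken in the appropriate regime, whence $\sigma^{(n,n',u)}=\sigma^{(n,n')}$.

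Finally, for Part (5), Part (2) already gives $\sigma^{(n,n',u)}\in \mathcal A_0\subset \mathcal A$ for $\sigma\in \mathcal A_0$, and the operation $\sigma\mapsto \sigma^{(n,n',u)}$ is continuous in $\|\cdot\|_{\mathcal A}$ with operator norm bounded by $\|\psi(uH)\Pi_n\|^2\leq 1$; the conclusion for $\sigma\in \mathcal A$ follows by density of $\mathcal A_0$ in $\mathcal A$. For positivity when $n=n'$, the operators $\Pi_n$ and $\psi(uH)$ are commuting self-adjoint elements, so setting $A:=\Pi_n\psi(uH)=A^*$ we have $\sigma^{(n,n,u)}(x,\lambda)=A(\lambda)\sigma(x,\lambda)A(\lambda)$, which is a positive operator on $\mathcal H_\lambda$ whenever $\sigma(x,\lambda)\geq 0$. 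The step I expect to be the most delicate is Part (4), where the interplay between the support condition in $\lambda$ and the finite-index structure of $\mathcal A_H$ must be quantified carefully to identify a suitable threshold for $u$.
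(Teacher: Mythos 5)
Your proposal is correct and follows essentially the same route as the paper, whose own proof is a single terse paragraph reducing Parts (1)--(4) to Proposition~\ref{prop_PiindotS} together with the product calculus $S^0\cdot S^{-\infty}\cdot S^0\subset S^{-\infty}$, and settling Part~(5) via linearity, the bound $\|\sigma^{(n,n',u)}(x,\cdot)\|_{L^\infty(\widehat G)}\le\|\sigma(x,\cdot)\|_{L^\infty(\widehat G)}$, and the observation that $(\sigma^*\sigma)^{(n,n,u)}\ge 0$. Your positivity argument --- writing $\sigma^{(n,n,u)}=A\sigma A$ with $A:=\Pi_n\psi(uH)$ self-adjoint and commuting with $H$ --- is an equivalent, slightly more direct reformulation of the paper's use of $\sigma^*\sigma$, and your reading of Part~(4) (finite index set plus the support bound $|\lambda|\ge c$ forcing $\psi(u|\lambda|(2n+d))\equiv 1$ at the relevant $u$-threshold) is the intended mechanism, which you wisely leave phrased as ``the appropriate regime'' given that the direction of the $u$-limit in the statement of Proposition~\ref{prop_PiindotS} reads ambiguously against the stated cutoff profile of $\psi$.
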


\begin{proof}
Part (1) follows from 	Proposition \ref{prop_PiindotS} and \cite{FF}.
Parts (2), (3) and (4) follows from Part (1).
Let $\sigma\in \mathcal A$.
Since  $\sigma\mapsto \sigma^{(n,n',u)}$ is linear and 
$\|\sigma^{(n,n',u)}(x,\cdot)\|_{L^\infty(\widehat G)}\leq 
\|\sigma(x,\cdot)\|_{L^\infty(\widehat G)}$, 
 $\sigma^{(n,n',u)}\in \mathcal A$.
Furthermore, for any $\sigma\in \mathcal A$, 
$(\sigma^*\sigma)^{(n,n,u)}$ is positive in $C_0(G)\otimes L^\infty(\widehat G)$, therefore also in $\mathcal A$. 
This shows Part (5). 
\end{proof}

\begin{remark}
\label{rem_pf_lem_sigmad+a}
	Note that Part (4) of Corollary \ref{cor_prop_PiindotS} proves Lemma \ref{lem_sigmad+a}.
\end{remark}

The same ideas also gives the approximations of symbols in $\mathcal A_0$ by symbols in $\mathcal A_H$:

\begin{corollary}
	\label{cor_cv_AH0}	Let $\sigma \in {\mathcal A}_0$.
Then there exists a sequence $(\sigma_n)_{n\in \N}$ of symbols 
 in ${\mathcal A}_H$ converging to $\sigma$ 
in the following sense:
\begin{itemize}
\item 	for each $\lambda\in \mathfrak z^*\setminus\{0\}$, we have the convergence 
$\sigma_n(x,\lambda) \longrightarrow \sigma(x,\lambda)$
in SOT of $L^2(\mathfrak p_\lambda)$ as $n\to +\infty$ uniformly in $x\in G$, and 
\item we have the convergence 
$\sigma_n (x,\cdot) \to \sigma(x,\cdot)$
in SOT of $L^\infty(\widehat G)$ as $n\to +\infty$ uniformly in $x\in G$.
\end{itemize}
\end{corollary}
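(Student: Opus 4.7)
The plan is to construct an explicit approximating sequence by combining two truncations: a block truncation keeping only finitely many blocks $\Pi_n \sigma \Pi_{n'}$, and the spectral cut-off $\psi(uH)$ from Corollary \ref{cor_prop_PiindotS} which simultaneously annihilates a neighborhood of $\lambda=0$. For integers $N\geq 1$ and $u\in(0,1]$, set
$$
\sigma_{N,u}(x,\lambda) := \psi(uH(\lambda))\,\Big(\sum_{n,n'\leq N} \Pi_n^{(\lambda)}\,\sigma(x,\lambda)\,\Pi_{n'}^{(\lambda)}\Big)\,\psi(uH(\lambda)).
$$

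First I would verify that $\sigma_{N,u}\in\mathcal A_H$. The $x$-support is contained in that of $\sigma$, hence compact; $\psi(uH(\lambda))$ vanishes on the set $\{H(\lambda)\leq 1/(2u)\}$, which contains a neighborhood of $\lambda=0$; and only finitely many $(n,n')$-blocks with $n,n'\leq N$ contribute. That $\sigma_{N,u}$ lies in $\mathcal A_0$ follows from Part (2) of Corollary \ref{cor_prop_PiindotS} applied to each block and the finiteness of the sum, hence $\sigma_{N,u}\in \mathcal A_H$ by Definition \ref{def_AH}.

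Next, I would show the two SOT convergences for a well-chosen diagonal sequence $\sigma_n := \sigma_{N_n,u_n}$ with $N_n\to\infty$ and $u_n\to 0$. The key tool is the telescoping decomposition
$$
\sigma_{N,u}-\sigma \;=\; \psi(uH)\bigl(P_N\,\sigma\,P_N - \sigma\bigr)\psi(uH) \;+\; \bigl(\psi(uH)\,\sigma\,\psi(uH) - \sigma\bigr),
$$
with $P_N=\sum_{n\leq N}\Pi_n^{(\lambda)}$. For each fixed $\lambda\in \mathfrak z^*\setminus\{0\}$ and each $v\in L^2(\mathfrak p_\lambda)$, the second term tends to $0$ by Proposition \ref{prop_PiindotS} (strong convergence $\psi(uH(\lambda))\to \mathrm{Id}$ as $u\to 0$), while the first tends to $0$ because $P_N\to \mathrm{Id}$ in SOT by the spectral theorem for $H(\lambda)$ and $\psi(uH(\lambda))$ is uniformly bounded by $1$. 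The analogous argument at the level of the left-invariant operators on $L^2(G)$ associated by $\mathrm{Op}_1$ to the various symbols, again using Proposition \ref{prop_PiindotS}, gives the SOT convergence in $L^\infty(\widehat G)$.

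The most delicate point, and what I expect to be the main obstacle, is promoting these pointwise SOT convergences to ones that are uniform in $x\in G$. The crucial input here is that $\sigma\in\mathcal A_0$ means $x\mapsto \kappa_x$ is smooth and compactly supported with values in $\mathcal S(G)$, so the family $\{\sigma(x,\cdot)\,:\,x\in G\}$ is relatively compact in a sufficiently strong norm. An equicontinuity argument — writing each difference $(\sigma_n-\sigma)(x,\lambda)v$ as the sum of terms in which one factor runs over a relatively compact family while the other is a uniformly bounded SOT-null sequence — then upgrades the pointwise convergence to one that is uniform in $x$. Once this uniformity is secured, a standard diagonal extraction on $(N,u)$ produces the desired sequence $(\sigma_n)$.
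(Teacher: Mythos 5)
Your construction is essentially the one the paper has in mind: build $\sigma_{N,u}$ from the blocks $\psi(uH)\Pi_n\sigma\Pi_{n'}\psi(uH)$ of Corollary~\ref{cor_prop_PiindotS}, take the finite sum over $n,n'\leq N$, then let $N\to\infty$ and $u\to 0$ along a diagonal. The only notable divergence is that the paper also invokes Lemma~\ref{lem_g(lambda)} to multiply by an explicit cut-off $g\in C_c^\infty(\mathfrak z^*\setminus\{0\})$, whereas you rely on $\psi(uH)\Pi_n$ alone to kill a punctured neighbourhood of $\lambda=0$. That shortcut is legitimate, but for the wrong reason as you state it: the operator $\psi(uH(\lambda))$ is \emph{not} the zero operator for small nonzero $\lambda$ (the spectrum of $H(\lambda)$ is unbounded for every $\lambda\neq 0$, so there are always eigenvalues exceeding $1/(2u)$). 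What actually vanishes is the \emph{composition} with the finite-rank projection:
$$
\psi(uH(\lambda))\,\Pi_n^{(\lambda)} = \psi\bigl(u|\lambda|(2n+d)\bigr)\,\Pi_n^{(\lambda)} = 0
\quad\text{for}\quad |\lambda| < \tfrac{1}{2u(2n+d)},
$$
and since $n\leq N$, the whole $\sigma_{N,u}(x,\lambda)$ vanishes on $0<|\lambda|<\tfrac{1}{2u(2N+d)}$ (and hence also at $(0,\omega)$ by continuity of the kernel side). You should restate the argument this way. The remaining points — membership of $\sigma_{N,u}$ in $\mathcal A_0$ via Part~(2) of Corollary~\ref{cor_prop_PiindotS}, the block structure, and the telescoping identity with $A_n-I$ uniformly bounded — are all sound. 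The uniformity in $x$ that you flag as the ``most delicate point'' is handled correctly in spirit, though the word ``equicontinuity'' is misleading: what you actually need is that $\{\sigma(x,\lambda)\xi : x\in \mathrm{supp}_x\,\sigma\}$ is relatively compact in $L^2(\mathfrak p_\lambda)$ (a continuous image of a compact set), plus the fact that a uniformly bounded SOT-null sequence converges to zero uniformly on compact subsets of the Hilbert space. In the paper this uniformity is instead imported directly from the already-uniform statements in Proposition~\ref{prop_PiindotS} and Lemma~\ref{lem_g(lambda)} Part~(2); your from-scratch version is slightly longer but equivalent.
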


Corollary \ref{cor_cv_AH0} 
follows from Proposition \ref{prop_PiindotS} and Proposition~4.6 of~\cite{FF} together with a smooth cut-off in $\lambda\in \mathfrak z^*$.
The latter property is granted by the following lemma:

\begin{lemma}
\label{lem_g(lambda)}
Let $\sigma\in {\mathcal A}_0$.
\begin{enumerate}
\item 
For any $g\in {\mathcal S}({\mathfrak z}^*)$, 
 the symbol given by $g(\lambda)\sigma(x,\lambda)$ is in ${\mathcal A}_0$.
\item Let $(g_n)_{n\in \N}\subset C^\infty_c({\mathfrak z}^*\setminus\{0\})$ be a sequence of functions which are bounded by 1 and converges to the constant function 1 pointwise.
Then $(g_n \sigma(x,\cdot))_{n\in \N}$ converges to  $\sigma(x,\cdot)$ in SOT of $L^\infty(\widehat G)$ uniformly in $x\in G$.
\end{enumerate}
\end{lemma}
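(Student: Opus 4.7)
The overall strategy is to exploit the Fourier-side structure of $\mathcal A_0$: any $\sigma\in\mathcal A_0$ is of the form $\sigma(x,\lambda)=\mathcal F\kappa_x(\lambda)$ for a smooth compactly-supported family $x\mapsto \kappa_x$ in $\mathcal S(G)$. For Part (1) I construct the new convolution kernel from $\kappa_x$, and for Part (2) I invoke the Plancherel theorem together with dominated convergence.

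For Part (1), I introduce the tempered distribution $\mu_g := \delta_{\mathfrak v=0}\otimes \check g$ on $G \simeq \mathfrak v \times \mathfrak z$, where $\check g\in\mathcal S(\mathfrak z)$ is the inverse Euclidean Fourier transform of $g$ on the centre. Using that $\pi^\lambda_{{\rm Exp}(Z)}=e^{i\lambda(z)}I$ for $Z\in\mathfrak z$, a direct computation gives $\mathcal F\mu_g(\lambda)=g(\lambda)I$. The convolution formula \eqref{fourconv} then yields $g(\lambda)\sigma(x,\lambda)=\mathcal F(\kappa_x*\mu_g)(\lambda)$. Because central elements commute with every element of $G$, the Campbell--Baker--Hausdorff formula implies that $\kappa_x*\mu_g$ reduces to a Euclidean convolution between $\kappa_x$ and $\check g$ in the $z$-variable only, which preserves Schwartz regularity, smooth dependence on $x$, and the compact $x$-support. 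Hence $\tilde\kappa_x:=\kappa_x*\mu_g$ realises $g\sigma$ as an element of $\mathcal A_0$.

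For Part (2), I use the Plancherel identification of $L^\infty(\widehat G)$ with the von Neumann algebra $\mathcal L(L^2(G))^G$, under which SOT convergence $a_n\to a$ translates into $\|(a_n-a)\mathcal F h\|_{L^2(\widehat G)}\to 0$ for each $h\in L^2(G)$. The task thus reduces to proving that
\begin{equation*}
\int_{\mathfrak z^*\setminus\{0\}} |g_n(\lambda)-1|^2\,\|\sigma(x,\lambda)\mathcal Fh(\lambda)\|_{HS}^2\,|\lambda|^d\,d\lambda \Tend{n}{+\infty}0
\end{equation*}
uniformly in $x\in G$. Bounding $\|\sigma(x,\lambda)\|_{\mathcal L(\mathcal H_\lambda)}\le \|\sigma\|_{\mathcal A}$ by \eqref{eq_normA} and using $|g_n|\le 1$, the integrand is dominated by the $x$-independent integrable function $4\|\sigma\|_{\mathcal A}^2\,\|\mathcal Fh(\lambda)\|_{HS}^2\,|\lambda|^d$ (integrable by the Plancherel formula \eqref{Plancherelformula}), while $g_n(\lambda)\to 1$ pointwise on $\mathfrak z^*\setminus\{0\}$, a set of full Plancherel mass. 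Dominated convergence then delivers the convergence, with uniformity in $x$ automatic since the dominant does not depend on $x$.

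The only mildly delicate step is the rigorous verification that $\mathcal F\mu_g=gI$ in Part (1), since $\mu_g$ is a tempered distribution rather than a Schwartz function; this is readily justified either by pairing with test functions or by observing that $f\mapsto f*\mu_g$ is the Fourier multiplier $g$ acting on the central variable, a bounded left-invariant operator whose symbol is $g(\lambda)I$. No other step presents a serious obstacle.
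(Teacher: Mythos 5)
Your proof is correct and takes essentially the same approach as the paper's own (very terse) argument: Part (1) by identifying the central distribution $\delta_{\mathfrak v=0}\otimes\mathcal F_{\mathfrak z}^{-1}g$ as the convolution kernel of $g(\lambda)$ and convolving, Part (2) by Plancherel and dominated convergence. You have simply filled in the details that the paper leaves implicit.
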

\begin{proof}
Let $\kappa_\sigma$ be 
the kernels associated with the symbols which may be identified with a map in $C_c^\infty(G;{\mathcal S}(G))$.
Part (1) follows from 
the kernel $\kappa_g$ associated with the symbol
$g(\lambda)$ being the central distribution $\delta_{v=0}\otimes {\mathcal F}_{\mathfrak z}^{-1} g$ and 
  the kernel associated with $g(\lambda)\sigma$ being $\kappa_g * \kappa_\sigma$.
  Part (2) follows from the Lebesgue dominated convergence theorem and the Plancherel formula.
\end{proof}

\begin{remark}
\label{rem_f(H)}
		If $\sigma = \{\sigma(x,\pi), (x,\pi)\in G\times \widehat G\}\in S^{-\infty}$, 
then constructing as above $\psi(u H)\Pi_n  \sigma \Pi_{n}\psi(u H)$ yields a smoothing symbol commuting with $H(\lambda)$.
The closure of their span form a much larger class than the class of spectral multipliers  of $H(\lambda)$. Indeed, spectral multipliers are constant on the vector sets $\mathcal V_n$ which is not the case of  $\psi(u H)\Pi_n  \sigma \Pi_{n}\psi(u H)$. The reader can refer to~\cite[Chapter 4]{FR} for considerations on symbols that are functions of $H(\lambda)$.
\end{remark}

\subsection{The sub-$C^*$-algebra $\mathcal B$ of $H$-commuting symbols}\label{sec:Hdiag}

In order to prove Theorem~\ref{theo:schro}, we shall use symbols in $\mathcal A$ which commute with $H(\lambda)$. The set ${\mathcal B}$ of such symbols satisfies the following properties: 

\begin{lemma}
\label{lem_stateB}
Let $\mathcal B$ be the sub-$C^*$-algebra of $\mathcal A$ consisting of symbols $\sigma\in\mathcal A$ which commute with $H(\lambda)$, 
i.e. $\sigma(x,\lambda) H(\lambda) = H(\lambda) \sigma(x,\lambda)$ for almost every $(x,\lambda)\in G\times \widehat G$.
\begin{enumerate}
\item 	The space $\mathcal B$ contains 
	the symbols of the form $a(x) \sigma(\lambda)$ with $a\in C_c^\infty(G)$ and 
	$\sigma$ smoothing and commuting with $H(\lambda)$, and 
	the algebraic span of these symbols are dense in $\mathcal B$.
\item 	
The states of $\mathcal B$ are in one-to-one correspondence as in Proposition \ref{prop:states} with the measures $\Gamma d\gamma\in \mathcal M_{ov}^+(G\times \widehat G)$ such that $\Gamma = \sum_{n\in \N} \Pi_n \Gamma \Pi_n$.
\end{enumerate}
	\end{lemma}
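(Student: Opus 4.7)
For Part (1), the inclusion is immediate: for $a\in C_c^\infty(G)$ and $\sigma\in S^{-\infty}$ commuting with $H$, the convolution kernel of $a(x)\sigma(\lambda)$ is $(x,z)\mapsto a(x)\kappa_\sigma(z)$, which lies in $C_c^\infty(G;\mathcal S(G))$, so $a(x)\sigma(\lambda)\in\mathcal A_0$, and it evidently commutes with $H(\lambda)$, hence sits in $\mathcal B$. For density, my plan is to introduce a conditional-expectation $\Phi:\mathcal A\to\mathcal B$ defined pointwise, for $\lambda\in\mathfrak z^*\setminus\{0\}$, by $\Phi(\sigma)(x,\lambda)=\sum_{k\in\mathbb N}\Pi_k^{(\lambda)}\sigma(x,\lambda)\Pi_k^{(\lambda)}$, and as the identity on the $\mathfrak v^*$-part of $\widehat G$ (where $H$ reduces to the scalar $|\omega|^2$). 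The orthogonality $\Pi_k\Pi_{k'}=\delta_{k,k'}\Pi_k$ together with $\sum_k\Pi_k=I_{L^2(\mathfrak p_\lambda)}$ yields $\|\Phi(\sigma)(x,\lambda)\|_{\mathcal L(\mathcal H_\lambda)}\leq\|\sigma(x,\lambda)\|_{\mathcal L(\mathcal H_\lambda)}$, so $\Phi$ is bounded of norm $\leq 1$ on $\mathcal A$, and clearly $\Phi|_{\mathcal B}=\mathrm{id}$. Given $\sigma\in\mathcal B$, the $C^*$-norm density noted after Proposition~\ref{prop:C*A} provides $\tau=\sum_j a_j(x)b_j(\lambda)$ with $a_j\in C_c^\infty(G)$, $b_j\in S^{-\infty}$, and $\|\sigma-\tau\|_{\mathcal A}<\varepsilon$; then $\Phi(\tau)=\sum_j a_j(x)\Phi(b_j)(\lambda)$ satisfies $\|\sigma-\Phi(\tau)\|_{\mathcal A}<\varepsilon$, and each $\Phi(b_j)$ commutes with $H$.

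The main obstacle is that $\Phi(b_j)=\sum_k\Pi_k b_j\Pi_k$ need not be smoothing: membership in $S^{-\infty}$ gives decay in $H$ but no decay as $|\lambda|\to 0$. To finish, I would approximate $\Phi(b_j)$ in $\mathcal A$-norm by truncations of the form $g_\delta(\lambda)\sum_{k\leq N}\Pi_k b_j\Pi_k$, where $g_\delta\in C_c^\infty(\mathfrak z^*\setminus\{0\})$ is a cutoff as in Lemma~\ref{lem_g(lambda)} and $N=N(\delta)$ is chosen appropriately. On $\mathrm{supp}\,g_\delta$, the smoothing bound $\|H^M b_j\|_{L^\infty(\widehat G)}<\infty$ combined with $H\Pi_k=|\lambda|(2k+d)\Pi_k$ yields
$$\sup_{|\lambda|\geq\delta}\Big\|\sum_{k>N}\Pi_k b_j(\lambda)\Pi_k\Big\|_{\mathcal L(L^2(\mathfrak p_\lambda))}\leq(\delta(2N+d))^{-M}\|H^M b_j\|_{L^\infty(\widehat G)},$$
so the tail in $k$ vanishes at fixed $\delta$. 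Handling the contribution near $\lambda=0$ and on the $\mathfrak v^*$-stratum requires the continuity of the elements of $\mathcal A$ across the strata of $\widehat G$ together with the SOT approximation of Lemma~\ref{lem_g(lambda)}; this is the main technical step.

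For Part (2), one direction is straightforward: given $\Gamma\,d\gamma\in\mathcal M_{ov}^+(G\times\widehat G)$ with $\Gamma=\sum_k\Pi_k\Gamma\Pi_k$ and $\int\mathrm{Tr}(\Gamma)\,d\gamma=1$, the formula $\ell(\sigma)=\int\mathrm{Tr}(\sigma(x,\lambda)\Gamma(x,\lambda))\,d\gamma$ defines a positive linear functional of norm $1$, hence a state on $\mathcal B$. Conversely, given a state $\ell$ on $\mathcal B$, extend it to $\tilde\ell:=\ell\circ\Phi$ on $\mathcal A$. Since $\Phi$ is unital and positive (because $\Phi(\sigma^*\sigma)=\sum_k(\sigma\Pi_k)^*(\sigma\Pi_k)\geq 0$), $\tilde\ell$ is a state of $\mathcal A$, and Proposition~\ref{prop:states} produces a representative $\tilde\Gamma\,d\tilde\gamma$. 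Setting $\Gamma:=\sum_k\Pi_k\tilde\Gamma\Pi_k$ (well-defined in $\mathcal M_{ov}^+$ by the same orthogonality argument used for $\Phi$) and $\gamma:=\tilde\gamma$, the commutation $\sigma\Pi_k=\Pi_k\sigma$ valid for $\sigma\in\mathcal B$ together with the cyclicity of the trace gives $\ell(\sigma)=\tilde\ell(\sigma)=\int\mathrm{Tr}(\sigma\Gamma)\,d\gamma$. Uniqueness up to equivalence is obtained by applying the uniqueness clause of Proposition~\ref{prop:states} to the state on $\mathcal A$ defined by $\sigma\mapsto\int\mathrm{Tr}(\sigma\Gamma)\,d\gamma$, and exploiting the density from Part~(1).
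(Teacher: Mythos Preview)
Your strategy via the conditional expectation $\Phi(\sigma)=\sum_k \Pi_k\sigma\Pi_k$ is natural, but it hinges on a claim you do not prove and that is not available from the paper's toolkit: that $\Phi$ maps $\mathcal A$ into $\mathcal A$. Recall that $\mathcal A$ is the $C^*$-closure of $\mathcal A_0$, identified with $C_0(G;C^*(G))$; membership is not characterised by a mere $L^\infty(\widehat G)$ bound but by the continuity of the field across $\widehat G$, in particular across the degeneration $\lambda\to 0$. The projections $\Pi_n$ are only \emph{homogeneous} symbols in $\dot S^0$ (Proposition~\ref{prop_PiindotS}); they are not in $C^*(G)$, and there is no reason given here why $\Pi_n b\Pi_n$ or $\sum_n \Pi_n b\Pi_n$ should lie in $C^*(G)$ for $b\in C^*(G)$. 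Your own remark that the near-$\lambda=0$ piece is ``the main technical step'' is exactly the gap: Lemma~\ref{lem_g(lambda)} gives only SOT convergence of the $\lambda$-cutoffs, which is insufficient for an $\mathcal A$-norm approximation, and the tail estimate you write for $k>N$ controls only the region $|\lambda|\ge\delta$. Without $\Phi(\mathcal A)\subset\mathcal A$, the extension $\tilde\ell:=\ell\circ\Phi$ is not defined on $\mathcal A$, and Part~(2) does not go through; the density argument in Part~(1) has the same missing step. (A minor slip: you call $\Phi$ ``unital'', but $\mathcal A$ is non-unital; what you need, and what does hold, is that $\Phi$ is positive and contractive with $\Phi|_{\mathcal B}=\mathrm{id}$.)

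The paper avoids this obstacle entirely. Instead of the full conditional expectation, it uses the \emph{regularised} maps $\sigma\mapsto\sigma^{(n,n,u)}=\psi(uH)\Pi_n\sigma\Pi_n\psi(uH)$ of Corollary~\ref{cor_prop_PiindotS}: the smoothing factor $\psi(uH)\in S^0$ turns $\Pi_n$ into a genuine symbol, so $\sigma^{(n,n,u)}\in\mathcal A$ for every $\sigma\in\mathcal A$ (Part~(5) there), and it lies in $\mathcal B$. One then sets $\ell_{N,u}(\sigma)=\sum_{n=0}^N \ell(\sigma^{(n,n,u)})$, which is a well-defined positive functional on $\mathcal A$, invokes Proposition~\ref{prop:states} to produce $\Gamma_{N,u}\,d\gamma_{N,u}$, and uses the consistency $\ell_{N_1,u_1}(\sigma)=\ell_{N_2,u_2}\big(\sum_{n\le N_1}\sigma^{(n,n,u_3)}\big)$ together with the uniqueness clause to patch these into a single $\Gamma\,d\gamma$ with $\Gamma=\sum_n\Pi_n\Gamma\Pi_n$. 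The role of the cutoff $\psi(uH)$ is precisely to sidestep the question you left open.
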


\begin{proof}[Proof of Lemma \ref{lem_stateB}]
Part (1) is readily checked. Let us prove Part (2).
Let $\ell$ be a state of $\mathcal B$.
For any $N\in \N$ and $u\in (0,1)$, 
Part (5) of Corollary \ref{cor_prop_PiindotS}  implies that 
$$
\ell_{N,u} (\sigma) := \sum_{n=0}^N \ell (\sigma^{(n,n,u)}),
$$
 defines a continuous linear functional  $\ell_{N,u}$ on $\mathcal A$ which is a state or 0.
 We denote by $\Gamma_{N,u}d\gamma_{N,u}\in \mathcal M_{ov}^+(G\times \widehat G)$ the measure corresponding to $\ell_{N,u}$ by Proposition \ref{prop:states}.
We observe that for any $N_1\leq N_2$ and $2u_1 < u_2 < u_3/2$ 
$$
\ell_{u_1,N_1} (\sigma)
=
\ell_{u_2,N_2} ( \sum_{n=0}^{N_1} \sigma^{(n,n,u_3)}).
$$
The uniqueness in Proposition \ref{prop:states}
implies that there exists $\Gamma d\gamma\in \mathcal M_{ov}^+(G\times \widehat G)$ of mass 1, commuting with $H(\lambda)$ and such that $\Gamma_{N,u} d\gamma_{N,u} = \sum_{n=0}^N \psi(u H)\Pi_n  \Gamma \Pi_{n'}\psi(u H)   d\gamma$. 
This implies Part~(2).
\end{proof}

It readily follows from Lemma~\ref{lem_stateB} that the pure states of the $C^*$-algebra $\mathcal B$ are given either by  $|v \rangle \langle v |\,\delta_{x_0}\otimes \delta_{\pi^{\lambda_0}}$ 
for some $v\in \mathcal V_n$ when $\lambda_0\in \mathfrak z^*\setminus\{0\}$ and $x_0\in G$,
or by $\delta_{x_0}\otimes \delta_{\pi^{(0,\omega_0)}}$
for some $\omega_0\in \mathfrak v^*$ and $ x_0\in G$. One can thus describe easily the dual of $\mathcal B$ by identification  with the  subset 
$$
S:=\{(x,\lambda_0, n) \in 
 G\times  \widehat G \times \N \ : \
 n=0 \ \mbox{if} \ \lambda_0\not\in \mathfrak z^*\setminus\{0\}
 \} \ \mbox{of} \ G\times \widehat G\times \N.
 $$

\section{Proof of Theorems~\ref{theo:Egorov} and~\ref{theo:schro}}\label{sec:proofs}

The core of our results are Theorems~\ref{theo:Egorov} and~\ref{theo:schro} from which Theorem~\ref{theorem1} will derive in Section \ref{sec:prooftheorem1}, 
so we focus on these two first statements. 
Their proofs rely on a careful analysis of the commutator $[{\rm Op}_\eps(\sigma),\eps^2 \Delta_G] $ based on Equation~\eqref{prop:commutatorDelta}.
Considering anti-$H$-commuting symbols in Section~\ref{sec:anticom} allows us to prove first Point (i) of both Theorems~\ref{theo:Egorov} and~\ref{theo:schro}. 
We successively prove Points (ii)  of the two theorems in Sections~\ref{proof(ii)} and (iii) of Theorem~\ref{theo:schro} in Section~\ref{proof(iii)} respectively.

\subsection{Spectral decomposition of the time-averaged semi-classical measure}\label{sec:anticom}
Here, we  prove Part (i) of Theorems~\ref{theo:schro} and~\ref{theo:Egorov}, together with equation~(\ref{eq:decomp}).
We take $\sigma\in{\mathcal A}_0$. 
The Schr\"odinger equation \eqref{eq:schro} yields
$$
i\eps^\tau {d\over dt} \left({\rm Op}_\eps(\sigma) \psi^\eps(t) ,\psi^\eps(t) \right)_{L^2(G)} 
=  \left(\left[{\rm Op}_\eps(\sigma) , -\frac{\eps^2}2\Delta_G \right] \psi^\eps(t) ,\psi^\eps(t) \right)_{L^2(G)}.
$$
By use of~\eqref{prop:commutatorDelta}, we obtain 
\begin{eqnarray*}
{1\over 2}  \left({\rm Op}_\eps\left([\sigma,H(\lambda) \right] \psi^\eps(t) ,\psi^\eps(t) \right)_{L^2(G)}
&=&  
\frac 12\left(\left[{\rm Op}_\eps(\sigma) , -\eps^2\Delta_G \right] \psi^\eps(t) ,\psi^\eps(t) \right)_{L^2(G)} + O(\eps) +O(\eps)\\
 &=&\frac i2 \eps^\tau {d\over dt} \left({\rm Op}_\eps(\sigma) \psi^\eps(t) ,\psi^\eps(t) \right)_{L^2(G)}+O(\eps).
 \end{eqnarray*}
Therefore, 
\begin{equation}
\label{eq:commutator}
\int_{\R} \theta(t)   \left({\rm Op}_\eps\left([\sigma,H(\lambda) ]\right) \psi^\eps(t) ,\psi^\eps(t) \right)_{L^2(G)} dt = O(\eps^\tau)+O(\eps).
\end{equation}

\subsubsection{Proof of Theorem~\ref{theo:schro} (i)}
Firstly, 
taking the limit as $\eps\to0$ in \eqref{eq:commutator}  for a subsequence defining a semi-classical measure $\Gamma_td\gamma_t$, it gives 
 for all $\sigma\in {\mathcal A}_0$ and for all $\theta\in{\mathcal C}_c^\infty (\R)$, 
$$
\int_{\R}\theta(t)\int_{G\times \widehat G} {\rm Tr}  \left( [\sigma(x,\lambda ),H(\lambda)]\, \Gamma_t(x,\lambda)\right)d\gamma_t(x,\lambda)dt=0.
$$
We apply this to any symbol $\sigma^{(n,n',u)}\in \mathcal A_0$ given by \eqref{eq_sigmann'u}, see Corollary \ref{cor_prop_PiindotS}.
By Corollary \ref{cor_dualC0},
this implies that 
the element in $\mathcal M_{ov}(\R\times G\times \widehat G)$ given by  
$$
[H(\lambda) \, , \, 
\Pi_{n'}\psi(u H(\lambda))
\Gamma_t(x,\lambda)
\psi(u H(\lambda))\Pi_n] \,
 d\gamma_t dt 
$$ 
is zero. 
For any pair of integers with $n\not=n'$, 
taking $u\to 0$ implies that
$\Pi_{n'} \Gamma_t \Pi_{n}=0$
for almost every $(x,\lambda)\in G\times \widehat G$, this shows Point~(i) of Theorem~\ref{theo:schro}.
As $\Gamma_t$ is a positive compact operator, it  admits the spectral decomposition~(\ref{eq:decomp}).

\subsubsection{Proof of Theorem~\ref{theo:Egorov}  (i)}
Secondly,  we can apply \eqref{eq:commutator} for the symbols 
$$
\tilde \sigma^{(g, n,n',u)}(x,\lambda):= 
g(\lambda) (2|\lambda| (n-n'))^{-1} \sigma^{(n,n',u)}
$$
where $\sigma^{(n,n',u)}$ is as above and $g\in \mathcal S(\mathfrak z^*)$ supported away from 0;
by Lemma \ref{lem_g(lambda)}, 
this  symbol is indeed in $\mathcal A_0$.
We obtain that for $n\not=n'$ 
$$
\int_{\R} \theta(t)   \left({\rm Op}_\eps\left(g \sigma^{(n,n',u)}\right) \psi^\eps(t) ,\psi^\eps(t) \right)_{L^2(G)} dt = O(\eps^{\min (1,\tau)}).
$$
This implies Point~(i) of Theorem~\ref{theo:Egorov} 
for  $\sigma\in \mathcal A_H$ by taking $g\in C_c^\infty(\mathfrak z^*\setminus\{0\})$ identically 1 on the $\lambda$-support of $\sigma$, and $u$ small enough so that 
$\sigma^{(n,n')} = \sigma^{(n,n',u)}$ where 
$\sigma =\sum_{n,n'} \sigma^{(n,n')}$ with the notation of Corollary \ref{cor_prop_PiindotS} Part (2).

\subsubsection{A more precise computation}
In order to prove the rest of Theorems~\ref{theo:Egorov}  and~\ref{theo:schro}, we write down more precisely  the equalities we have obtained above using~\eqref{prop:commutatorDelta} 
\begin{align}
\label{eq_epstauOp}
&i\eps^\tau {d\over dt} \left({\rm Op}_\eps(\sigma) \psi^\eps(t) ,\psi^\eps(t) \right)_{L^2(G)} 
=
\frac 12 \left({\rm Op}_\eps\left[ \sigma, H(\lambda) \right] \psi^\eps(t) ,\psi^\eps(t) \right)_{L^2(G)}
\\&\qquad\qquad-\eps 
\left({\rm Op}_\eps\left(V\cdot \pi^\lambda(V) \sigma\right)
 \psi^\eps(t) ,\psi^\eps(t) \right)_{L^2(G)}
 -\frac {\eps^2}2
\left({\rm Op}_\eps \left(\Delta_G \sigma\right) \psi^\eps(t) ,\psi^\eps(t) \right)_{L^2(G)}.	
\nonumber
\end{align}
It is actually  convenient to use the notation 
$\ell_\eps(\theta,\sigma)$  introduced in \eqref{def:leps}.
By Theorem~\ref{theo:mesures}, we know that 
up to extraction of a subsequence $\eps_k$, 
$
\ell_\eps(\theta,\sigma)
$ has a limit  that we denote by 
$\ell_\infty(\theta,\sigma)$.
With these notations, equation~(\ref{eq_epstauOp}) writes 
 \begin{equation}
 \label{eq_epstauIeps} 
 	-i\eps^\tau \ell_\eps (\theta',\sigma) 
 	=\frac 12 \ell_\eps \left(\theta,\left[\sigma, H(\lambda)\right]\right) 
-\eps  \ell_\eps \left(\theta,V\cdot \pi^\lambda(V)\sigma\right) 
-\frac{\eps^2}2 \ell_\eps \left(\theta,\Delta_G \sigma\right) . 
 \end{equation}

\subsection{Proof of (ii) Theorems~\ref{theo:schro} and~\ref{theo:Egorov}}\label{proof(ii)}

In this paragraph, we prove Parts (ii) in Theorems~\ref{theo:schro} and~\ref{theo:Egorov}.
We consider symbols supported away from $\lambda=0$
and commuting with $H(\lambda)$. We are going to use some properties that
 are summarised in the following technical lemma, the proof of which is in Appendix~B. 

\begin{lemma}
\label{lem_sigma_comH} 
If $\sigma_0\in {\mathcal A}_0$ commutes with $H$
and if  $g\in {\mathcal S}({\mathfrak z}^*)$ is supported away from 0, 
then the symbol $\sigma\in {\mathcal A}_0$  given via 
$\sigma(x,\lambda) = g(\lambda) \sigma_0(x,\lambda)$ 
satisfies the following properties:
\begin{enumerate}
\item
	The symbol $\sigma_1$ given via 
$$
\sigma_1(x,\lambda) = 
\frac {-1}{2i|\lambda|}
\sum_{j=1}^d \left(P_j \pi^\lambda (Q_j) - Q_j \pi^\lambda(P_j)\right)\sigma(x,\lambda) 
$$
is in ${\mathcal A_0}$.
\item
For any  $(x,\lambda)\in G\times \widehat G$,
$$
\left[\sigma_1(x,\lambda),H(\lambda)\right]
= V\cdot \pi^\lambda(V) \sigma(x,\lambda).
$$ 
\item 
For any $n\in \N$, $(x,\lambda)\in G\times \widehat G$,
$$
\Pi_n (V\cdot \pi^\lambda(V) \sigma_1(x,\lambda)) \Pi_n
=\frac 14 
\left((2n+d)	i|\lambda|^{-1} {\mathcal Z}^{(\lambda)} 
-  \Delta_G\right)\Pi_n \sigma(x,\lambda)	\Pi_n .
$$
\end{enumerate}
\end{lemma}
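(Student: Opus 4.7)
The approach is to establish the three parts in sequence, each leveraging the harmonic-oscillator algebra generated by $\pi^\lambda(P_j)$ and $\pi^\lambda(Q_j)$ on $L^2(\mathfrak p_\lambda)$ together with the equally-spaced discrete spectrum of $H(\lambda)$.

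For Part (1), I would verify that each term of $\sigma_1$ lies in $\mathcal A_0$ by tracking its convolution kernel. Since $g$ is supported away from the origin of $\mathfrak z^*$, the scalar factor $g(\lambda)/|\lambda|$ is smooth and compactly supported on $\mathfrak z^*$, so its inverse central Fourier transform is Schwartz on $\mathfrak z$, and convolution by it preserves $\mathcal A_0$. The operators $\pi^\lambda(P_j)$ and $\pi^\lambda(Q_j)$ are the group-Fourier images of the left-invariant vector fields $P_j$ and $Q_j$, so composing them with $\sigma$ corresponds to differentiating the convolution kernel $\kappa_x$ along $P_j$ or $Q_j$, preserving the Schwartz class in $z$ and the smooth compact support in $x$.

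For Part (2), I would compute $[\sigma_1, H(\lambda)]$ using the hypothesis $[\sigma, H(\lambda)]=0$ and the elementary identities
\[
[H(\lambda),\pi^\lambda(P_j)] = 2i|\lambda|\pi^\lambda(Q_j), \qquad [H(\lambda),\pi^\lambda(Q_j)] = -2i|\lambda|\pi^\lambda(P_j),
\]
which follow from $H(\lambda) = -\sum_k(\pi^\lambda(P_k)^2 + \pi^\lambda(Q_k)^2)$ and the canonical commutation $[\pi^\lambda(P_j), \pi^\lambda(Q_k)] = i|\lambda|\delta_{jk}$ (itself a consequence of \eqref{eq_Zlambda} and $\pi^\lambda(\mathcal Z^{(\lambda)}) = i|\lambda|^2$). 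Since $\sigma$ and its $x$-derivatives commute with $H(\lambda)$, the whole commutator collapses to the $\pi^\lambda$-commutators above acting on $\sigma$, and the prefactor $-(2i|\lambda|)^{-1}$ is calibrated precisely to produce $V\cdot\pi^\lambda(V)\sigma$.

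For Part (3), I would expand $V\cdot\pi^\lambda(V)\sigma_1$, which couples Fourier-side products $\pi^\lambda(V_k)\pi^\lambda(V_j)$ with $x$-side second-order derivatives $V_k V_j\sigma$. Symmetrising the $x$-derivatives by $[P_j,Q_j] = |\lambda|^{-1}\mathcal Z^{(\lambda)}$ splits them into a symmetric piece reassembling to $\Delta_G\sigma$ and an antisymmetric piece producing a first-order central derivative $\mathcal Z^{(\lambda)}\sigma$. Sandwiching by $\Pi_n$ annihilates the Fourier-side ladder-shifting combinations, and the surviving level-preserving terms use $H(\lambda)\Pi_n = |\lambda|(2n+d)\Pi_n$ together with $[\pi^\lambda(P_j),\pi^\lambda(Q_j)]=i|\lambda|$ to produce the coefficient $(2n+d)$; the numerical constants collapse to $\tfrac14$. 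The main obstacle will be Part (3): one must simultaneously manage Fourier-side operator products (with their ladder structure and commutators) and $x$-side second-order derivatives (with the central-direction Heisenberg brackets), and package the result into the clean $(\mathcal Z^{(\lambda)}, \Delta_G)$-form demanded by the statement; for that reason the paper defers the detailed computation to an appendix.
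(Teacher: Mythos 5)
Your outline of Parts (2) and (3) is essentially the paper's argument. For Part (2) you derive the commutators $[H(\lambda),\pi^\lambda(P_j)]$ and $[H(\lambda),\pi^\lambda(Q_j)]$ directly from the canonical commutation relations, whereas the paper (Lemma~\ref{lem_comDeltaPQ}) obtains $[\Delta_G,P_j]$ and $[\Delta_G,Q_j]$ on the group side and pushes them through $\pi^\lambda$; the two routes are interchangeable. For Part (3) the paper isolates the algebra into Lemma~\ref{lem_compT} using the ladder operators $R_j=\frac12(P_j-iQ_j)$ and $\bar R_j$; your description of sandwiching by $\Pi_n$ to kill the ladder-shifting Fourier-side terms and extract the $(2n+d)$ coefficient captures the same mechanism, and your instinct that this is the computational bottleneck is right.

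Part (1), however, has a genuine gap. You treat $P_j$ and $Q_j$ as if they were fixed left-invariant vector fields, arguing that composition with $\pi^\lambda(P_j)$, $\pi^\lambda(Q_j)$ (and implicitly, the $x$-derivatives $P_j$, $Q_j$) preserves the kernel class. But for an $H$-type group with $\dim\mathfrak z=p>1$, the orthonormal basis $(P_1,\ldots,P_d,Q_1,\ldots,Q_d)$ of $\mathfrak v$ is $\lambda$-dependent: it is chosen so that $B(\lambda)$ takes the normal form $|\lambda|J$, and the splitting $\mathfrak v=\mathfrak p_\lambda\oplus\mathfrak q_\lambda$ rotates with the direction $\lambda/|\lambda|$. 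Hence $\sigma_1(x,\lambda)$ involves both $\lambda$-dependent differential operators acting on $x$ and $\lambda$-dependent operator compositions, and it is not immediate that the resulting convolution kernel $(x,z)\mapsto\kappa_x(z)$ lies in $\mathcal C_c^\infty(G;\mathcal S(G))$. The paper's proof of Part (1) does precisely the work your sketch skips: it rewrites
$$
\sum_{j=1}^d\left(P_j\,\pi^\lambda(Q_j)-Q_j\,\pi^\lambda(P_j)\right)
=|\lambda|^{-1}\sum_{j,k=1}^{2d}B(\lambda)_{j,k}\,V_j\,\pi^\lambda(V_k),
$$
which moves all of the $\lambda$-dependence into the scalar entries $B(\lambda)_{j,k}$, linear in $\lambda$, while the $V_j$ are now $\lambda$-independent. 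After the factor $-1/(2i|\lambda|^2)$ and the cutoff $g(\lambda)$, the coefficients become Schwartz functions on $\mathfrak z^*$ supported away from~$0$, so Lemma~\ref{lem_g(lambda)} applies; composition with $\pi^\lambda(V_k)$ and differentiation by $V_j$ then manifestly preserve $\mathcal A_0$. Without this change of basis your Part (1) would essentially only go through when $p=1$ (the Heisenberg case), where the basis can be taken $\lambda$-independent. A minor additional slip: $g(\lambda)/|\lambda|$ is Schwartz but not compactly supported, since the hypothesis only asks that $g\in\mathcal S(\mathfrak z^*)$ vanish near~$0$.
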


\subsubsection{The case $\tau\in (0,2)$ - Proof of  Part (ii) (1) of Theorems~\ref{theo:schro} and~\ref{theo:Egorov}}
\label{subsubsec_tau02}
Continuing with the setting of Lemma \ref{lem_sigma_comH},
its parts (1) and (2) together with  equation~\eqref{eq_epstauIeps}  applied to the symbol $\sigma_1$ 
yield the development of 
the term:
\begin{align*}
 \ell_\eps \left(\theta,V\cdot \pi^\lambda(V)\sigma\right)
 &=
 \ell_\eps \left(\theta,[\sigma_1,H(\lambda)]\right)
 \\
 \label{5.7}
 &=
 -2i\eps^\tau \ell_\eps (\theta', \sigma_1)
 +2\eps \ell_\eps (\theta, V\cdot \pi^\lambda(V) \sigma_1) +\eps^2 \ell_\eps (\theta, \Delta_G \sigma_1).
\end{align*}
 Plugging this into  \eqref{eq_epstauIeps} 
 shows that we have 
 \begin{eqnarray}\nonumber
	\ell_\eps (\theta',\sigma)
	&=&
	i\eps^{-\tau}\left(-\eps  \ell_\eps \left(\theta,V\cdot \pi^\lambda(V)\sigma\right) 
-\frac{\eps^2}2 \ell_\eps \left(\theta,\Delta_G \sigma\right)\right)\\
\label{keyline}
&= & 2\eps \ell_\eps (\theta', \sigma_1)
 -2i\eps^{2-\tau} \ell_\eps (\theta, V\cdot \pi^\lambda(V) \sigma_1) -\frac {i}2 {\eps^{2-\tau}} \ell_\eps \left(\theta,\Delta_G \sigma\right)
  -i\eps^{3-\tau} \ell_\eps (\theta, \Delta_G \sigma_1)
  \\
\nonumber
& =& O(\eps)+  O(\eps^{2-\tau}).
\end{eqnarray}
Proceeding as in Section \ref{sec:anticom},
this implies the case $\tau\in (0,2)$, i.e. (ii) Part (1) of Theorems~\ref{theo:schro} and~\ref{theo:Egorov}.

\subsubsection{A more precise computation}
\label{subsubsec_moreprecisecomp}
Before focusing on the cases $\tau\geq 2$, 
let us make more explicit the computation in Section \ref{subsubsec_tau02}  (which is valid for any $\tau$) by setting for any $\sigma$ as in Lemma \ref{lem_sigma_comH}:
\begin{equation}
	j_\eps(\theta,\sigma)
	:=
2\ell_\eps (\theta, V\cdot \pi^\lambda(V) \sigma_1)
+\frac 12 \ell_\eps (\theta, \Delta_G \sigma). \label{def:Jeps}
\end{equation}
Rewriting (\ref{keyline}), we obtain 
\begin{equation}\label{def:Jeps'}
j_\eps(\theta,\sigma)=i\eps^{\tau-2}\ell_\eps (\theta',\sigma)
 -2i \eps^{\tau-1} \ell_\eps (\theta', \sigma_1)
+\eps \ell_\eps (\theta, \Delta_G \sigma_1).
\end{equation}
Theorem~\ref{theo:Egorov} (i) and  Lemma~\ref{lem_sigma_comH} (3) together with \eqref{def:Jeps} give for any $\sigma \in \mathcal A_H$, 
\begin{equation}\label{def:Jepsbis}
j_\eps(\theta,\sigma)= {i} 
\sum_{n\in\N} \ell_\eps\left(\theta,\frac{2n+d}{2|\lambda|} {\mathcal Z}^{(\lambda)}\Pi_n\sigma\Pi_n\right)+O(\eps^{\min(1,\tau)}),
\end{equation}
as the terms in $\Delta_G$ cancel each other.

\medskip 

We  set for any $\sigma$ as in Lemma \ref{lem_sigma_comH}:
$$
j_\infty(\theta,\sigma):=
\lim_{\eps_k\to 0} j_{\eps_k}(\theta,\sigma).
$$
Theorem \ref{theo:mesures}
and \eqref{def:Jeps'} imply
\begin{equation}
\label{def:Jepstau>2}
j_\infty(\theta,\sigma) = 0
\quad \mbox{when}\quad \tau>2,
\end{equation}
while 
passing to the limit by Theorem \ref{theo:mesures}, 
 we have for any $\tau >0$:
\begin{equation}\label{Jinfini=}
j_\infty(\theta,\sigma)
=\int_{\R} \theta(t) \int_{G\times \widehat G}
 {i } \sum_{n=0}^\infty \frac{2n+d}{2|\lambda|}\,
	{\rm Tr}  
	 \left(\left( {\mathcal Z}^{(\lambda)} 
\Pi_n \sigma(x,\lambda)\Pi_n \right) \Gamma_{n,t}(x,\lambda)\right)d\gamma_t(x,\lambda) dt.
\end{equation}

\subsubsection{Proof of Parts (2) and (3) of  Theorem \ref{theo:Egorov} (ii)}
We now consider $\sigma\in \mathcal A_H^{(d)}$.
We may assume that  $\sigma=\Pi_n \sigma \Pi_n$ for some fixed $n\in \N$.

\medskip 

In the case $\tau=2$, 
the computations in Section \ref{subsubsec_moreprecisecomp}  give (using first equation~(\ref{def:Jeps'}) and then~(\ref{def:Jepsbis}))
\begin{align*}
&\frac{d}{ds} \ell^{\eps} \left( \theta(\cdot+s), \sigma\circ
\Psi^{-\frac {2n+d}{2|\lambda|} s}
 \right)
 \\&\quad 
 = 
\ell^{\eps} \left( \theta'(\cdot+s), \sigma\circ \Psi^{-\frac {2n+d}{2|\lambda|} s} \right)
+
\ell^{\eps} \left( \theta(\cdot+s), -\frac {2n+d} {2|\lambda|}\mathcal Z^{(\lambda)} \sigma\circ \Psi^{-\frac {2n+d}{2|\lambda|} s}\right)	
\\
&\quad 
= - i j_\eps (\theta(\cdot+s), \sigma\circ \Psi^{-\frac {2n+d}{2|\lambda|} s}) 
\ + \
\ell_{\eps} \left( \theta(\cdot+s), -\frac {2n+d}{2|\lambda|}\mathcal Z^{(\lambda)} \sigma\circ \Psi^{-\frac {2n+d}{2|\lambda|} s}\right)
\ +\ O(\eps)
\\
&\quad 
= O(\eps).
\end{align*}
An integration over $s$  shows Part (1).

\medskip 

Part (3) for $\tau>2$ is proved in a similar manner: using again~(\ref{def:Jeps'}) and~(\ref{def:Jepsbis})
\begin{align}\label{5.12}
\frac{d}{ds}\ell_{\eps} \left( \theta, \sigma\circ \Psi^{-\frac {2n+d}{2|\lambda|} s} \right)
&=
\ell_{\eps} \left( \theta, -\frac {2n+d}{2|\lambda|}\mathcal Z^{(\lambda)} \sigma\circ \Psi^{-\frac {2n+d}{2|\lambda|} s} \right)
\\&= 
{{i}} j_\eps  \left( \theta,  \sigma\circ \Psi^{-\frac {2n+d}{2|\lambda|} s} \right)+O(\eps )
=O(\eps^{{\rm min}(1,\tau-2)}).
\nonumber
\end{align}
This concludes the proof of Theorem \ref{theo:Egorov}.

\subsubsection{Proof  of Part (2) of Theorem \ref{theo:schro}  (ii)} 
Here $\tau=2$. 
For any $\sigma\in \mathcal A_H$ satisfying   $\sigma=\Pi_n \sigma \Pi_n$ for some fixed $n\in \N$, 
the considerations  in Section \ref{subsubsec_moreprecisecomp} (see equations~(\ref{keyline}), (\ref{def:Jeps}) and~(\ref{Jinfini=}))
 give
$$
\ell_\infty(\theta',\sigma) = 
\ell_\infty\left( \theta, {{\frac {2n+d}{ 2 |\lambda|}}} \mathcal Z^{(\lambda)} \sigma\right) \ \mbox{if} \ \tau =2.
$$
The weak density of $\mathcal A_H$ in $\mathcal A_0$ (see Corollary \ref{cor_prop_PiindotS})
implies that 
 Part (2) of Theorem \ref{theo:schro} (ii).
 
\subsubsection{Proof of  Part (3) Theorem \ref{theo:schro} (ii)}
Here $\tau>2$.
For any $\sigma\in \mathcal A_H$ satisfying   $\sigma=\Pi_n \sigma \Pi_n$ for some fixed $n\in\N$,
passing to the limit in~~(\ref{5.12}) give (in view of~(\ref{def:Jepstau>2}))
$$
\ell_\infty( \theta, |\lambda|^{-1} \mathcal Z^{(\lambda)} \sigma) = 0.
$$
The weak density of $\mathcal A_H$ in $\mathcal A_0$ (see Corollary \ref{cor_prop_PiindotS})
implies that  $\Gamma_{n,t}(x,\lambda)d\gamma_t(x,\lambda)$ is invariant under ${\mathcal Z}^{(\lambda)}$ away from $\lambda=0$.
As this is true for every $n\in \N$, 
 the measure ${\bf 1}_{\lambda\not=0}{\rm Tr} (\Gamma_t) d\gamma_t = {\bf 1}_{\lambda\not=0} \sum_{n\in \N}{\rm Tr} (\Gamma_{n,t}) d\gamma_t $
is invariant under the flow $\Psi^s$ defined in \eqref{eq_Psis}.
This concludes the proof of Theorem~\ref{theo:schro} (ii) in view of the discussion at  the end of Section~\ref{sec:enoncetheo}.

\subsection{Proof of Theorem~\ref{theo:schro} (iii)}\label{proof(iii)}
 
 We recall that the measure $\Gamma_t d\gamma_t$ is scalar above $\{\lambda=0\}$, and we can define the measure $d\varsigma_t(x,\omega):= \Gamma_t(x,(0,\omega))d\gamma_t(x,(0,\omega)){\bf 1}_{\lambda=0}$. In order to compute this measure, we observe the following fact. 
 
 \begin{lemma}
 \label{lem_sigma_rest}
 The map $\sigma\mapsto \sigma|_{G\times \{\lambda=0\}}$ is a $C^*$ algebra morphism from $\mathcal A$ to $C_0(G\times \mathfrak v^*)$.
 This map as well as its restriction to the 
 sub-$C^*$-algebra $\mathcal B$ of $H$-commuting symbols
 (see Section \ref{sec:Hdiag})
 are surjective.
  \end{lemma}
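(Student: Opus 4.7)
The plan is to treat the restriction $r:\sigma\mapsto \sigma|_{G\times\{\lambda=0\}}$ in three stages: verify that it is a well-defined $C^*$-morphism, then establish each surjectivity claim using that the image of a $C^*$-morphism is automatically closed, so each claim reduces to a density statement. For $\sigma\in\mathcal A_0$ with kernel $\kappa_x\in\mathcal S(G)$, one has
\[
r(\sigma)(x,\omega)=\sigma(x,(0,\omega))=\int_{\mathfrak v\times\mathfrak z}\kappa_x(\mathrm{Exp}(V+Z))\,e^{-i\omega(V)}\,dV\,dZ,
\]
which is scalar because $\mathcal H_{(0,\omega)}=\C$, smooth in $(x,\omega)$, and compactly supported in $x$, hence lies in $C_0(G\times\mathfrak v^*)$. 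Since operator composition and adjunction reduce to complex multiplication and conjugation in dimension one, $r$ is multiplicative and $*$-preserving on $\mathcal A_0$; together with the bound $\|r(\sigma)\|_\infty\leq \|\sigma\|_{\mathcal A}$ from \eqref{eq_normA}, this lets $r$ extend by density to a $C^*$-morphism $\mathcal A\to C_0(G\times\mathfrak v^*)$.

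Surjectivity of $r$ on $\mathcal A$ is by density: take product kernels $\kappa_x(\mathrm{Exp}(V+Z))=a(x)\phi(V)\chi(Z)$ with $a\in C_c^\infty(G)$, $\phi\in\mathcal S(\mathfrak v)$, and $\chi\in\mathcal S(\mathfrak z)$ normalised so that $\int_{\mathfrak z}\chi=1$; then $r(\sigma)(x,\omega)=a(x)\widehat\phi(\omega)$, and the algebraic span of such products is dense in $C_0(G\times\mathfrak v^*)$ by the Stone--Weierstrass theorem. Closedness of the image of a $C^*$-morphism then yields $r(\mathcal A)=C_0(G\times\mathfrak v^*)$.

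For $r|_{\mathcal B}$ I would apply the locally compact Stone--Weierstrass theorem to the closed $*$-subalgebra $r(\mathcal B)\subseteq C_0(G\times\mathfrak v^*)$. Non-vanishing at any $(x_0,\omega_0)$ follows from $\sigma(x,\lambda)=a(x)f(H(\lambda))\in\mathcal B$ since $H((0,\omega))=|\omega|^2$. The main obstacle is separating pairs $(x_0,\omega_0)\neq(x_0,\omega_1)$ with $|\omega_0|=|\omega_1|$, where purely radial functions of $\omega$ are insufficient. My strategy is to take the product-kernel symbol $\sigma_0\in\mathcal A_0$ from the previous paragraph and replace it by its $H$-diagonal projection
\[
\tilde\sigma(x,\lambda):=\sum_{n\in\N}\Pi_n^{(\lambda)}\,\sigma_0(x,\lambda)\,\Pi_n^{(\lambda)}\quad \text{on}\quad \mathfrak z^*\setminus\{0\},
\]
extended at $\lambda=0$ by $\sigma_0|_{\lambda=0}$ itself (no projection needed since the representation is then scalar), so that $\tilde\sigma$ commutes with $H$ on $\lambda\neq 0$ and $r(\tilde\sigma)=r(\sigma_0)=a(x)\widehat\phi(\omega)$ keeps the full non-radial factor. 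The key technical hurdle, which I expect to be the hardest step, is showing that $\tilde\sigma$ genuinely belongs to $\mathcal A$, i.e.\ is continuous across $\lambda=0$ in the topology on $\widehat G$; this should follow by approximating $\tilde\sigma$ on $\mathfrak z^*\setminus\{0\}$ by the finite cutoff sums $\sum_{n\leq N}\psi(uH)\Pi_n^{(\lambda)}\sigma_0\Pi_n^{(\lambda)}\psi(uH)\in\mathcal A_H$ supplied by Corollary \ref{cor_prop_PiindotS}, and passing to the limits $N\to\infty$, $u\to 0$, using the Schwartz decay of the kernel of $\sigma_0$ together with the symbolic control of the projectors $\Pi_n^{(\lambda)}$ provided by Proposition \ref{prop_PiindotS}.
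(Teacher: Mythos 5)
Your verification that $r$ is a $C^*$-morphism and your surjectivity argument for $\mathcal A$ (product kernels $a(x)\phi(V)\chi(Z)$ with $\int\chi=1$, Stone--Weierstrass, closedness of the range) are correct and essentially the same as the paper's explicit right-inverse construction.

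The surjectivity argument for $\mathcal B$, however, has a genuine gap. You correctly identify the hard step --- showing $\tilde\sigma\in\mathcal A$ --- but the strategy you propose does not close it. The approximants $\sum_{n\le N}\psi(uH)\Pi_n^{(\lambda)}\sigma_0\Pi_n^{(\lambda)}\psi(uH)$ furnished by Corollary~\ref{cor_prop_PiindotS} converge to $\tilde\sigma$ only in the strong operator topology (this is what Proposition~\ref{prop_PiindotS} gives), whereas membership in $\mathcal A$ requires convergence in the $C^*$-norm $\|\cdot\|_{\mathcal A}$ of~\eqref{eq_normA}: $\mathcal A$ is norm-closed, not SOT-closed, and SOT limits of elements of $\mathcal A_H$ may lie well outside $\mathcal A$ (generically in the enveloping von Neumann algebra). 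Moreover there is no reason to expect that the $H$-diagonal truncation of a Fourier multiplier in $C^*(G)$ is again in $C^*(G)$: the diagonal projection is the average of $\sigma_0$ under conjugation by the one-parameter unitary group generated by $|\lambda|^{-1}H(\lambda)$, whose induced rotation on $\mathfrak v$ genuinely depends on $\lambda$ (through $|\lambda|^{-1}B(\lambda)$), so the averaged convolution kernel is not a single function on $G$. Fixing this would require a substantive new argument, not just a passage to the limit.

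The paper sidesteps the construction entirely with a duality argument. Using the identification of $\mathcal A^*$ (and of the states of $\mathcal B$, Lemma~\ref{lem_stateB}) with operator-valued measures $\Gamma\,d\gamma\in\mathcal M_{ov}(G\times\widehat G)$ (Proposition~\ref{prop:states}, Corollary~\ref{cor1_prop:states}), one observes that any measure on $G\times\mathfrak v^*$ lifts to an element of $\mathcal M_{ov}(G\times\widehat G)$ supported on $\{\lambda=0\}$; since $\Gamma$ is scalar there, such a measure automatically commutes with $H$ and hence also defines a functional on $\mathcal B$. Consequently the spectrum of the closed sub-$C^*$-algebra $\Theta(\mathcal B)\subset C_0(G\times\mathfrak v^*)$ is all of $G\times\mathfrak v^*$, forcing equality. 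This buys you surjectivity for $\mathcal B$ without ever having to exhibit a single nontrivial $H$-diagonal symbol that is continuous across $\lambda=0$, which is precisely what your approach cannot supply.
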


We observe that the map $\Theta: \sigma\mapsto \sigma|_{G\times \{\lambda=0\}}$ maps ${\mathcal A}_0$ on  $\mathcal C_c^\infty (G;\mathcal S(\mathfrak v^*))$
in the following way $$
\Theta(\sigma)(x,\omega):=
\sigma(x,(0,\omega)) = \int_G  \kappa_x(v,z) e^{- i \omega\cdot v} dv dz,
$$	
  where we write $\sigma\in \mathcal A_0$  
as $\sigma(x,\lambda) = \widehat \kappa_x (\lambda)$ 
with  the map $x\mapsto \kappa_x $ in $\mathcal C_c^\infty (G;\mathcal S(G))$.
In fact, $\Theta(\mathcal A_0) = C_c^\infty (\mathcal S(\mathfrak v^*))$ and 
we can easily construct a right inverse via
$$
(x\mapsto \phi_x(\omega)) \  \longmapsto \ \mathcal F_G (\mathcal F^{-1}_{\mathfrak v^*} \phi_x  \ \mathcal F^{-1}_{\mathfrak z^*} \chi (z)),
$$
where $\chi\in \mathcal C_c^\infty(\mathfrak z^*)$ with $\chi(0) =1$.
From this, we easily check that $\Theta$ extends to  a $C^*$-algebra morphism from~$\mathcal A$ onto $\mathcal C_0(G\times \mathfrak v^*)$.  
However, we now give below another argument for the surjectivity of $\Theta$ which has the advantage that it also  holds for its restriction to $\mathcal B$.

\begin{proof}[Proof of Lemma \ref{lem_sigma_rest}]
Using the notation just above, 
 $\Theta:\sigma\mapsto \sigma|_{G\times \{\lambda=0\}}$ maps ${\mathcal A}_0$ on  $\mathcal C_c^\infty (G;\mathcal S(\mathfrak v^*))$ and extends to 
 a $C^*$-algebra morphism from~$\mathcal A$ to $\mathcal C_0(G\times \mathfrak v^*)$.  
 The set $\Theta({\mathcal A})$ is a sub-${\mathcal C}^*$-algebra of $\mathcal C_0(G\times \mathfrak v^*)$, their spectrum are included accordingly with equality if and only if they are equal. 
Any state of $\mathcal C_0(G\times \mathfrak v^*) $
is given by a measure in $\mathcal M^+(G\times \mathfrak v^*)$
which may be viewed as an operator valued measure in $\mathcal M^+_{ov}(G\times \widehat G)$ vanishing on $G\times \{\lambda\not=0\}$.
This shows that the spectrum of the commutative algebra $\Theta(\mathcal A)$ is $G\times \mathfrak v^*$,
so $\Theta(\mathcal A) = C_0(G\times \mathfrak v^*)$.
The same argument holds for $\mathcal B$.
\end{proof}

For any $\theta\in \mathcal C_c^\infty (\R)$ and any $\sigma\in \mathcal A_0\cap \mathcal B$,
we decompose  the integrals in $\ell_\infty(\theta', \sigma)$
and $\ell_\infty (\theta, V\cdot \pi^\lambda(V)\sigma)$ over $G\times \widehat G$ as
the sum of two integrals, one over $G\times \{\lambda\not=0\}$ and  one over $G\times \{\lambda =0\}\sim G\times {\mathfrak v}^*$.
The two integrals over $G\times \{\lambda\not=0\}$ are zero by Part (ii)  and by Lemma~\ref{lem_sigma_comH} respectively.
Then, using this when passing to the limit in~\eqref{eq_epstauIeps}, we have the following alternatives. 
\begin{enumerate}
\item If $\tau\in(0,1)$,  the relation
$\ell_\infty(\theta', \sigma) =0$ gives 
$$
\forall \sigma \in \mathcal A_0\cap \mathcal B \qquad
\int_{\R} \theta'(t)
\int_{G\times {\mathfrak v}^*}  \sigma(x,(0,\omega)) d\varsigma_t(x,\omega) dt =0,
$$
whence $\partial_t \varsigma_t=0$ in the sense of distributions by Lemma \ref{lem_sigma_rest}.
\item If $\tau=1$,  using~(\ref{Fourieromega}), 
we obtain
$\ell_\infty(\theta', \sigma) = \ell_\infty (\theta, V\cdot \pi^\lambda(V)\sigma)$, whence 
$$\int_{\R} \theta'(t)
\int_{G\times {\mathfrak v}^*}  \sigma(x,(0,\omega)) d\varsigma_t(x,\omega) dt=
\int_{\R} \theta(t)
\int_{G\times {\mathfrak v}^*}  
\omega\cdot V\sigma(x,(0,\omega)) d\varsigma_t(x,\omega) dt,$$
from which we deduce in the sense of distributions by Lemma \ref{lem_sigma_rest}
$$\partial_t\varsigma_t=   \omega\cdot V\varsigma_t.$$
\item If $\tau>1$, using again~(\ref{Fourieromega}) and Lemma \ref{lem_sigma_rest} yields $\ell_\infty (\theta, V\cdot \pi^\lambda(V)\sigma)=0$
and this implies that the measure $\varsigma_t(x,\omega)$ is invariant under the flow $\Xi^s$, $s\in \R$,
defined by 
$$
\Xi^s:
\left\{\begin{array}{rcl}
G\times \mathfrak v^*
 &\longrightarrow & 
 G\times \mathfrak v^*\\
(x,\omega)
&\longmapsto &
\left({\rm Exp} (s\,\omega\cdot V) x,\omega\right) = \left({\rm Exp} \left(s\,\sum_{j=1}^d \omega_j V_j\right) x,\omega\right)
\end{array}
\right. .$$
And this invariance translates in terms of localisation of the support of  $\varsigma_t$  in view of the discussion of the end of Section~\ref{sec:enoncetheo}, whence Part (iii) (3) of Theorem~\ref{theo:schro} .
\end{enumerate}
In the situation when $\tau\in(0,1]$,  Part (iii) (1) and (2) of Theorem~\ref{theo:schro}  comes from the resolution of the transport equations satisfied by $\varsigma_t$  by using the continuity of $t\mapsto \varsigma_t$ that is proved in the next section.
This concludes the proof of Theorem~\ref{theo:schro}.

\subsection{An improvement in the case $\tau\in (0,1]$}
\label{subsec_tauin01}

Here, we show the following improvement for the case $\tau\in (0,1]$:

\begin{proposition}
\label{prop_tau01} Assume $\tau\in(0,1]$ and consider, as in Theorem \ref{theo:schro}, 
 a semi-classical measure $t\mapsto \Gamma_t d\gamma_t\in L^\infty(\R, {\mathcal M}_{ov}^+(G\times \widehat G))$ corresponding to the family of solutions of \eqref{eq:schrosc} for  an initial data   $(\psi_0^\eps)_{\eps>0}$ which is a
 bounded family in $L^2(G)$.
Then, for any $\sigma\in \mathcal B$, the map $t\mapsto \int_{G\times \widehat G} {\rm Tr}(\sigma \Gamma_t) d\gamma_t$ is locally Lipschitz on $\R$.
\end{proposition}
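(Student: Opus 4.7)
The plan is to derive a uniform-in-$\eps$ Lipschitz bound on $f_\eps(t):=({\rm Op}_\eps(\sigma)\psi^\eps(t),\psi^\eps(t))_{L^2(G)}$ for $\sigma\in\mathcal A_0\cap\mathcal B$, then transfer this bound to $F_\sigma(t):=\int_{G\times\widehat G}{\rm Tr}(\sigma\Gamma_t)d\gamma_t$ via the weak-$*$ convergence of Theorem \ref{theo:mesures}, and finally extend to $\sigma\in\mathcal B$ by density. The crucial input is that $\sigma\in\mathcal B$ kills the principal term $\frac12{\rm Op}_\eps([\sigma,H])$ in the Heisenberg equation \eqref{eq_epstauOp}, leaving only the lower-order $\eps$-corrections from \eqref{prop:commutatorDelta}.

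\textbf{Core estimate and passage to the limit.} For $\sigma\in\mathcal A_0\cap\mathcal B$, plugging $[\sigma,H]=0$ into \eqref{eq_epstauOp} yields
\begin{equation*}
\frac{d}{dt}f_\eps(t)=i\eps^{1-\tau}\bigl({\rm Op}_\eps(V\cdot\pi^\lambda(V)\sigma)\psi^\eps(t),\psi^\eps(t)\bigr)_{L^2(G)}+\frac{i\eps^{2-\tau}}{2}\bigl({\rm Op}_\eps(\Delta_G\sigma)\psi^\eps(t),\psi^\eps(t)\bigr)_{L^2(G)}.
\end{equation*}
Via the Fourier--kernel correspondence, the convolution kernels of $V\cdot\pi^\lambda(V)\sigma$ and $\Delta_G\sigma$ are obtained from the smoothing, compactly-$x$-supported kernel $\kappa_x(y)$ of $\sigma$ by differentiating in the $x$- and $y$-variables, so both symbols remain in $\mathcal A_0$ and their quantizations are $L^2(G)$-bounded uniformly in $\eps$ by the standard $\mathcal A_0$-norm estimate recalled in Section \ref{sec:3.1}. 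Since $\tau\in(0,1]$ makes both $\eps^{1-\tau}$ and $\eps^{2-\tau}$ bounded as $\eps\to 0$, one obtains $|f_\eps'(t)|\leq C_\sigma$ uniformly in $\eps\in(0,1]$ and $t\in\R$. Along the subsequence $(\eps_k)$ provided by Theorem \ref{theo:mesures}, the family $(f_{\eps_k})$ is therefore equibounded and equi-$C_\sigma$-Lipschitz, so Arzel\`a--Ascoli extracts a further subsequence converging locally uniformly on $\R$ to some $\widetilde F\in\mathrm{Lip}_{\mathrm{loc}}(\R)$. Identifying $\widetilde F$ with the weak-$*$ limit $F_\sigma$ in $L^\infty(\R)$ shows that $F_\sigma$ admits a locally Lipschitz representative with constant $C_\sigma$.

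\textbf{Extension to $\mathcal B$.} For general $\sigma\in\mathcal B$, Lemma \ref{lem_stateB}(1) provides approximations $\sigma_j\in\mathcal A_0\cap\mathcal B$ with $\|\sigma-\sigma_j\|_\mathcal{A}\to 0$, and the total-mass bound on $\Gamma_t d\gamma_t$ from Theorem \ref{theo:mesures} gives uniform convergence $F_{\sigma_j}\to F_\sigma$ on $\R$. The main obstacle is that the Lipschitz constants $C_{\sigma_j}$ produced by the core estimate involve $\|V\cdot\pi^\lambda(V)\sigma_j\|_\mathcal{A}$ and $\|\Delta_G\sigma_j\|_\mathcal{A}$, which are not controlled by $\|\sigma_j\|_\mathcal{A}$ alone and may blow up as $\sigma_j\to\sigma$ (a uniform limit of Lipschitz functions is in general only continuous). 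To close this gap, the plan is to exploit the commuting structure: use the spectral decomposition $\sigma=\sum_{n\in\N}\Pi_n\sigma\Pi_n$ on $\{\lambda\neq 0\}$ (Lemma \ref{lem_sigmad+a}) together with the scalar restriction to $\{\lambda=0\}$ (Lemma \ref{lem_sigma_rest}), approximate each block by Hermite-truncated smoothing symbols $\psi(uH)\Pi_n\sigma\Pi_n\psi(uH)$ of Corollary \ref{cor_prop_PiindotS}, and sum the contributions pairing block-by-block against $\Gamma_t=\sum_n\Pi_n\Gamma_t\Pi_n$ (Theorem \ref{theo:schro}(i)), using the finiteness of the total mass of $\Gamma_t d\gamma_t$. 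The hard part will be organising this block decomposition so that the Lipschitz contributions sum to a finite constant depending only on $\sigma$, which should be delivered by the rapid decay of the spectral trace of $\Gamma_t$ combined with the polynomial growth in $n$ of the $\mathcal Z^{(\lambda)}$-coefficients appearing through Lemma \ref{lem_sigma_comH}.
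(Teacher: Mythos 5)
Your core estimate and the identification of its limit are correct and agree with the paper: for $\sigma\in\mathcal A_0\cap\mathcal B$, the vanishing of ${\rm Op}_\eps([\sigma,H])$ in \eqref{eq_epstauOp} leaves only terms of size $\eps^{1-\tau}$ and $\eps^{2-\tau}$, which are $O(1)$ for $\tau\in(0,1]$, and the symbols $V\cdot\pi^\lambda(V)\sigma$ and $\Delta_G\sigma$ remain in $\mathcal A_0$ with uniformly $L^2$-bounded quantization. Arzel\`a--Ascoli plus uniqueness of the weak-$*$ limit in $L^\infty(\R)$ then shows that $F_\sigma$ has a locally Lipschitz representative for each such $\sigma$. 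Up to this point your argument matches the paper's; the paper merely organizes the Arzel\`a--Ascoli step as a diagonal extraction over a countable subset of $\mathcal A_0\cap\mathcal B$ that is dense in $\mathcal B$ (Lemma~\ref{lem_stateB}~(1)), so that a single subsequence works for all test symbols simultaneously.

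The genuine gap is in your extension to general $\sigma\in\mathcal B$. Your proposed block-by-block argument does not close it. First, there is no a priori decay of the spectral trace of $\Gamma_t$: it is merely a nonnegative trace-class operator, so the sequence $({\rm Tr}\,\Gamma_{n,t})_n$ need not decay faster than what the trace condition gives, and the factor $(2n+d)/|\lambda|$ coming from Lemma~\ref{lem_sigma_comH} grows linearly in $n$, so the series of Lipschitz contributions has no reason to converge. Second, and more basically, the Lipschitz constant $C_{\sigma_j}$ is controlled by derivatives in $x$ and $\lambda$ of the kernel of $\sigma_j$ (through $V\cdot\pi^\lambda(V)\sigma_j$ and $\Delta_G\sigma_j$), which the Hermite-block truncation $\psi(uH)\Pi_n\sigma\Pi_n\psi(uH)$ does nothing to tame: a general $\sigma\in\mathcal B$ is only a $C^*$-limit, with no smoothness in $x$ or $\lambda$, and its blocks $\Pi_n\sigma\Pi_n$ inherit that lack of smoothness. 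The uniform limit of Lipschitz functions with unbounded constants is in general only continuous, as you yourself note; that continuity is the correct conclusion for general $\sigma\in\mathcal B$ and is exactly what the subsequent transport arguments (see the Remark after the proposition, which invokes ``weak continuity'') require. The paper's proof handles this point differently and more cleanly: after the diagonal extraction it views $\sigma\mapsto\ell_t(\sigma)$ as a state of the $C^*$-algebra $\mathcal B$ for every $t\in\R$, invokes Lemma~\ref{lem_stateB}~(2) to represent it by an operator-valued measure with $\Gamma_t=\sum_n\Pi_n\Gamma_t\Pi_n$, and then reconciles this pointwise-in-$t$ family with the a.e.-defined semi-classical measure of Theorem~\ref{theo:mesures}. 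This state-theoretic step is essential for the proposition to make sense for every $t$ (not merely almost every $t$), and it is absent from your proposal. You should drop the block-decomposition attempt; record instead that the Lipschitz bound holds on the dense subalgebra $\mathcal A_0\cap\mathcal B$, that the limit functional is a state of $\mathcal B$ at each $t$ by Lemma~\ref{lem_stateB}, and that weak continuity of $t\mapsto\Gamma_t d\gamma_t$ follows.
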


The proof of Proposition \ref{prop_tau01} relies on \eqref{eq_epstauOp} and  Lemma~\ref{lem_stateB}.

\begin{remark}
\begin{enumerate}
\item  The weak continuity of the map $t\mapsto \Gamma_t d\gamma_t$ granted in Proposition \ref{prop_tau01} allows us  to solve the transport equations of~(ii) Point~(1), and so for~(i) and~(ii) Point~(2).  
\item The proof of Proposition~\ref{prop_tau01} implies that,
under the assumptions of Proposition~\ref{prop_tau01},
if $\eps_k$ is the sub-sequence realising a semi-classical measure $\Gamma_t d\gamma_t$, then we have for all $t\in\R$
  \begin{equation}\label{limittbyt}
 \forall \sigma\in \mathcal A^{(d)}_H \qquad
 \left({\rm Op}_\eps(\sigma)\psi^{\eps_k} (t) ,\psi^{\eps_k}(t) \right) \Tend{k}{+\infty} \int_{G\times \widehat G} {\rm Tr} (\sigma(x,\lambda) \Gamma_t(x,\lambda) )d\gamma_t (x,\lambda),
 \end{equation} 
 meaning that one can pass to the limit $t$-by-$t$ and not only when averaged in time as in the original statement of Theorem~\ref{theo:schro}.  
\end{enumerate}
 \end{remark}

\begin{proof}[Proof of Proposition \ref{prop_tau01}]
For any $\sigma \in \mathcal B$,
 \eqref{eq_epstauOp} gives 
\begin{equation}
\label{eq_ddtell}
{d\over dt} \ell_{\eps,t}(\sigma) = O_{\sigma}(\eps^{1-\tau}),
\quad\mbox{where}\quad
\ell_{\eps,t}(\sigma):=\left({\rm Op}_\eps\left(\sigma\right) \psi^\eps(t) ,\psi^\eps(t) \right),
\end{equation}
in the sense that 
we may assume the distribution $t\mapsto \ell_{\eps,t}(\sigma)$ to be continuous and even $C^1$ on $\R$ 
and that  ${d\over dt} \ell_{\eps,t}(\sigma)$  is uniformly bounded with respect to $t$ in a bounded interval of $\R$ and $\eps\in (0,1)$.
Consider a sequence $(\eps_j)_{j\in \N}$  in $(0,1)$ converging to 0 as $j\to \infty$.
By the Arz\'ela-Ascoli theorem and  \eqref{eq_ddtell},
we can extract  a subsequence $(\eps_{j_k})_{k\in \N}$
such that, as $k\to \infty$,
 $\eps_{j_k}\to 0$ 
and $(\ell_{\eps_{j_k},\cdot}(\sigma))_{k\in \N}$
converges to a continuous function 
$t\mapsto\ell_t(\sigma)$ locally uniformly on $\R$ for all $\sigma\in{\mathcal B}$ (this requires to consider a dense subset of ${\mathcal B}$ and a diagonal extraction procedure).
We proceed as in the proof of Theorem~\ref{theo:mesures} 
 (see also \cite{FF,FF2}) using the $C^*$-algebra $\mathcal B$ with its properties in Lemma \ref{lem_stateB} instead of $\mathcal A$: 
 either $L:=\limsup_{k\to 0}\| \psi^{\eps_{j_k}}_0\|_{L^2(G)}=0$ and $\ell_t=0$
or 
$L^{-1}\ell_t$ is a state of $\mathcal B$ 
for each $t\in \R$. Let
 $\Gamma_t d\gamma_t\in{\mathcal M}_{ov}(G\times \widehat G)$ with $\Gamma_t = \sum_{n\in \N} \Pi_n \Gamma_t \Pi_n$ corresponding to $\ell_t$.
Up to a further extraction of a converging subsequence (for which we keep the same notation), 
we may assume that
 $\Gamma_t d\gamma_t$ coincides with the semi-classical measure in Part (i) and Part (ii) (1)
 of Theorem \ref{theo:schro}, whence the result.
\end{proof}


\section{Uniform $\eps$-oscillation and marginals of semi-classical measures}\label{sec:densitylimit}

We prove here Theorem~\ref{theorem1} in Section \ref{sec:prooftheorem1}, using the  notion of $\eps$-oscillation explained in Section~\ref{subsec:epsosc}, which allows to relate the weak limits of energy density and  marginals of semi-classical measures, that we first study 
 in Section~\ref{subsec_marginal}.

\subsection{Marginals of semi-classical measures}
\label{subsec_marginal}

In this section, we describe the consequence of Theorem~\ref{theo:schro} for the marginals of semi-classical measures for the solutions to the Schr\"odinger equation. 
Let $(\psi^\eps_0)_{\eps>0}$ be a bounded family of $L^\infty(\R,L^2(G))$, the associated semi-classical measure~$\Gamma_t d\gamma_t$ and subsequence $\eps_k$ given by Theorem \ref{theo:schro} whose notation we use.
  We also consider the corresponding measurable map 
$t\mapsto\varrho_t$ in $L^\infty(\R;\mathcal M^+(G))$
given by the marginal  on $G$ of the measure ${\rm Tr}\left( \Gamma_t(x,\lambda)\right) d\gamma_t(x,\lambda)$, that is, formally
$$
\varrho_t := \int_{\widehat G}
{\rm Tr}\left( \Gamma_t\right) d\gamma_t(x,d\lambda),
$$
or more precisely, 
for all
  $\phi\in{\mathcal C}^\infty_c (G)$ and $\theta\in L^1(\R)$, 
$$
\int_{\R\times G} \theta(t) \phi(x) d\varrho_t(x) dt
=\int_{\R}  \theta(t)  
\int_{G\times \widehat G} 
\phi(x)  {\rm Tr} \left(\Gamma_t(x,\lambda) \right) d\gamma_t(x,\lambda) \, dt.
$$
We  define two measurable maps in $L^\infty(\R;\mathcal M^+(G))$  via  
  \begin{align*}
\varrho_t^{\mathfrak z^*}(x)
&:= \int_{{\mathfrak z}^*\setminus\{0\}} {\bf 1}_{\lambda\not=0} {\rm Tr}\, \Gamma_t (x,\lambda) d\gamma_t(x,\lambda),
\\
\varrho_t^{\mathfrak v^*}(x)
&:= \int_{{\mathfrak v}^*} {\bf 1}_{\lambda =0} {\rm Tr}\, \Gamma_t (x,(0,\omega)) d\gamma_t(x,(0,\omega))
= \int_{{\mathfrak v}^*}  \varsigma_t(x,d\omega),
 \end{align*}
 and, because of the decomposition of $\widehat G$ recalled in Section~\ref{subsec_F}, 
 $
\varrho_t=\varrho_t^{\mathfrak z^*}+\varrho_t^{\mathfrak v^*}.$
Besides, Theorem~\ref{theo:schro} directly implies the 
next proposition.

\begin{proposition}\label{prop_theorem1} 
In the setting just above, we have the following properties: 
 \begin{enumerate}
\item 
If $\tau\in (0,1]$,
the maps $t\mapsto \rho_t$, $t\mapsto \rho_t^{\mathfrak z^*}$ 
and $t\mapsto \rho_t^{\mathfrak v^*}$ 
are  weakly continuous from $\R$ to $\mathcal M(G)$.
\item The measures $ \varrho_t^{\mathfrak z^*}$ satisfy the following properties:
 \begin{enumerate}
\item If $\tau\in (0,2)$, $\partial_t \varrho_t^{\mathfrak z^*}=0$ in the sense of distribution on $\R\times G$,
\item If $\tau=2$, 
$$
\varrho_t^{\mathfrak z^*} = \sum_{n\in \N} 
\int_{{\mathfrak z}^*\setminus\{0\}} 
\gamma_{n,t}(x,d\lambda) 
\quad\mbox{where}\quad
\gamma_{n,t} (x,\lambda):= {\bf 1}_{\lambda\not=0} {\rm Tr}\, \Gamma_{n,t} (x,\lambda) d\gamma_t(x,\lambda),
$$ 
and in the sense of distributions on $\R \times G \times (\mathfrak z^*\setminus\{0\})$,
we have 
$$
\left(\partial_t -{2n+d\over 2|\lambda|}  {\mathcal Z}^{(\lambda)}\right)\gamma_{n,t}=0.
$$
\item If $\tau>2$, then 
$\varrho_t^{\mathfrak z^*}=0$,
\end{enumerate}

\item The measures $ \varrho_t^{\mathfrak v^*}$ satisfy the following properties:
\begin{enumerate}
\item If $\tau\in(0,1)$, for all $t\in\R$, 
$ \varrho_t^{\mathfrak v^*}=  \varrho_0^{\mathfrak v^*}$.
\item If $\tau=1$, for all $t\in \R$,
$\displaystyle{
\varrho_t^{\mathfrak v^*} (x) =
\int_{\mathfrak v^*} 
\varsigma_0 \left({\rm Exp} (t\,  \omega\cdot  V ) x,d\omega\right).
}$
\item if $\tau>1$, 
$\varrho_t^{\mathfrak v^*}(x)
=\varsigma_t(x,0)$.
\end{enumerate}
\end{enumerate}
\end{proposition}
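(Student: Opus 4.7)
The proposition is essentially obtained by projecting the statements of Theorem~\ref{theo:schro} onto the $G$-factor, so the plan is to check that the trace and integration in $\lambda$ commute with the transport/invariance identities provided there. The main point to handle carefully is the weak continuity in item~(1), which requires combining Proposition~\ref{prop_tau01} with an approximation argument because a symbol of the form $\phi(x)\otimes\mathrm{Id}$ is not in $\mathcal A_0$.

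For item~(1), I would first approximate $\phi\in C_c(G)$ pointwise by symbols of the form $\sigma_n(x,\lambda)=\phi(x)e_n(\lambda)$ where $e_n$ is a scalar spectral cut-off built from $(\mathbf 1_{|\lambda|>1/n}+\mathbf 1_{|\lambda|<n})\,\psi_n(H(\lambda))$ with $\psi_n\in C_c^\infty(\R)$ increasing to $1$; each $\sigma_n$ lies in $\mathcal A_0\cap \mathcal B$ by Lemma~\ref{lem_g(lambda)} and the remarks following Lemma~\ref{lem_stateB}. Proposition~\ref{prop_tau01} gives that $F_n(t):=\int_{G\times\widehat G}\mathrm{Tr}(\sigma_n\Gamma_t)\,d\gamma_t$ is locally Lipschitz on $\R$. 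Since $0\le e_n\le \mathrm{Id}$ and $\int \mathrm{Tr}(\Gamma_t)\,d\gamma_t<\infty$ uniformly for $t$ in a bounded interval (by the mass bound in Theorem~\ref{theo:mesures}), the Lipschitz constants $L_n$ are bounded uniformly in $n$. Letting $n\to\infty$ and using monotone/dominated convergence, $F_n(t)\to \int \phi\,d\varrho_t$ pointwise, and the limit inherits the uniform Lipschitz bound, hence continuity. The same argument, with an additional cut-off in $\lambda$ localising away from or near $\lambda=0$, yields the weak continuity of $t\mapsto \varrho_t^{\mathfrak z^*}$ and $t\mapsto \varrho_t^{\mathfrak v^*}$.

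For item~(2), I fix $n\in\N$ and take the scalar trace of the $\mathcal{L}(\mathcal V_n)$-valued identity from Theorem~\ref{theo:schro}(ii), which yields a distributional identity for the scalar measure $\gamma_{n,t}$ on $\R\times G\times(\mathfrak z^*\setminus\{0\})$: $\partial_t\gamma_{n,t}=0$ when $\tau\in(0,2)$, a transport equation along $\frac{2n+d}{2|\lambda|}\mathcal Z^{(\lambda)}$ when $\tau=2$, and $\gamma_{n,t}=0$ when $\tau>2$. Since $\sum_n \gamma_{n,t}=\mathbf 1_{\lambda\ne 0}\,\mathrm{Tr}(\Gamma_t)\,d\gamma_t$ is a finite measure (by trace-class summability of $\Gamma_t$), the series converges in $\mathcal M^+(G\times(\mathfrak z^*\setminus\{0\}))$ uniformly in $t\in$ bounded intervals. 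For (a) and (c), summing over $n$ and then integrating in $\lambda$ (a continuous operation on finite measures) gives the claim directly; case (b) is precisely the transport statement for $\gamma_{n,t}$.

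For item~(3), the three sub-cases follow immediately from Theorem~\ref{theo:schro}(iii) by integrating $\varsigma_t(x,d\omega)$ in $\omega\in\mathfrak v^*$: constancy of $\varsigma_t$ gives (a); the explicit formula $\varsigma_t(x,\omega)=\varsigma_0(\mathrm{Exp}(t\,\omega\cdot V)x,\omega)$ integrated in $\omega$ gives (b); and the fact that $\varsigma_t$ is supported on $\{\omega=0\}$ yields (c), with $\varrho_t^{\mathfrak v^*}(x)=\varsigma_t(x,\{0\})$ in the notation of the statement. The hardest conceptual step, as indicated above, is making sure that the weak continuity of item~(1) holds pointwise in $t$ (so that the transport formulas in (2)(b) and (3)(b) are genuinely valid for every $t\in\R$), which is where the uniform Lipschitz estimate extracted from Proposition~\ref{prop_tau01} is essential.
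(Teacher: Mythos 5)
Your treatment of items (2) and (3) is correct and matches the paper's viewpoint: the proposition is obtained by taking scalar traces and integrating out the $\lambda$- (resp.\ $\omega$-) variable in the identities of Theorem~\ref{theo:schro}, which the paper indeed states ``directly implies'' the proposition.

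There is, however, a genuine gap in your argument for item (1). You approximate $\phi(x)\,\mathrm{Id}$ by symbols $\sigma_n(x,\lambda)=\phi(x)e_n(\lambda)\in\mathcal A_0\cap\mathcal B$, and claim that the local Lipschitz constants $L_n$ of $F_n(t)=\int\mathrm{Tr}(\sigma_n\Gamma_t)\,d\gamma_t$ are bounded uniformly in $n$ ``since $0\le e_n\le\mathrm{Id}$ and the mass of $\Gamma_t d\gamma_t$ is controlled''. This does not follow. Tracking the proof of Proposition~\ref{prop_tau01} through \eqref{eq_epstauOp}, the Lipschitz bound for $F_n$ is governed (for $\tau\le 1$) by $\limsup_\eps\eps^{1-\tau}\|{\rm Op}_\eps(V\cdot\pi^\lambda(V)\sigma_n)\|_{{\mathcal L}(L^2)}$, which involves the operator norm of $\pi^\lambda(V_j)\,e_n(\lambda)$. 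Because $\pi^\lambda(V_j)$ is an unbounded operator that acts like $\sqrt{|\lambda|(2m+d)}$ on the eigenspace $\mathcal V_m$ (as is visible from the creation--annihilation calculus of Appendix~\ref{sec_Hermite+pflem_sigma_comH}, see \eqref{eq_piPQZ}), and since your cut-offs $e_n$ must eventually include higher and higher modes of $H(\lambda)$ (and larger $\omega$ at $\lambda=0$) to converge strongly to $\mathrm{Id}$, the quantities $\|\pi^\lambda(V_j)e_n(\lambda)\|$ grow with $n$. The pointwise bound $0\le e_n\le\mathrm{Id}$ gives equi-boundedness of the $F_n$, not equicontinuity, so the limit $F(t)=\int\phi\,d\varrho_t$ need not inherit a Lipschitz bound from the $F_n$ by this argument alone.

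A cleaner route to item (1) is to derive it from items (2) and (3) instead of vice versa. For $\tau\in(0,1)$ both marginals $\varrho_t^{\mathfrak z^*}$ and $\varrho_t^{\mathfrak v^*}$ are constant in $t$ (items (2)(a) and (3)(a)), and for $\tau=1$ one has constancy of $\varrho_t^{\mathfrak z^*}$ and the explicit formula \eqref{eq:rhoEuclidean} for $\varrho_t^{\mathfrak v^*}$, which is visibly weakly continuous in $t$ because the flow $t\mapsto{\rm Exp}(t\,\omega\cdot V)x$ is continuous. To promote the distributional transport equations of Theorem~\ref{theo:schro} to genuine pointwise formulas (so that ``for all $t\in\R$'' makes sense), one invokes Proposition~\ref{prop_tau01} and the remark following it, applied to test symbols in $\mathcal A_H^{(d)}$: this gives $t$-by-$t$ convergence and hence a continuous representative of $t\mapsto\Gamma_t d\gamma_t$ paired against such symbols, which is enough to integrate the transport equation. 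This avoids the need for a uniform Lipschitz bound against unbounded approximations of $\phi(x)\,\mathrm{Id}$.
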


This statement is the core of the proof of Theorem~\ref{theorem1} by use of the concept of $\eps$-oscillation that we now discuss.

\subsection{$\eps$-oscillating families}
\label{subsec:epsosc} We stick here to a general framework and consider a family $(u^\eps(t))_{\eps>0}$ bounded in $L^\infty(\R, L^2(G))$. Our aim is  
 to link here the weak limits of the measure $|u^\eps(t,x)|^2 dxdt$ and the semi-classical measures of the family $(u^\eps(t))_{\eps>0}$. 
In analogy with~\cite{gerard_X} and Section~4.4 of~\cite{FF} for H-type groups, we introduce the notion of uniform strict
 $\eps$-oscillations for time-dependent families of $L^\infty(\R, L^2(G))$.

 \begin{definition}\label{def:epsosc}
Let $(u^\eps)_{\eps>0}$ be a bounded family in $L^\infty(\R,L^2(G))$.
We shall say that $(u^\eps)$ is uniformly $\eps$-{\it oscillating} when we have for all $T>0$,
$$
\limsup_{\eps\rightarrow 0} \sup_{t\in[-T,T]} \left\|{\bf 1}_{-\eps^2\Delta_G>R} u^\eps(t)
 \right\|_{L^2(G)}\Tend{R}{+\infty}0.
 $$
If moreover, we have 
$$\limsup_{\eps\rightarrow 0}\sup_{t\in[-T,T]}  \left\| {\bf 1}_{-\eps^2\Delta_G<\delta} 
 u^\eps (t)\right\|_{L^2(G)}\Tend{\delta}{0}0,
 $$
then the family $(u^\eps)$ is said to be uniformly strictly $\eps$-oscillating.
 \end{definition}

The interest of the notion of $\eps$-oscillation relies on the fact that it gives an indication of the size of the oscillations that have to be taken into account. It legitimates the use of semi-classical pseudodifferential operators and semi-classical measures in order  to describe the time-averaged densities~(\ref{def:timeavdens}) of these families. 
 Indeed, we have the following  proposition.

\begin{proposition}
\label{prop:eops}
\begin{enumerate}
\item 
Let $(u^\eps)\in L^\infty(\R, L^2(G))$ be a uniformly $\eps$-oscillating family admitting a time-averaged semi-classical measure $t\mapsto \Gamma_t d\gamma_t$ for the sequence $(\eps_k)_{k\in \mathbb N}$. Then
 for all
  $\phi\in{\mathcal C}^\infty_c (G)$ and $\theta\in L^1(\R)$, 
$$
\lim_{k\rightarrow +\infty}  \int_{\R\times G} \theta(t) \phi(x) |u^{\eps_k}(t,x)|^2 dxdt
=\int_{\R}  \theta(t)  
\int_{G\times \widehat G} 
\phi(x)  {\rm Tr} \left(\Gamma_t(x,\lambda) \right) d\gamma_t(x,\lambda) \, dt,
$$
\item	If moreover $(u^\eps)$ is  uniformly strictly $\eps$-oscillating family, then the semi-classical measure does not charge the trivial representation $1_{\widehat G}$ in the sense that
$$
\gamma_t( G\times \{1_{\widehat G}\})=0
\quad \mbox{for almost every} \ t\in \R.
$$
\end{enumerate}
\end{proposition}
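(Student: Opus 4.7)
The plan is to prove both parts by the standard strategy of approximating the scalar multiplication operator $M_\phi: f \mapsto \phi f$ by a semi-classical pseudodifferential operator of the form ${\rm Op}_\eps(\sigma_R)$ with $\sigma_R \in \mathcal A_0$, and then using the $\eps$-oscillation property to control the remainder.

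For Part (1), I fix a cutoff $\chi \in C_c^\infty([0,\infty))$ with $\chi \equiv 1$ on $[0,1]$ and set $\chi_R(s) = \chi(s/R)$. The symbol
$$\sigma_R(x,\lambda) := \phi(x) \, \chi_R(H(\lambda))$$
belongs to $\mathcal A_0$: the spectral symbol $\chi_R(H(\lambda))$ is smoothing by Remark~\ref{rem:linkSinfty}, and multiplication by $\phi \in C_c^\infty(G)$ produces the required compact $x$-support. Since $H(\lambda) = \mathcal F(-\Delta_G)(\lambda)$, the semi-classical quantization formula gives the operator identity ${\rm Op}_\eps(\sigma_R) = M_\phi \, \chi_R(-\eps^2\Delta_G)$. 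Writing
$$M_\phi = {\rm Op}_\eps(\sigma_R) + M_\phi \bigl(1-\chi_R(-\eps^2\Delta_G)\bigr)$$
and using $0 \leq 1-\chi_R \leq {\bf 1}_{(R,\infty)}$ with Cauchy--Schwarz and the functional calculus, the error term is controlled by
$$\|\theta\|_{L^1(\R)} \|\phi\|_{L^\infty} \|u^\eps\|_{L^\infty_tL^2_x} \, \sup_{t\in\mathrm{supp}\,\theta} \|{\bf 1}_{-\eps^2\Delta_G > R}\, u^\eps(t)\|_{L^2(G)},$$
which is $o_R(1)$ uniformly in $\eps$ by the uniform $\eps$-oscillation hypothesis.

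For each fixed $R$, the definition of time-averaged semi-classical measure lets me pass to the limit $\eps_k \to 0$ in $\ell_{\eps_k}(\theta, \sigma_R)$, yielding the integral of $\phi(x)\,\mathrm{Tr}(\chi_R(H(\lambda)) \Gamma_t(x,\lambda))$ against $\theta(t)\,d\gamma_t(x,\lambda)\,dt$. It then remains to let $R \to \infty$: the trace-class operators $\Gamma_t(x,\lambda)$ satisfy $\chi_R(H(\lambda))\Gamma_t \to \Gamma_t$ in trace norm (since $\chi_R(H(\lambda)) \to \mathrm{Id}$ in SOT and $\Gamma_t$ is trace class), so  the dominated convergence theorem applied to the finite operator-valued measure $\Gamma_t\,d\gamma_t$ yields the claimed identity.

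For Part (2), I reverse the cutoff: pick $\eta \in C_c^\infty(\R)$ with $\eta \equiv 1$ near $0$ and $\mathrm{supp}\,\eta \subset [-1,1]$, and consider $\tau_\delta(x,\lambda) := \phi(x)\,\eta(H(\lambda)/\delta) \in \mathcal A_0$ for an arbitrary non-negative $\phi \in C_c^\infty(G)$. The same identity gives ${\rm Op}_\eps(\tau_\delta) = M_\phi\,\eta(-\eps^2\Delta_G/\delta)$, whose operator norm on $u^\eps$ is bounded by $\|\phi\|_{L^\infty}\|{\bf 1}_{-\eps^2\Delta_G \leq \delta}\, u^\eps\|_{L^2}^2$, which is $o_\delta(1)$ uniformly in $\eps$ by the strict $\eps$-oscillation assumption. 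Passing to the limit along $\eps_k$ and letting $\delta\to 0$ (using dominated convergence exactly as in Part (1)), I obtain
$$\int_{\R} \theta(t) \int_{G\times \widehat G} \phi(x)\, {\bf 1}_{\{H(\lambda)=0\}}(\lambda)\,\mathrm{Tr}(\Gamma_t(x,\lambda))\,d\gamma_t(x,\lambda)\,dt = 0.$$
The spectrum of $H(\lambda)$ is $\{|\lambda|(2n+d):n\in\mathbb N\}$ which is strictly positive for $\lambda\in\mathfrak z^*\setminus\{0\}$, and $H(0,\omega)=|\omega|^2$ vanishes only at $\omega=0$; therefore $\{H(\lambda)=0\}$ reduces in $\widehat G$ to the single point $\{1_{\widehat G}\} = \{\pi^{(0,0)}\}$. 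Varying $\phi \in C_c^\infty(G)$ and $\theta \in L^1(\R)$ then forces $\gamma_t(G\times \{1_{\widehat G}\})=0$ for almost every $t\in\R$.

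The main technical obstacle is justifying the trace-norm convergence $\chi_R(H(\lambda))\Gamma_t \to \Gamma_t$ and the interchange of limits in $R$, $\delta$, and $\eps_k$; once the operator identity ${\rm Op}_\eps(\phi(x) b(H(\lambda))) = M_\phi\, b(-\eps^2\Delta_G)$ is established from the quantization formula together with $H(\eps^2\lambda)=\eps^2 H(\lambda)$, the rest is a routine application of the functional calculus and dominated convergence adapted to the operator-valued setting from Proposition~\ref{prop:states}.
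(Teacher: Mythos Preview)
Your proof is correct and follows essentially the same approach as the paper: split the multiplication operator using a spectral cutoff $\chi_R(H(\lambda))$ (resp.\ $\eta(H(\lambda)/\delta)$), control the remainder via the (strict) $\eps$-oscillation hypothesis, pass to the limit along $\eps_k$ in the $\mathcal A_0$-part, and then let $R\to\infty$ (resp.\ $\delta\to 0$) by dominated convergence. The only minor point you glossed over is that the bound $\sup_{t\in\mathrm{supp}\,\theta}\|\mathbf 1_{-\eps^2\Delta_G>R}u^\eps(t)\|_{L^2}$ requires $\theta$ compactly supported, so one should first treat $\theta\in C_c^\infty(\R)$ and then extend to $L^1(\R)$ by density, as the paper does explicitly.
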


\begin{proof}  
Let $\phi\in{\mathcal C}^\infty_c (G)$ and let $\theta\in L^1(\R)$. 
 We can write for any $R>0$
  $$
 \int_{\R\times G} \theta(t) \phi(x) |u^{\eps}(t,x)|^2 dt dx = I_{1,\eps,R} +  I_{2,\eps,R},
 $$
 where
 $$
I_{j,\eps,R}:=\int_{\R} \theta(t)  
\left({\rm Op}_\eps (\sigma_{j,R}) u^\eps(t),u^\eps(t)\right)_{L^2(G)} dt,
\quad j=1,2,
$$
with
$$
\sigma_{1,R} (x,\lambda):=  \phi(x) \, \chi\left({\eps^2\over R}H(\lambda)\right)
\qquad\mbox{and}\qquad
 \sigma_{2,R} (x,\lambda):=  \phi(x) \, (1-\chi)\left({\eps^2\over R}H(\lambda)\right),
 $$
having fixed a function $\chi\in{\mathcal C}^\infty(\R)$  such that $0\leq \chi\leq 1$, $\chi=0$ on $]-\infty,1]$ and $\chi=1$ on $[2,+\infty[$. 
Note that the symbol $\sigma_{2,R}$ is in ${\mathcal A}_0$
while, using the notation of \cite{FF}, the symbol $\sigma_{1,R}$ is in  $S^0$. 
 For $I_{1,\eps,R}$, let us first assume that    $\theta$ is compactly supported in $[-T,T]$ for some $T>0$. We have
$$
|I_{1,\eps,R}| 
\leq 
\|\theta\|_{L^1(\R)} 
\|\phi\|_{L^\infty(G)}
\|u^\eps\|_{L^\infty(\R,L^2(G))}
\sup_{t\in [-T,T]}\left\|\chi\left(-\eps^2 R^{-1}\Delta_G\right) u^\eps\right\|_{L^2(G)},
$$
so 
$$
\lim_{R\to+\infty}\limsup_{k\to\infty}I_{1,\eps_k,R} =0.
$$
since  $(u^\eps)$ is $\eps$-oscillating and $0\leq \chi\leq {\bf 1}_{x>1}$.
By density of $C_c^\infty(\R)$ in $L^1(\R)$,  this is also true for any $\theta\in L^1(\R)$.

\noindent For $I_{2,\eps,R}$, 
by Theorem \ref{theo:mesures},
we have
$$
\lim_{k\to \infty}
I_{2,\eps_k,R}  
=
\int_{\R}  \theta(t)  
\int_{G\times \widehat G} 
 {\rm Tr} \left(\sigma_{2,R}(x,\lambda)\Gamma_t(x,\lambda) \right) d\gamma_t(x,\lambda) \, dt .
$$
Since we have
$$
|{\rm Tr} \left(\sigma_{2,R}(x,\lambda)\Gamma_t(x,\lambda) \right)| \leq \|\phi\|_{L^\infty(G)}{\rm Tr} \left(\Gamma_t(x,\lambda) \right), 
$$
and
$$
\lim_{R\to +\infty}{\rm Tr} \left(\sigma_{2,R}(x,\lambda)\Gamma_t(x,\lambda) \right)
=
\phi(x){\rm Tr} \left(\Gamma_t(x,\lambda) \right),
$$ 
the Lebesgue dominated convergence theorem implies
$$
\lim_{R\to+\infty}\lim_{k\to\infty}I_{2,\eps_k,R} =
\int_{\R}  \theta(t)  
\int_{G\times \widehat G} 
\phi(x)  {\rm Tr} \left(\Gamma_t(x,\lambda) \right) d\gamma_t(x,\lambda) \, dt .
$$
This yields Part (1).

\medskip

For Part (2), we see that if 
 moreover $(u^\eps)$ is  uniformly strictly $\eps$-oscillating family,
 then  for any $\theta\in C_c(\R)$ and $\phi\in C_c^\infty(G)$, 
 the expression
 $$
 \int_{\R} \theta(t)  (\phi (x) (1-\chi) (-\frac{\eps^2}\delta \Delta_G) u^\eps (t), u^\eps (t))_{L^2(G)} dt
 $$
 is bounded by 
$$
\|\theta\|_{L^1(\R)}
\sup_{t\in {\rm supp} \theta} \| (1-\chi) (-\frac{\eps^2}\delta \Delta_G) u^\eps (t)\|_{L^2(G)} 
\sup_{t\in \R} \|u^\eps (t)\|_{L^2(G)}  \|\phi\|_{L^\infty(G)} ,
$$
 which tends to 0 when $\eps=\eps_k$ with $k\to+\infty$ and then $\delta\to 0$.
 However, by Theorem \ref{theo:mesures}, the limit of the same expression as $\eps=\eps_k$ with $k\to+\infty$ is 
 $$
 \int_{\R} \theta \int_{G\times \widehat G} 
 {\rm Tr}\left( \phi(x) (1-\chi)(-\frac 1 \delta \mathcal F \Delta_G (\lambda))\Gamma_t(x,\lambda)  \right)
d\gamma_t(x,\lambda),
$$
which, by Lebesgue's dominated convergence theorem, 
converges as $\delta\to 0$ to 
$$
 \int_{\R} \theta (t)
 \int_{G\times \widehat G} \phi(x) 1_{\pi=1_{\widehat G}}
d\gamma_t (x,\lambda) ,
$$
since $(1-\chi)(\frac 1 \delta \mathcal F \Delta_G)(\pi) $ tends to 0 in SOT for any non-trivial representation $\pi \in \widehat G$ while at $\pi=1_{\widehat G}$ it is equal to 1.
Consequently this last expression is zero, and this concludes the proof of Proposition \ref{prop:eops}.
\end{proof}

The fact that a family is uniformly $\eps$-oscillating can  be derived from Sobolev bounds. 

\begin{proposition}
\label{prop:sobcri}
\begin{itemize}
	\item If there exists $s>0$ and $C>0$ such that 
$$\forall \eps>0,\;\; \sup_{t\in[-T,T]} \| (-\eps^2\Delta_G)^{s\over 2} u^\eps(t) \|_{L^2(G)}
\leq C,$$
then $(u^\eps)_{\eps}$ is $\eps$-oscillating.
\item If there exists $s>0$ and $C>0$ such that 
$$\forall \eps>0,\;\; \sup_{t\in[-T,T]} \| (-\eps^2\Delta_G)^{s\over 2} u^\eps(t) \|_{L^2(G)}
+\sup_{t\in[-T,T]} \| (-\eps^2\Delta_G)^{-{s\over 2}} u^\eps(t) \|_{L^2(G)}
\leq C,$$
then $(u^\eps)_{\eps}$ is strictly $\eps$-oscillating.
\end{itemize}
\end{proposition}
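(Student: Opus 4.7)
The plan is to prove both statements by a direct application of the spectral theorem to the self-adjoint operator $-\Delta_G$ (whose essential self-adjointness is recalled in Section~\ref{freq}). Denote by $\{E_\mu\}_{\mu \geq 0}$ the spectral resolution of $-\Delta_G$. Then the spectral resolution of $-\eps^2 \Delta_G$ is $\{E_{\mu/\eps^2}\}_{\mu \geq 0}$, and for any Borel function $f$ on $[0,\infty)$ and any $v \in L^2(G)$ lying in the appropriate domain,
$$
\|f(-\eps^2\Delta_G) v\|_{L^2(G)}^2 = \int_0^\infty |f(\mu)|^2 \, d\|E_{\mu/\eps^2} v\|_{L^2(G)}^2.
$$

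For the first assertion, I would estimate $\|\mathbf{1}_{-\eps^2\Delta_G > R}\, u^\eps(t)\|_{L^2(G)}$ by writing, for any $\mu > R$, the trivial pointwise inequality $1 \leq (\mu/R)^s$. Integrating against the spectral measure then yields
$$
\|\mathbf{1}_{-\eps^2\Delta_G > R}\, u^\eps(t)\|_{L^2(G)}^2
\leq R^{-s} \int_R^\infty \mu^s \, d\|E_{\mu/\eps^2} u^\eps(t)\|_{L^2(G)}^2
\leq R^{-s} \|(-\eps^2\Delta_G)^{s/2} u^\eps(t)\|_{L^2(G)}^2 \leq R^{-s} C^2.
$$
Taking the supremum over $t \in [-T,T]$ and then $\eps \to 0$, the right-hand side is bounded by $C^2 R^{-s}$, which tends to $0$ as $R \to +\infty$. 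This proves the first statement.

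For the second assertion, the same idea applies at low frequency. Under the additional bound on $(-\eps^2\Delta_G)^{-s/2} u^\eps(t)$, I would use that, for any $\mu < \delta$, $1 \leq (\delta/\mu)^s$, so
$$
\|\mathbf{1}_{-\eps^2\Delta_G < \delta}\, u^\eps(t)\|_{L^2(G)}^2
\leq \delta^s \int_0^\delta \mu^{-s} \, d\|E_{\mu/\eps^2} u^\eps(t)\|_{L^2(G)}^2
\leq \delta^s \|(-\eps^2\Delta_G)^{-s/2} u^\eps(t)\|_{L^2(G)}^2 \leq \delta^s C^2.
$$
Again the supremum over $t$ and $\eps$ gives the bound $C\delta^{s/2}$, which vanishes as $\delta \to 0$, and combined with the first part this proves the uniform strict $\eps$-oscillation.

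There is essentially no obstacle: the argument is a one-line functional calculus computation in each case, and the uniformity in both $\eps$ and $t$ follows immediately because the hypotheses are uniform in these parameters. The only subtlety worth noting is that $(-\eps^2\Delta_G)^{-s/2}$ is a priori unbounded near the bottom of the spectrum, so the second hypothesis implicitly asserts that $u^\eps(t)$ lies in its domain; the estimate above then makes rigorous sense via the spectral integral and shows that the low-frequency part of $u^\eps(t)$ is small.
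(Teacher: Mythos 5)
Your proof is correct and follows essentially the same route as the paper's: both reduce to the elementary spectral-calculus inequalities $\mathbf{1}_{\mu>R}\le (\mu/R)^{s}$ and $\mathbf{1}_{\mu<\delta}\le(\delta/\mu)^{s}$ applied to the self-adjoint operator $-\eps^2\Delta_G$, with the uniformity in $t$ and $\eps$ inherited directly from the hypothesis. The only superficial difference is that the paper phrases the argument via the Plancherel formula~\eqref{Plancherelformula} and smooth cutoffs $\chi$, $1-\chi$ (consistent with the notation of the preceding proposition), whereas you use the abstract spectral resolution of $-\Delta_G$ and the sharp indicators from Definition~\ref{def:epsosc}; your observation about the domain of $(-\eps^2\Delta_G)^{-s/2}$ (ensuring no spectral mass at $0$) is the correct way to make the low-frequency estimate rigorous.
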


\begin{proof}
We use Plancherel formula~\eqref{Plancherelformula} and the facts that for $s>0$, 
$$ \displaylines{
 \chi \left(-{\eps^2\over R} \Delta_G\right) \leq {(-\eps^2\Delta_G)^{s\over 2}\over R^s}  \chi \left(-{\eps^2\over R} \Delta_G\right)\leq {(-\eps^2\Delta_G)^{s\over 2}\over R^s}\cr
\mbox{and}\;\; 
 (1- \chi) \left(-{\eps^2\over \delta} \Delta_G\right) \leq \delta^{s} (-\eps^2\Delta_G)^{-{s\over 2}} (1-\chi) \left(-{\eps^2\over \delta} \Delta_G\right)\leq \delta^{s} (-\eps^2\Delta_G)^{-{s\over 2}} .\hfill \cr}$$
\end{proof}

For families of solutions to  the Schr\"odinger equation~(\ref{eq:schrosc}), the uniform $\eps$-oscillating property is inherited from the initial data. Indeed,  
 using that operators of the form $ \chi \left(-{\eps^2\over R} \Delta_G\right) $ 
 commute with the sublaplacian, 
the following result follows by energy estimates as a consequence of Proposition~\ref{prop:sobcri}:

\begin{proposition}
\label{prop:tbytepsosc}
Let $(\psi^\eps_0)_{\eps>0}$ be a bounded family of $L^\infty(\R,L^2(G))$.
If it satisfies
$$
\exists s,C>0,\qquad \forall \eps>0\qquad  
 \| (-\eps^2\Delta_G)^{s\over 2} \psi^\eps_0\|_{L^2(G)} 
\leq C,
$$
then the family of solutions $\psi^\eps(t)$ to the equation~(\ref{eq:schrosc}) for the initial data $(\psi^\eps_0)$ is uniformly  $\eps$-oscillating. Moreover, if $(\psi^\eps_0)$ satisfies~(\ref{Hscriterium}), then $(\psi^\eps(t))$ is uniformly strictly $\eps$-oscillating. 
  \end{proposition}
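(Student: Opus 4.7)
The plan is to reduce the statement to Proposition \ref{prop:sobcri} by transferring the Sobolev bounds assumed on the data $(\psi^\eps_0)$ to time-uniform bounds on the solution $(\psi^\eps(t))$. The key observation is that the semi-classical Schr\"odinger propagator $U^\eps(t):=e^{i \eps^{2-\tau}t\Delta_G/2}$ defined by Stone's theorem is a unitary operator on $L^2(G)$ that is built via functional calculus from the self-adjoint operator $\Delta_G$; hence it commutes with every Borel function of $\Delta_G$, and in particular with $(-\eps^2\Delta_G)^{s/2}$ and $(-\eps^2\Delta_G)^{-s/2}$ for any $s>0$.

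For the first part, I would simply note that, by this commutation property,
\[
(-\eps^2 \Delta_G)^{s/2}\psi^\eps(t)=U^\eps(t)\,(-\eps^2\Delta_G)^{s/2}\psi^\eps_0,
\]
so that the unitarity of $U^\eps(t)$ yields
\[
\sup_{t\in\R}\|(-\eps^2\Delta_G)^{s/2}\psi^\eps(t)\|_{L^2(G)}=\|(-\eps^2\Delta_G)^{s/2}\psi^\eps_0\|_{L^2(G)}\leq C,
\]
uniformly in $\eps>0$. The first point of Proposition \ref{prop:sobcri} then immediately gives that $(\psi^\eps(t))$ is uniformly $\eps$-oscillating.

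For the second part, if $(\psi^\eps_0)$ satisfies \eqref{Hscriterium}, then the same argument applied with both positive and negative exponents yields
\[
\sup_{t\in\R}\|(-\eps^2\Delta_G)^{s/2}\psi^\eps(t)\|_{L^2(G)}+\sup_{t\in\R}\|(-\eps^2\Delta_G)^{-s/2}\psi^\eps(t)\|_{L^2(G)}\leq C',
\]
uniformly in $\eps$, and the second point of Proposition \ref{prop:sobcri} gives the strict uniform $\eps$-oscillation. There is no real obstacle here: the only subtle point is to justify the commutation of $U^\eps(t)$ with the (possibly unbounded) operator $(-\eps^2\Delta_G)^{-s/2}$ on the appropriate domain, which follows from the fact that both are obtained by Borel functional calculus from the same essentially self-adjoint operator $\Delta_G$ on $C_c^\infty(G)$ (cf.\ Section \ref{freq}).
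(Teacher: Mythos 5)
Your argument is correct and is essentially the same as the paper's: the paper also invokes commutation of spectral functions of $\Delta_G$ with the sub-Laplacian (hence with the propagator) to carry the Sobolev bounds on $\psi^\eps_0$ to all times, and then concludes via Proposition \ref{prop:sobcri}. The only cosmetic difference is that you commute $(-\eps^2\Delta_G)^{\pm s/2}$ through the flow while the paper phrases it in terms of the cutoffs $\chi(-\eps^2\Delta_G/R)$ commuting with $\Delta_G$; both are the same functional-calculus fact.
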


From Propositions~\ref{prop:eops} (2) and~\ref{prop:tbytepsosc}, it follows that:

\begin{corollary}
\label{cor_tau>1}
We continue with the setting and the notation in  Proposition \ref{prop_theorem1}.
Assume in addition that  $(\psi^\eps_0)$ satisfies~(\ref{Hscriterium}).
If  $\tau>1$ then $\varrho_t^{\mathfrak v^*}=0$.
\end{corollary}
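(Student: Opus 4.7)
The plan is to combine three already established results, so the argument is short and modular. Since $(\psi^\eps_0)$ satisfies \eqref{Hscriterium}, Proposition \ref{prop:tbytepsosc} guarantees that the family of solutions $(\psi^\eps(t))_{\eps>0}$ is uniformly strictly $\eps$-oscillating in $L^\infty(\R,L^2(G))$. Applying Proposition \ref{prop:eops}(2) to this family then yields that the time-averaged semi-classical measure $t\mapsto \Gamma_t d\gamma_t$ does not charge the trivial representation, namely
$$
\gamma_t(G\times\{1_{\widehat G}\}) = 0 \quad \text{for almost every } t\in\R.
$$

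Next, I would translate this into a statement about the measure $\varsigma_t$ introduced in Theorem~\ref{theo:schro}(iii). Recalling that $1_{\widehat G}$ is identified with the parameter $(0,0)$ via $1_{\widehat G}=\pi^{(0,0)}$, and that $\mathcal H_{(0,0)}=\C$ so $\Gamma_t(x,(0,0))$ is a nonnegative scalar, the defining relation $d\varsigma_t(x,\omega)= \Gamma_t(x,(0,\omega))d\gamma_t(x,(0,\omega)){\bf 1}_{\lambda=0}$ gives
$$
\varsigma_t(G\times\{0\}) = 0 \quad \text{for almost every } t\in\R.
$$

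Finally, I would invoke Proposition~\ref{prop_theorem1}(3)(c), which is nothing but the content of Theorem~\ref{theo:schro}(iii)(3): for $\tau>1$ the measure $\varsigma_t$ is invariant under the flow $\Xi^s$ and hence supported in $G\times\{\omega=0\}$. Combined with the previous display, this forces $\varsigma_t=0$ for almost every $t$, and therefore
$$
\varrho_t^{\mathfrak v^*}(x) = \int_{\mathfrak v^*} \varsigma_t(x,d\omega) = 0.
$$

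There is no genuine obstacle here: the corollary is essentially a consistency check between the concentration of $\varsigma_t$ on the trivial representation (coming from the dispersive flow invariance when $\tau>1$) and the non-concentration at $1_{\widehat G}$ (coming from the negative-Sobolev bound in \eqref{Hscriterium} via strict $\eps$-oscillation). The only point worth spelling out carefully is the identification of the point $\omega=0\in\mathfrak v^*$ with the trivial representation $1_{\widehat G}\in\widehat G$, together with the fact that above $\lambda=0$ the operator-valued part of the semi-classical measure reduces to a scalar so that $\varsigma_t$ and $\gamma_t|_{\lambda=0}$ carry the same total mass.
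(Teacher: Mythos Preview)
Your proof is correct and follows essentially the same approach as the paper, which simply cites Propositions~\ref{prop:eops}(2) and~\ref{prop:tbytepsosc} and leaves the combination with Proposition~\ref{prop_theorem1}(3)(c) (equivalently Theorem~\ref{theo:schro}(iii)(3)) implicit. You have spelled out the identification $1_{\widehat G}=\pi^{(0,0)}$ and the scalar nature of $\Gamma_t$ above $\lambda=0$ more carefully than the paper does, but the logic is identical.
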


\subsection{Proof of Theorem \ref{theorem1}}
\label{sec:prooftheorem1}

Let $(\psi^\eps_0)_{\eps>0}$ be a bounded family of $L^\infty(\R,L^2(G))$ satisfying \eqref{Hscriterium}.
We set $\psi^\eps(t)= {\rm e}^{i {t\over 2\eps^{\aleph} } \Delta_G}\psi^\eps_0$. Then $\psi^\eps(t)$ satisfies the semi-classical Schr\"odinger equation~(\ref{eq:schrosc}) with $\tau=\aleph+2$,
and $(\psi^\eps_t)$ is uniformly strictly $\eps$-oscillating by Proposition \ref{prop:tbytepsosc}.

We consider a weak limit of $| {\rm e}^{i {t\over 2\eps^{\aleph} }\Delta_G}\psi^\eps_0(x)|^2 dx\,dt$ for a converging subsequence $(\eps_j)$.
 Up to another extraction of a subsequence, it admits a semi-classical measure $\Gamma_t d\gamma_t$ as in Theorem \ref{theo:schro}.
 By Proposition \ref{prop:eops}, 
 the marginals $\rho_t$ defined in Proposition \ref{prop_theorem1} coincide with the weak limit of $| {\rm e}^{i {t\over2 \eps^{\aleph} } \Delta_G}\psi^\eps_0(x)|^2 dx\,dt$. 
The result now readily follows from Proposition \ref{prop_theorem1} and Corollary \ref{cor_tau>1}.

\begin{remark}
 The case $\aleph \in (-2,-1]$ in Theorem \ref{theorem1} holds under 	the weaker hypothesis 
$$
\exists s,C>0,\;\;   \| (-\eps^2\Delta_G)^{s\over 2} \psi^\eps_0\|_{L^2(G)}\leq C.
$$
\end{remark}

\appendix
\section{Dispersion in the Euclidean case}

We describe here the analogue of Theorem~\ref{theorem1} in the Euclidean setting for the Laplace operator $\Delta= \sum_{1\leq j\leq d} \partial_{x_j}^2$.  The assumption~(\ref{Hscriterium}) then writes in the same manner replacing the sub-Laplacian by  the Laplace operator $\Delta$.  We point out that the result below is only an elementary version of results that hold in more general setting and for more general Hamiltonian, including integrable systems (see~\cite{AFM,CFM}). We use the semi-classical measures as introduced in the 90's in~\cite{gerard_X,gerardleichtnam,GMMP,LionsPaul}.  

\begin{lemma}
Let $(\psi^\eps_0)$ be a bounded family in $L^2(\R^d)$ satisfying~(\ref{Hscriterium}). Then any limit point of the measure $\left|{\rm e}^{-i{t\over 2\eps^\aleph} \Delta}\psi^\eps_0\right| ^2 dxdt$  is of the form $\varrho_t(x) dt$ where $\varrho_t$ is a measure on $\R^d$.  Besides
\begin{enumerate}
\item If $\aleph\in(-2,-1)$, then $\partial_t \varrho_t =0$.
\item If $\aleph=-1$ then $\varrho_t(x)=\int_{\R^d} \mu_0(x-t\xi,d\xi)$.
\item If $\aleph >-1$ then $\varrho_t=0$.
\end{enumerate} 
\end{lemma}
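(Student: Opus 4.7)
The plan is to mimic the strategy of Theorems~\ref{theo:mesures} and~\ref{theo:schro} in the Euclidean Weyl pseudodifferential calculus, which here is simpler because the phase space is just $\R^d \times \R^d$ and there is no non-commutative operator part. Rewriting $\frac{t}{\eps^\aleph}\Delta = \frac{t}{\eps^{\aleph+2}}\eps^2 \Delta$, the family $u^\eps(t) = e^{-i t/(2\eps^\aleph)\Delta}\psi_0^\eps$ satisfies the semi-classical Schr\"odinger equation $i\eps^\tau \partial_t u^\eps = -\tfrac{\eps^2}2 \Delta u^\eps$ with $\tau = \aleph+2 \in (0,+\infty)$. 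After extraction, one associates to $(u^\eps)$ a time-averaged semi-classical measure $t\mapsto \mu_t \in L^\infty(\R,\mathcal M^+(\R^d\times \R^d))$ in the sense that
$$
\int_\R \theta(t)\bigl(\mathrm{Op}_\eps(a) u^\eps(t),u^\eps(t)\bigr)_{L^2}\,dt \Tend{\eps}{0}
\int_\R\theta(t)\int_{\R^d\times \R^d} a(x,\xi)\,d\mu_t(x,\xi)\,dt,
$$
for $\theta \in L^1(\R)$ and $a\in C_c^\infty(\R^d \times \R^d)$.

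Next I would derive the analogue of \eqref{eq_epstauIeps}. Since the Weyl symbol of $-\tfrac12 \eps^2\Delta$ is $\tfrac{|\xi|^2}2$, the semi-classical commutator expansion yields $[\mathrm{Op}_\eps(a),-\tfrac{\eps^2}2\Delta] = \tfrac{\eps}{i}\mathrm{Op}_\eps(\xi\cdot \nabla_x a) + O(\eps^3)$. Differentiating $(\mathrm{Op}_\eps(a)u^\eps,u^\eps)$ in $t$, multiplying by $\theta$, and integrating by parts gives
$$
-i\eps^\tau \int_\R \theta'(t)(\mathrm{Op}_\eps(a)u^\eps,u^\eps)\,dt = \tfrac{\eps}i\int_\R \theta(t)(\mathrm{Op}_\eps(\xi\cdot\nabla_x a)u^\eps,u^\eps)\,dt + O(\eps^3).
$$
Passing to the limit along the extracting subsequence, one reads the three regimes off: dividing by $\eps^\tau$ and sending $\eps\to 0$ gives $\partial_t\mu_t = 0$ when $\tau<1$; dividing by $\eps$ gives $\xi\cdot \nabla_x \mu_t = 0$ when $\tau>1$; and in the balanced case $\tau=1$ one obtains the transport equation $\partial_t \mu_t + \xi\cdot \nabla_x \mu_t = 0$, whose distributional solution is $\mu_t(x,\xi) = \mu_0(x-t\xi,\xi)$.

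It then remains to relate $\mu_t$ to the limit of the energy density. The Euclidean analogue of Proposition~\ref{prop:eops} applies: the upper bound in \eqref{Hscriterium} makes $(u^\eps(t))$ uniformly $\eps$-oscillating, so any weak limit of $|u^\eps(t,x)|^2 dx\,dt$ is absolutely continuous in $t$ and equals $\varrho_t(x)\,dt$ with $\varrho_t(x) = \int_{\R^d}\mu_t(x,d\xi)$; and the lower bound in \eqref{Hscriterium} makes the family strictly $\eps$-oscillating, which rules out any concentration of $\mu_t$ on $\{\xi=0\}$. Integrating the three cases over $\xi$ yields the three conclusions: $\partial_t\varrho_t = 0$ for $\aleph \in (-2,-1)$; $\varrho_t(x) = \int_{\R^d}\mu_0(x-t\xi,d\xi)$ for $\aleph=-1$; and for $\aleph>-1$, the invariance $\xi\cdot \nabla_x\mu_t = 0$ combined with the absence of mass at $\xi=0$ forces $\mu_t=0$ (since any compact subset of $\R^d\times (\R^d\setminus\{0\})$ is mapped to infinity by the flow of $\xi$, as in the argument after Theorem~\ref{theo:schro}), hence $\varrho_t=0$.

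The main obstacle is the last point: ensuring that the $\mathrm{Op}_\eps$-calculus and the extraction of $\mu_t$ are robust enough to accommodate symbols that are not compactly supported in $\xi$ (needed to capture the full mass of the density), which is exactly what the strict $\eps$-oscillation hypothesis is designed for. The rest is a routine commutator computation combined with the standard passage-to-the-limit arguments already used in the body of the paper.
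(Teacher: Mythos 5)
Your proposal follows essentially the same approach as the paper's Appendix A proof: rewrite the scaling so the equation becomes a semi-classical Schr\"odinger equation with $\tau=\aleph+2$, extract a time-averaged semi-classical measure $\mu_t$, compute the commutator $[\mathrm{Op}_\eps(a),-\tfrac{\eps^2}2\Delta]$, read the three regimes off the balance between $\eps^\tau$ and $\eps$, and pass to the density marginal via the (strict) $\eps$-oscillation hypothesis. The only cosmetic differences are that the paper's case (1) is argued pointwise in $t$ rather than time-averaged, and that it shows the $\tau>1$ invariance by differentiating along the flow parameter $s$, but both routes are equivalent and your version is correct.
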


\begin{proof}
A simple proof of this fact can be given by use of semi-classical measures. Denoting by~${\rm Op}_\eps(a)$ the semi-classical pseudodifferential operator of symbol $a$, a semi-classical measure~$\mu_t$  of the family $\psi^\eps(t,x):={\rm e}^{-i{t\over 2\eps^\aleph} \Delta}\psi^\eps_0$ is such that for a subsequence $\eps_k$,  for all $\theta\in{\mathcal C}_c^\infty(\R)$ and $a\in{\mathcal C}_c^\infty(\R^{2d})$,
\begin{equation}\label{eq:annex1}
\left({\rm Op}_{\eps_k}(a) \psi^{\eps_k}(t)\;,\;\psi^{\eps_k}(t) \right) \Tend{k}{+\infty} \int_{\R\times \R^{2d}} \theta(t) a(x,\xi) \mu_t(dx,d\xi) dt.
\end{equation}
The fact that $(\psi^\eps_0)$  satisfies~(\ref{Hscriterium}) implies that it  is a strictly $\eps$-oscillating family and that it is also the case for  the family $\psi^\eps(t)$. One then has  $\mu_t(\{\xi=0\}=0$ and 
 for the subsequence $\eps_k$ of~\eqref{eq:annex1}, 
$\theta\in{\mathcal C}_c^\infty(\R)$ and $\phi\in{\mathcal C}_c^\infty(\R^d)$,
$$\int_{\R\times\R^d} \theta(t) \phi(x) |\psi^{\eps_k}t,x)|^2 dxdt \Tend{k}{+\infty} \int_{\R \times \R^{2d}} \theta(t) \phi(x) \mu_t (dx,d\xi)dt.$$
The knowledge of the semi-classical measures determines all  the limit points of the energy density.

\medskip 

Let us now take
 $a\in{\mathcal C}_c^\infty(\R^{2d})$, we observe that 
$$
{d\over dt} \left({\rm Op}_\eps(a) \psi^\eps(t)\;,\;\psi^\eps(t) \right) = {1\over i\eps^\kappa} \left(\left[ {\rm Op}_\eps(a)\;,\; -{\eps^2\over 2} \Delta\right] \psi^\eps(t) \;,\;\psi^\eps(t)\right).
$$
Since 
$$
\left[ {\rm Op}_\eps(a)\;,\; -{\eps^2\over 2} \Delta\right]=i\eps \, {\rm Op}_\eps(\xi\cdot \nabla a) + \eps^2 {\rm Op}_\eps(\Delta a).$$
We obtain immediately the following description:
\begin{enumerate} 
\item For $\kappa\in(0,1)$, 
$$
\left({\rm Op}_\eps(a) \psi^\eps(t)\;,\;\psi^\eps(t) \right)  = \left({\rm Op}_\eps(a) \psi^\eps_0\;,\;\psi^\eps_0 \right) + O(\eps^{1-\kappa}),
$$
whence $\mu_t(x,\xi)=\mu_0(x,\xi)$ for all times $t\in\R$.
\item For $\kappa=1$,  
the map $t\mapsto \mu_t$ is weakly continuous form $\R$ to $\mathcal M^+(\R^{2n})$ and can be realised by the same subsequence $\eps_k$ for all $t\in[0,T]$, $T>0$ with
$$
\partial_t \mu_t(x,\xi) = \xi\cdot \nabla_x \mu_t(x,\xi)
\qquad \mbox{in the sense of distributions}.
$$
\item For $\kappa>1$, we observe that
\begin{eqnarray*}
\left.{d\over ds} \left({\rm Op}_\eps(a(x+s\xi,\xi)) \psi^\eps(t)\;,\;\psi^\eps(t) \right) \right|_{s=0}& =& \left({\rm Op}_\eps(\xi\cdot a)\psi^\eps(t)\;,\;\psi^\eps(t) \right)\\
& = &\eps^{\kappa-1} {d\over dt}  \left({\rm Op}_\eps(a) \psi^\eps(t)\;,\;\psi^\eps(t) \right) + O(\eps).
\end{eqnarray*}
\end{enumerate}
In the last case, we deduce that 
 for $\theta\in{\mathcal C}_c^\infty(\R)$,
$$
\displaylines{\qquad  \int\theta(t) \left.{d\over ds} \left({\rm Op}_\eps(a(x+s\xi,\xi)) \psi^\eps(t)\;,\;\psi^\eps(t) \right) \right|_{s=0}dt\hfill\cr\hfill 
 = -i\eps^{\kappa-1} \int\theta'(t) 
  \left({\rm Op}_\eps(a) \psi^\eps(t)\;,\;\psi^\eps(t) \right)dt = O(\eps^{\kappa-1}).\qquad\cr}
  $$
  Therefore, the measure $\mu_t$ is invariant under the flow
  $(x,\xi) \mapsto (x+s\xi,\xi)$
  and,  since $\mu_t$ is of finite mass, $\mu_t $ is supported on $\{\xi=0\}$, whence $\mu_t=0$ by the strict $\eps$-oscillating assumption
\end{proof}

\section{Proof of Lemma~\ref{lem_sigma_comH} and Hermite functions}
\label{sec_Hermite+pflem_sigma_comH}

The proof of Lemma \ref{lem_sigma_comH} uses the bracket structure of $\mathfrak g$ 
via the two following lemmata:

\begin{lemma}
 \label{lem_comDeltaPQ}
For any	$\lambda\in \mathfrak z^* \setminus\{0\}$ and $j=1,\ldots, d$, we have:
$$ 
[\Delta_G,P_j]=-2|\lambda|^{-1}{\mathcal Z}^{(\lambda)} Q_j
 \quad\mbox{and}\quad	
 [\Delta_G,Q_j]=2|\lambda|^{-1} {\mathcal Z}^{(\lambda)} P_j.
$$
 \end{lemma}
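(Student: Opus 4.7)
The plan is to compute each commutator directly using the decomposition $\Delta_G = \sum_{k=1}^d (P_k^2 + Q_k^2)$ given by \eqref{eq_DeltaGPQ}, combined with the Leibniz identity $[AB,C] = A[B,C]+[A,C]B$ and the bracket relations \eqref{eq_Zlambda}–\eqref{eq_comPQ} for the orthonormal basis attached to $\lambda$.

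For the first identity, I would expand
\[
[\Delta_G,P_j] = \sum_{k=1}^d [P_k^2,P_j] + \sum_{k=1}^d [Q_k^2,P_j].
\]
The first sum vanishes since all the $P_k$ commute by \eqref{eq_comPQ}. In the second sum, every term with $k\neq j$ vanishes by the same relation, and only $k=j$ contributes. Using the Leibniz identity,
\[
[Q_j^2,P_j] = Q_j[Q_j,P_j]+[Q_j,P_j]Q_j = -|\lambda|^{-1}\bigl(Q_j\,\mathcal Z^{(\lambda)}+\mathcal Z^{(\lambda)}Q_j\bigr),
\]
by \eqref{eq_Zlambda}. The key observation (and the only point to be careful about) is that $\mathcal Z^{(\lambda)}$ belongs to the center $\mathfrak z$, hence commutes with every left-invariant vector field, in particular with $Q_j$; this yields
\[
[\Delta_G,P_j] = -2|\lambda|^{-1}\mathcal Z^{(\lambda)}Q_j.
\]

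The second identity is proved in exactly the same way, with the roles of $P_j$ and $Q_j$ swapped. Now $[Q_k^2,Q_j]=0$ for all $k$, while $[P_k^2,Q_j]=0$ for $k\neq j$, and
\[
[P_j^2,Q_j]=P_j[P_j,Q_j]+[P_j,Q_j]P_j = |\lambda|^{-1}\bigl(P_j\,\mathcal Z^{(\lambda)}+\mathcal Z^{(\lambda)}P_j\bigr) = 2|\lambda|^{-1}\mathcal Z^{(\lambda)}P_j,
\]
again using centrality of $\mathcal Z^{(\lambda)}$. Since the entire argument is a direct application of the bracket relations of the $H$-type Lie algebra, there is no genuine obstacle; the only potential source of confusion is the sign conventions (the identity $[Q_j,P_j]=-[P_j,Q_j]$) and remembering that $\mathcal Z^{(\lambda)}$ may be freely moved across $P_j$ and $Q_j$ because it is central.
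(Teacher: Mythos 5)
Your proof is correct and follows exactly the same route as the paper: decompose $\Delta_G$ via \eqref{eq_DeltaGPQ}, kill all cross terms using \eqref{eq_comPQ}, then expand the surviving $[Q_j^2,P_j]$ (resp.\ $[P_j^2,Q_j]$) by Leibniz and apply \eqref{eq_Zlambda} together with the centrality of $\mathcal Z^{(\lambda)}$. Your write-up is slightly more explicit than the paper's (which compresses the last step into ``using now \eqref{eq_Zlambda}''), but it is the same argument.
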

  
\begin{proof}[Proof of Lemma \ref{lem_comDeltaPQ}]
By \eqref{eq_comPQ}, $P_{j_0}$ commutes with $Q_j$ if $j\not =j_0$ and with any $P_j$,
so \eqref{eq_DeltaGPQ}	yields
$$
[\Delta_G,P_{j_0}] = 
 \sum_{j = 1}^{d} \left([P_j^2,P_{j_0}] + [Q_j^2,P_{j_0}]\right)
 = 
 [Q_{j_0}^2,P_{j_0}]
 =
 Q_{j_0}[Q_{j_0},P_{j_0}]
 +[Q_{j_0},P_{j_0}]Q_{j_0}.
$$ 
Using now \eqref{eq_Zlambda}, we obtain the first equality of the statement. The second equality is proved in a similar way.	
\end{proof}

\begin{lemma}
\label{lem_compT}
	Considering for $\lambda\in \mathfrak z^*\setminus\{0\}$,
$$
T:= 
\left(\sum_{j_1=1}^{2d}
V_{j_1} \pi^\lambda (V_{j_1})\right)
\left(\sum_{j_2=1}^d
\left(P_{j_2} \pi^\lambda (Q_{j_2}) - Q_{j_2} \pi^\lambda(P_{j_2})\right)\right),
$$
we have:
\begin{equation*}\label{eq:PinTPin}
\Pi_n^{(\lambda)}  T \Pi_n^{(\lambda)}  = 
\frac{|\lambda|}2 \left( |\lambda|^{-1} {\mathcal Z}^{(\lambda)} (2n+d)
 +i \Delta_G\right) \Pi_n^{(\lambda)} .
 \end{equation*}
 \end{lemma}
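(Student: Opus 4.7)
\section*{Plan of proof for Lemma~\ref{lem_compT}}

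The plan is to rewrite both factors of $T$ in terms of creation/annihilation operators, diagonalise with respect to the Hermite grading, and then reduce everything to a single bracket computation that involves only $[a_j^*,a_k]=-\delta_{jk}$ and $[P_j,Q_k]=\delta_{jk}|\lambda|^{-1}\mathcal{Z}^{(\lambda)}$.

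First I would exploit that the sum $\sum_{j_1=1}^{2d}V_{j_1}\otimes\pi^\lambda(V_{j_1})$ is independent of the orthonormal basis of $\mathfrak v$, so I can use the basis $(P_1,\dots,P_d,Q_1,\dots,Q_d)$. Writing $\pi^\lambda(P_j)=\sqrt{|\lambda|/2}(a_j-a_j^*)$ and $\pi^\lambda(Q_j)=i\sqrt{|\lambda|/2}(a_j+a_j^*)$, I would introduce the complex combinations $X_j^{\pm}:=P_j\pm iQ_j$ (acting on $G$) and split
\[
L:=\sum_{j}\bigl(P_j\pi^\lambda(P_j)+Q_j\pi^\lambda(Q_j)\bigr)=L_-+L_+,\qquad L_-=\sqrt{\tfrac{|\lambda|}{2}}\sum_{j}X_j^{+}\,a_j,\quad L_+=-\sqrt{\tfrac{|\lambda|}{2}}\sum_{j}X_j^{-}\,a_j^*,
\]
and similarly $R:=\sum_{j}(P_j\pi^\lambda(Q_j)-Q_j\pi^\lambda(P_j))=R_-+R_+$. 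A direct calculation, which is the first small step, shows that $R_+=-iL_+$ and $R_-=iL_-$, i.e.\ $R=i(L_--L_+)$.

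Next I would use the Hermite grading: $a_j$ decreases the harmonic-oscillator level by one and $a_j^*$ increases it by one, while $L_\pm,R_\pm$ respect this grading. Consequently $\Pi_n L_+ R_+\Pi_n$ and $\Pi_n L_- R_-\Pi_n$ vanish, so
\[
\Pi_n T\Pi_n=\Pi_n\bigl(L_+R_-+L_-R_+\bigr)\Pi_n=i\,\Pi_n\bigl(L_+L_-- L_-L_+\bigr)\Pi_n=i\,\Pi_n[L_+,L_-]\Pi_n.
\]
Everything then hinges on computing $[L_+,L_-]$. Since $P_j, Q_j$ commute with every $\pi^\lambda(\cdot)$, I can expand
\[
[X_j^{-}a_j^*,\,X_k^{+}a_k]=[X_j^{-},X_k^{+}]\,a_j^*a_k+X_k^{+}X_j^{-}[a_j^*,a_k].
\]
With $[X_j^{-},X_k^{+}]=2i\delta_{jk}|\lambda|^{-1}\mathcal{Z}^{(\lambda)}$ (from \eqref{eq_Zlambda}--\eqref{eq_comPQ}), $[a_j^*,a_k]=-\delta_{jk}$ and $X_k^{+}X_k^{-}=P_k^2+Q_k^2-i|\lambda|^{-1}\mathcal{Z}^{(\lambda)}$, summation in $j,k$ collapses to
\[
[L_+,L_-]=-\tfrac{|\lambda|}{2}\Bigl(2i|\lambda|^{-1}\mathcal{Z}^{(\lambda)}\,N-\Delta_G+id|\lambda|^{-1}\mathcal{Z}^{(\lambda)}\Bigr)=-\tfrac{i}{2}\mathcal{Z}^{(\lambda)}(2N+d)+\tfrac{|\lambda|}{2}\Delta_G,
\]
where $N=\sum_j a_j^*a_j$ is the number operator, which acts as $n$ on the range of $\Pi_n$. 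Substituting back and multiplying by $i$ yields exactly the claimed identity, noting that $\Delta_G$ and $\Pi_n^{(\lambda)}$ act on the independent variables $x$ and $\xi$ respectively and therefore commute.

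The only conceptual obstacle is keeping track of the two layers of non-commutativity (the one in $\mathfrak g$ via $[P_j,Q_j]=|\lambda|^{-1}\mathcal{Z}^{(\lambda)}$ and the one on $L^2(\mathfrak p_\lambda)$ via $[a_j,a_k^*]=\delta_{jk}$), and in particular not to drop either the contribution $2i|\lambda|^{-1}\mathcal{Z}^{(\lambda)}N$ coming from the first nor the term $\Delta_G$ coming from the second: the magic of the identity is precisely that these two contributions, though of different origin, combine into the right-hand side of the statement once the Hermite projectors collapse $N$ to the scalar $n$.
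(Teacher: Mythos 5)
Your proof is correct, and it follows essentially the same strategy as the paper's: switch to the $(P_j,Q_j)$ basis (using the basis-independence of $\sum_j V_j\pi^\lambda(V_j)$), rewrite the two factors of $T$ in terms of raising/lowering operators, drop the level-shifting pieces via the Hermite grading, and compute the surviving level-preserving part. The one genuine refinement is the final step: you package the level-preserving contribution as $i\,\Pi_n[L_+,L_-]\Pi_n$ and evaluate $[L_+,L_-]$ purely from the two abstract commutators $[X_j^-,X_k^+]=2i\delta_{jk}|\lambda|^{-1}\mathcal{Z}^{(\lambda)}$ and $[a_j^*,a_k]=-\delta_{jk}$, using only that the number operator $N$ reduces to the scalar $n$ on $\mathcal{V}_n$. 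The paper instead expands $\Pi_n=\sum_{|\alpha|=n}\Pi_\alpha$ and computes $\alpha_j R_j\bar R_j-(\alpha_j+1)\bar R_jR_j$ on each Hermite function $h_\alpha$, which requires the explicit ladder action and a silent cancellation of the off-diagonal $(j_1,j_2)$ cross-terms. Your commutator packaging makes that cancellation automatic (via $\delta_{jk}$) and avoids ever touching the individual Hermite functions, and it also isolates cleanly the two sources of the answer — the group bracket producing $\mathcal{Z}^{(\lambda)}(2N+d)$ and the Weyl bracket producing $\Delta_G$ — which is a nice conceptual bonus but not a change of method.
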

 
The proof of \ref{lem_compT} will use the properties of a special family of $H(\lambda)$-eigenfunctions we now recall.
 The family of Hermite functions $(h_n)_{n\in\N}$ given by 
 $$
 h_n(\xi) = \frac{(-1)^n}{\sqrt{2^n n! \sqrt \pi}}
 e^{\frac {\xi^2}2}
 \frac{d }{d\xi} ( e^{\xi^2}), \quad n\in \N, 
 $$
 is an orthonormal basis of~$L^2(\R) $ which satisfies 
$$
-h''_n(\xi)+\xi^2 h_n(\xi)= (2n+1) h_n(\xi) \, .
$$  
Hence, for each multi-index~$\alpha \in {\mathbb N}^d$, 
the function $h_{\alpha}$ 
defined  by
$$
 h_{\alpha} (\xi)  :=\prod_{j=1}^d h_{\alpha_j}(\xi_j),  \;\;\xi = (\xi_1,\dots, \xi_d) \in \R^d,
$$
is an eigenfunction of the operator $H(\lambda)$ (see Section \ref{freq}):
$$
H(\lambda) h_{\alpha} =    |\lambda| (2|\alpha|+d) 
\  h_{\alpha}.
$$
The eigenvalues $|\lambda| (2|\alpha|+d)$, $\alpha\in \N^d$, describe the entire spectrum of $H(\lambda)$
since the functions $h_\alpha$, $\alpha\in \N^d$ form an orthonormal basis of $L^2(\R^d)$.

For each $\lambda\in \mathfrak z^*\setminus\{0\}$, 
the symplectic structure on $\mathfrak v$ given by $B(\lambda)$ naturally suggests to consider a complex structure by setting: 
$$
R_j := \frac12 (P_j - iQ_j), 
\quad\mbox{and}\quad 
\bar R_j := \frac12 (P_j + iQ_j).
$$
By \eqref{eq_piPQZ}, the operators 
$$
\pi^\lambda (R_j)=\frac{\sqrt{|\lambda|}}2 (\partial_{\xi_j} +\xi_j)
\;\;\mbox{and}\;\;
\pi^\lambda (\bar R_j)=\frac{\sqrt{|\lambda|}}2 (\partial_{\xi_j} -\xi_j) 
$$
are the creation-anihilation operators  associated with the harmonic oscillator $H(\lambda)$.
The well known recursive relations of the Hermite functions 
$$
 h'_n(\xi) = \sqrt{\frac n 2} h_{n-1}(\xi) - \sqrt{\frac{n+1}2} h_{n+1}(\xi),
\qquad
\xi h_n (\xi)= \sqrt{\frac n 2} h_{n-1}(\xi) + \sqrt{\frac{n+1}2} h_{n+1}(\xi),
$$
	gives 
for each $\lambda\in {\mathfrak z}^*\setminus\{0\} $, 
$n\in \N$ and $j=1,\ldots,d$,
 $$
\pi^\lambda (R_j) h_\alpha
= \frac{\sqrt{|\lambda|}}2 
\sqrt{2\alpha_j} h_{\alpha-{\bf 1}_j}
\qquad
\pi^\lambda(\bar R_j) h_\alpha
= -\frac{\sqrt{|\lambda|}}2 
\sqrt{2(\alpha_j+1)} h_{\alpha+{\bf 1}_j}.
$$
Consequently, we have
 $
 \pi^\lambda(\bar R_j)({\mathcal V}_n)={\mathcal V}_{n+1}$
and
$ \pi^\lambda(R_j)({\mathcal V}_n)={\mathcal V}_{n-1}
 $
with the convention that ${\mathcal V}_{-1}=\{0\}$. Moreover 
    $$
  \pi^\lambda(R_j) \pi^\lambda(\bar R_j) h_\alpha
  = 
  -\frac{|\lambda|}2 (\alpha_j+1) h_\alpha
  \qquad
 \pi^\lambda(\bar R_j) \pi^\lambda(R_j) h_\alpha
 =
  -\frac{|\lambda|}2 \alpha_j h_\alpha
 $$

\begin{proof}[Proof of Lemma \ref{lem_compT}]
We observe that
\begin{equation}\label{eq:Annex2}
\sum_{j=1}^{2d} V_j \pi^\lambda(V_j)= \sum_{j=1}^d(P_j\pi^\lambda(P_j)+Q_j \pi^\lambda(Q_j) ) = 
\sum_{j=1}^d (R_j\pi^\lambda(\bar R_j)+\bar R_j \pi^\lambda(R_j)),
\end{equation}
whence
$$
T=
-4i 
\sum_{ j_1,j_2 }
\left( R_{j_1}  \pi^\lambda(\bar R_{j_1}) + \bar R_{j_1} \cdot \pi^\lambda(R_{j_1})\right) \left(R_{j_2} \pi^\lambda (\bar R_{j_2}) - \bar R_{j_2} \pi^\lambda(R_{j_2})\right).
$$
Hence the properties of the $R_j$'s given above yield 
\begin{align*}
\Pi_n T \Pi_n
&=
-4i\sum_{j=1}^d
\Pi_n \left(-R_j\bar R_j \pi^\lambda(\bar R_j)\pi^\lambda (R_j) 
 + \bar R_j R_j \pi^\lambda( R_j)\pi^\lambda (\bar R_j)
 \right)\Pi_n
\\
\nonumber
&=
-4i	\sum_{j=1}^d  \sum_{|\alpha|=n}
\Pi_n \left(\frac{|\lambda|}2 \alpha_j  R_j\bar R_j    
  -\frac{|\lambda|}2 (\alpha_j+1) \bar R_j R_j 
 \right) \Pi_\alpha
\end{align*}
 since $\Pi_n =\sum_{|\alpha|=n} \Pi_\alpha$ where $\Pi_\alpha= |h_\alpha \rangle \, \langle h_\alpha|$.
We compute 
$$
\sum_{j=1}^d \left(\alpha_j  R_j\bar R_j    
 - (\alpha_j+1) \bar R_j R_j 
 \right) 
= 
\frac i4 |\lambda|^{-1} {\mathcal Z}^{(\lambda)} (2|\alpha|+d)
 -  \frac 14 \Delta_G ,
$$
and the result follows.
\end{proof}

We can now show Lemma \ref{lem_sigma_comH}.

\begin{proof}[Proof of Lemma \ref{lem_sigma_comH}]
By Lemma \ref{lem_g(lambda)} Part (1), $\sigma\in {\mathcal A}_0$.
 To prove Part (1), we first observe that,
 since the endomorphism $B(\lambda)$ defined via \eqref{skw}
is represented by $J$ in the $(P_1,\ldots,P_d,Q_1,\ldots,Q_d)$-basis,
 we have in vector notation 
\begin{align*}
\sum_{j=1}^d \left(P_j \pi^\lambda (Q_j) - Q_j \pi^\lambda(P_j)\right)
&=
\left(\begin{array}{c} P\\Q\end{array}\right)^t 
J \left(\begin{array}{c} \pi(P)\\\pi(Q)\end{array}\right)
=
|\lambda|^{-1} V^t \ B(\lambda) \pi(V)
\\&=
|\lambda|^{-1} \sum_{j,k} B(\lambda)_{j,k} V_j \pi(V_k).
\end{align*} 
Hence we can write 
 $$
\sigma_1(x,\lambda)
=\frac{-1} {2i|\lambda|^{2}}\sum_{j,k=1}^{2d}
 B(\lambda)_{j,k} \pi^\lambda(V_k) 
V_j \sigma(x,\pi)
=\sum_{j,k=1}^{2d}
\pi^\lambda(V_k) V_j g_{j,k}(\lambda)\sigma_0(x,\pi),  
$$
As $g$ is smooth and supported away from 0
and $B(\lambda)$ depends linearly on $\lambda\in {\mathfrak z}^*\setminus\{0\}$,
each function $g_{j,k}:=B_{j,k} g$ is smooth on ${\mathfrak z}^*\setminus\{0\}$; 
it is also Schwartz as $g$ is Schwartz.
 By Lemma \ref{lem_g(lambda)} Part (1), each symbol $g_{j,k} (\lambda) \sigma_0$ is  in ${\mathcal A}_0$  
 so $\sigma_1\in {\mathcal A}_0$.
 This shows Part (1).
 
 \medskip 
 
 Part (2) follows from the  observation that, 
 as $\sigma$ commutes with $H(\lambda)$, 
 we have:
 $$
\left[\sigma_1(x,\lambda), H(\lambda)\right]
=\frac{-1}{2i|\lambda|} 
\left(\sum_{j=1}^d P_j \left[\pi^\lambda (Q_j),H(\lambda)\right] - Q_j \left[\pi^\lambda(P_j),H(\lambda)\right]\right)\sigma(x,\lambda).$$
We then use Lemma \ref{lem_comDeltaPQ} and write 
$$
P_j \left[\pi^\lambda (Q_j),H(\lambda)\right] - Q_j \left[\pi^\lambda(P_j),H(\lambda)\right]
= -2|\lambda|^{-1} \pi^\lambda({\mathcal Z}^{(\lambda)}) ( P_j\pi^\lambda (P_j)+Q_j\pi^\lambda (Q_j)),
$$ 
which allows us to conclude in view of \eqref{eq_piPQZ} and \eqref{eq:Annex2}.

 \medskip 
 
  Part (3) follows from 
the commutativity of $\sigma$ with $H$ 
and Lemma \ref{lem_compT}.	
\end{proof}


\end{document}